\theoremstyle{plain}
\newtheorem{theorem}{Theorem}
\newtheorem*{theorem 1}{Theorem 1}
\newtheorem*{theorem 2}{Theorem 2}
\newtheorem*{theorem 3}{Theorem 3}
\newtheorem*{theorem 4(a)}{Theorem 4(a)}
\newtheorem*{theorem 4(b)}{Theorem 4(b)}
\numberwithin{equation}{section}
\newtheorem{proposition}[equation]{Proposition}
\newtheorem{lemma}[equation]{Lemma}
\newtheorem{corollary}[equation]{Corollary}
\theoremstyle{definition}
\newtheorem{remark}[equation]{Remark}
\newtheorem{example}[equation]{Example}
\newtheorem{definition}[equation]{Definition}
\newcommand {\printname}[1] {}
\def    \R  {{\mathbb R}}
\def    \Z  {{\mathbb Z}}
\def    \C  {{\mathbb C}}
\def     \grad {{\operatorname{grad}}}
\def    \im{{\operatorname{im}}}
\def    \lcm{{\operatorname{lcm}}}
\begin{document}
\title[The fundamental groups of presymplectic Hamiltonian $G$-manifolds]{The fundamental groups of presymplectic Hamiltonian $G$-manifolds}

\author{Hui Li} 
\address{School of mathematical Sciences\\
        Soochow University\\
        Suzhou, 215006, China.}
        \email{hui.li@suda.edu.cn}

\thanks{2010 classification. Primary :  53D05, 53D10, 53D20; Secondary :
55Q05}
\keywords{presymplectic manifold,  Hamiltonian $G$-action, moment map, presymplectic quotient, fundamental group}
\begin{abstract}
We consider presymplectic manifolds equipped with Hamiltonian $G$-actions, $G$ being a connected compact Lie group.
A presymplectic manifold is foliated by the integral submanifolds of 
the kernel of the presymplectic form.  For a presymplectic Hamiltonian $G$-manifold, Lin and Sjamaar propose a condition under which they show that the moment map image has the same ``convex and polyhedral"  property as the moment map image of a symplectic Hamiltonian $G$-manifold, a result proved independently by Atiyah, Guillemin-Sternberg, and Kirwan. In this paper, under the condition Lin and Sjamaar proposed on presymplectic Hamiltonian $G$-manifolds, we study the fundamental groups of such manifolds, comparing with
earlier results on the fundamental groups of symplectic Hamiltonian $G$-manifolds.
 We observe that the results on the symplectic case are special cases of the results on the presymplectic case.
\end{abstract}

 \maketitle

\section{introduction}
Let $M$ be a smooth manifold, $\omega$ be a closed $2$-form on $M$ with constant rank. If $\ker \omega = 0$, then $(M, \omega)$ is a symplectic manifold, otherwise,  $(M, \omega)$ is called a {\bf presymplectic manifold}, and $\omega$ is called a presymplectic form. Symplectic and contact manifolds are special cases of presymplectic manifolds.
Let $G$ be a connected compact Lie group acting on $(M, \omega)$ preserving $\omega$. If  $(M, \omega)$ is symplectic and the $G$-action
is Hamiltonian with a {\it proper} moment map $\phi$, by Atiyah, Guillemin-Sternberg, and Kirwan's theorems,  $\phi(M)\cap \mathfrak t^*_+$
is a closed convex polyhedral set (\cite{A, GS0, K}), where $\mathfrak t^*_+\subset \mathfrak t^*$ is a closed positive Weyl chamber, $\mathfrak t^*$
being the dual Lie algebra of a maximal torus of $G$. 
If $(M, \omega)$ is a presymplectic $G$-manifold, we can similarly define Hamiltonian $G$-actions and moment maps (see Sec. 2). If $(M, \omega)$ is presymplectic and the $G$-action on $M$ is Hamiltonian with a proper moment map $\phi$, then $\phi(M)\cap\mathfrak t^*_+$ may not be convex, or be a polyhedral set (see \cite{LS}). In recent years, Lin and Sjamaar propose a condition, called {\bf cleanness} of the $G$-action, and show that under this condition,  $\phi(M)\cap\mathfrak t^*_+ $ is a closed convex polyhedral set; this set is a convex polytope if $M$ is compact (\cite{LS}). See also \cite{RZ} for a presymplectic convexity result for torus actions by Ratiu and Zung.

In \cite{L00, L0, L}, the author studies the fundamental groups of  symplectic Hamiltonian $G$-manifolds, respectively for circle actions, 
for $G$-actions on compact and noncompact manifolds. The proof relies
in a certain degree on the polyhedral property of the image $\phi(M)\cap\mathfrak t^*_+$ above. In this paper, when the presymplectic Hamiltonian $G$-manifold has this same property, we study the fundamental groups of such manifolds.

Throughout this paper, $G$ always denotes  a connected compact Lie group, $T$ denotes a maximal torus of $G$, $\mathfrak g$ and $\mathfrak t$ are respectively the Lie algebras of $G$ and $T$, $\mathfrak g^*$ and $\mathfrak t^*$ are respectively the dual Lie algebras, and $\mathfrak t^*_+$ denotes a closed positive Weyl chamber in  
$\mathfrak t^*$.

If $(M, \omega)$ is a compact connected symplectic manifold with a Hamiltonian $G$-action and moment map $\phi$, for any $a\in\phi(M)$, we have the symplectic quotient space $M_a$. Due to the existence of different kinds of singular values on  $\phi(M)\cap\mathfrak t^*_+$, the 
$M_a$'s may have many different topological types.
In previous works, we show that the quotient map $M\to M/G$ induces an isomorphism $\pi_1(M)\cong\pi_1(M/G)$, and there are natural isomorphisms $\pi_1(M/G)\cong\pi_1(M_a)$ for all the symplectic quotients $M_a$'s. These results may no longer be true for presymplectic Hamiltonian $G$-manifolds. For example,
$M=S^1$ is a presymplectic manifold with trivial presymplectic form,  let the
circle group $S^1$ act on $M$ freely, then $\pi_1(M)\ncong \pi_1(M/S^1)$.
For another example, by \cite{L1}, the fundamental group of a connected compact contact toric manifold of Reeb type is a finite cyclic group, while the quotient space is contractible (see also Example~\ref{ex-contact}). 
For presymplectic Hamiltonian $G$-manifolds, we can similarly define
presymplectic quotients, see Definition~\ref{quotient};
Examples~\ref{ex1} and \ref{ex2} provide examples for $\pi_1(M/G)\ncong\pi_1(M_a)$ in the presymplectic case.

In this paper, assuming $(M, \omega)$ is a connected presymplectic 
Hamiltonian $G$-manifold, we look at the relations between $\pi_1(M)$,
$\pi_1(M/G)$ and the $\pi_1(M_a)$'s. In Section~\ref{sec2} we will explain the terminologies null subgroup, cleanness of the action, leafwise nontangency  and leafwise transitivity of the action, where the latter two are special cases of the cleanness of the action. Under the condition of the cleanness of the $G$ action, we obtain good results. 
In the following Theorems~\ref{O}, \ref{A} and \ref{B}, we concentrate on compact manifolds. Since $G$ is connected, the quotient map always induces a surjection
$\pi_1(M)\twoheadrightarrow\pi_1(M/G)$. This map may have a nontrivial kernel in the presymplectic case. The kernel comes from the images of
the fundamental groups of the orbits of the null group action --- the group action whose orbits are everywhere tangent to the leaves of the foliation given by 
$\ker(\omega)$.

\begin{theorem}\label{O}
Let $(M, \omega)$ be a connected compact presymplectic Hamiltonian $G$-manifold. If the $G$-action is leafwise nontangent everywhere, then
the quotient map induces an isomorphism $\pi_1(M)\cong\pi_1(M/G)$.
\end{theorem}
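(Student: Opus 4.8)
The plan is to study the orbit map $\pi\colon M\to M/G$ and to reduce the theorem to the single assertion that every $G$-orbit is nullhomotopic in $M$. First I would record the general fact that, for a compact connected Lie group acting on a connected (triangulable) space, $\pi_*\colon\pi_1(M)\to\pi_1(M/G)$ is surjective, with kernel equal to the normal subgroup of $\pi_1(M)$ generated by the images $\operatorname{im}\bigl(\pi_1(Gx)\to\pi_1(M)\bigr)$, $x\in M$. This is the Armstrong-type statement already used in the symplectic papers \cite{L00,L0,L}: surjectivity holds because $G$ is connected, and the identification of the kernel follows by analysing the orbit-type stratification of $M/G$ together with covering-space and Seifert--van Kampen arguments (a loop that dies downstairs can, up to homotopy, be pushed into a single orbit). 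Granting this, Theorem~\ref{O} is equivalent to: for every $x\in M$, the inclusion $Gx\hookrightarrow M$ induces the zero homomorphism on $\pi_1$.

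To prove that, I would run the moment-map Morse theory of the symplectic case, using leafwise nontangency to make it go through. Note first that leafwise nontangency forces the null subgroup $N$ to act trivially, so there is no leaf direction along the orbits; in fact, transverse to the leaves of $\ker\omega$ the geometry is honestly symplectic and $G$-Hamiltonian, with a local normal form $(\text{leaf})\times(\text{symplectic Hamiltonian slice})$. For generic $\xi$ in the Lie algebra of a torus, $\phi^\xi$ is constant along the leaves of $\ker\omega$ (since $d\phi^\xi=\iota_{X_\xi}\omega$ annihilates $\ker\omega$), hence degenerate in leaf directions; but in the leaf-transverse directions it is Morse--Bott--Kirwan, its critical set being a union of presymplectic submanifolds cut out by fixed subtori, with even-dimensional negative normal bundle. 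Consequently $M$ is built up to homotopy from a component $Z\subset M^{T_\xi}$ of the minimum set by attaching cells of index $\ge 2$, so $\pi_1(M)\cong\pi_1(Z)$; moreover a $T_\xi$-equivariant downward gradient flow (with invariant metric) pushes each orbit $Gx$ onto an orbit lying in $M^{T_\xi}$. Iterating over a sequence of generic $\xi$'s spanning $\mathfrak t$ (at each stage the minimum set carries a smaller torus action that is again leafwise nontangent) one reaches a component of $M^T$ with the orbit $Gx$ homotoped, inside $M$, onto a single point; hence $Gx$ is nullhomotopic in $M$. For nonabelian $G$ one replaces the one-parameter bookkeeping by the $G$-invariant function $|\phi-a|^2$ (Kirwan's minimally degenerate function, in its presymplectic form), whose $G$-equivariant negative gradient flow carries $G$-orbits to $G$-orbits and retracts $M$ onto the critical set, the minimum stratum being $\phi^{-1}(a)$; the same induction, now also reducing the group, finishes the argument.

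The main obstacle is the middle step: setting up the Morse theory of $\phi^\xi$ and of $|\phi-a|^2$ in the presymplectic setting, where the function is constant on the leaves of $\ker\omega$ and the critical sets are positive-dimensional leafy submanifolds. This is exactly where leafwise nontangency is indispensable — it guarantees that the leaf directions contribute $0$ to every Morse index, that the critical sets are well-behaved presymplectic submanifolds carrying the correct residual (smaller-torus, leafwise-nontangent) structure, and that the equivariant gradient flows exist and behave as in the symplectic case. Granting the local normal form and convexity/connectedness package of \cite{LS}, the remaining verifications are routine adaptations of the symplectic arguments of \cite{L}.
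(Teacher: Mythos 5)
Your route is genuinely different from the paper's: the paper never reduces Theorem~\ref{O} to contractibility of orbits, but instead proves $\pi_1(M)\cong\pi_1(\phi^{-1}(v))=\pi_1(M_v)$ for the vertex $v$ of $\phi(M)\cap\mathfrak t^*_+$ furthest from the origin (Lemma~\ref{simpc} and Proposition~\ref{mmv}, via Morse--Bott theory of a single circle component whose critical sets are fixed leaves), and then quotes Theorem~\ref{C}, $\pi_1(M/G)\cong\pi_1(M_a)$, whose proof is the heavy part (local normal form, cross sections, \L ojasiewicz retraction, and the link analysis of Sections 5--6). Your reduction via an Armstrong-type identification of $\ker(\pi_1(M)\to\pi_1(M/G))$ with the normal closure of orbit images is legitimate in spirit --- it is exactly the mechanism of Lemma~\ref{lift} and Proposition~\ref{q1} (there applied to $N$, not $G$) --- and your flow idea is close to the paper's ``Proof II'' trick. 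But as written your plan has a genuine gap at the step that is supposed to contract the orbits.

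Concretely: the gradient flow of $\phi^{\xi}$ is equivariant only under the stabilizer of $\xi$ (a torus), not under $G$, so it does \emph{not} push a $G$-orbit onto a $G$-orbit, and even if you could land $Gx$ near a component of $M^T$, a nonabelian orbit there is not a point; so ``one reaches a component of $M^T$ with the orbit $Gx$ homotoped onto a single point'' fails for nonabelian $G$. Your proposed substitute, the $G$-equivariant flow of $\|\phi-a\|^2$, only moves a $G$-orbit to another $G$-orbit inside a Kirwan-type critical stratum (its critical set is not $G$-fixed, and the ``retraction onto the critical set'' is not continuous across strata); turning that into a contraction requires a presymplectic version of Kirwan's analysis of the strata plus an induction over smaller groups, which is nowhere routine and is not what the paper does. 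You also never address the classes of $\pi_1(G\cdot x)$ coming from $\pi_0(G_x)$, which are not orbits of circle subgroups. The natural repair stays inside your framework: given $g\in G_x$, write $g=\exp(X)$ and contract the loop $t\mapsto\exp(tX)\cdot x$ by the flow $\psi_s$ of $-\grad(\phi^{X})$ for a $G$-invariant compatible metric; $\psi_s$ commutes with $\exp(tX)$ (which preserves $\phi^X$ and the metric), so $\psi_s(\exp(tX)x)=\exp(tX)\psi_s(x)$ stays a loop, and by leafwise nontangency every critical point $p$ of $\phi^X$ has $X_{M,p}=0$, so the loop converges uniformly to a point. Note here that nontangency (unlike cleanness, cf.\ Proposition~\ref{toric}) is inherited by every subgroup, so the Morse--Bott property of $\phi^X$ from \cite{LS} is available, and compactness gives convergence; these loops together with one-parameter loops of $T$ generate the image of $\pi_1(G\cdot x)$. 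Finally, two smaller points: your assertion ``$M$ is built from $Z$ by cells of index $\ge 2$, so $\pi_1(M)\cong\pi_1(Z)$'' is not justified --- attaching $2$-cells can kill $\pi_1$, and handling the index-$2$ components is precisely the content of the paper's two proofs of Theorem~\ref{circsymp} --- though your final argument does not actually need this isomorphism; and the Armstrong-type kernel statement should be cited or proved carefully (it is the stratification argument behind Lemma~\ref{lift}/Proposition~\ref{q1}), with the isomorphism in Theorem~\ref{O} understood as the one induced by the quotient map.
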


In Theorem~\ref{O}, if $(M, \omega)$ is symplectic, then the assumption on the nontangency of the action is automatically satisfied.

\medskip

Suppose $G$ acts on a manifold $M$. Let $M_{(H)}$ be the set of points in $M$ with stabilizer groups conjugate to $H\subset G$, it is called the {\bf $(H)$-orbit type}. 
It is clear that the $G$-orbits in $M_{(H)}$ are diffeomorphic to each other. If $M_{(H)}$ is closed, it is called a {\bf closed orbit type}. 
For more general cases than that in Theorem~\ref{O}, Theorem~\ref{A} gives a description on the kernel of the map $\pi_1(M)\to\pi_1(M/G)$. 

\begin{theorem}\label{A}
Let $(M, \omega)$ be a connected compact presymplectic Hamiltonian $G$-manifold with a clean $G$-action. Let $N$ be the null subgroup. From each closed orbit type of the $N$-action, take an $N$-orbit 
$\mathcal O$. Let $\langle\im\big(\pi_1(\mathcal O)\big)\rangle$ be the normal subgroup of $\pi_1(M)$ generated by the images of the $\pi_1(\mathcal O)$'s in 
$\pi_1(M)$.
Then the quotient map induces an isomorphism $\pi_1(M)/\langle\im\big(\pi_1(\mathcal O)\big)\rangle \cong\pi_1(M/G)$. 
\end{theorem}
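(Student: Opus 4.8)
The plan is to study the map $p_*\colon\pi_1(M)\to\pi_1(M/G)$ induced by the orbit map $p\colon M\to M/G$. Since $G$ is connected, $p_*$ is surjective --- the same standard fact about connected group actions that lies behind Theorem~\ref{O} --- so the content is to identify $\ker p_*$ with $\langle\im(\pi_1(\mathcal O))\rangle$. One inclusion is immediate: an $N$-orbit $\mathcal O$ lies in a single $G$-orbit, so $p$ collapses it to a point, the composite $\pi_1(\mathcal O)\to\pi_1(M)\xrightarrow{p_*}\pi_1(M/G)$ vanishes, and since $\ker p_*$ is normal we get $\langle\im(\pi_1(\mathcal O))\rangle\subseteq\ker p_*$. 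Everything therefore comes down to $\ker p_*\subseteq\langle\im(\pi_1(\mathcal O))\rangle$.

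First I would show $\ker p_*$ equals the normal closure $K$ of the images of $\pi_1(G\cdot x)$ over all $x\in M$. Exactly as in the symplectic case, this is done by comparing $M$ and $M/G$ along the orbit-type stratification, using the presymplectic cross-section theorem and local normal form of \cite{LS} (which the cleanness hypothesis makes available): endowing both spaces with compatible CW/handle decompositions and applying van Kampen, a $2$-cell of $M/G$ either lifts to $M$, giving a relation already present in $\pi_1(M)$, or records the collapse of an orbit, giving the relation that the corresponding orbit loop dies. Next I would cut $K$ down to closed orbit types: the closure of any orbit-type stratum contains a minimal, hence closed, stratum, and at a point $y$ of such a stratum the equivariant radial retraction of a slice tube $G\times_{G_y}V$ onto its core $G\cdot y$ is, on a nearby orbit, the projection $G/G_x\to G/G_y$; hence $\im(\pi_1(G\cdot x)\to\pi_1(M))$ lies, up to conjugacy (absorbed by the normal closure), in $\im(\pi_1(G\cdot y)\to\pi_1(M))$.

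It then remains to replace $G$-orbit loops by loops in $N$-orbits, and this is where cleanness is really used. By it, $N\cdot x=(G\cdot x)\cap L_x$ with $L_x$ the leaf through $x$, and since $N\trianglelefteq G$ there is a fibration $N\cdot x\hookrightarrow G\cdot x\twoheadrightarrow G/(G_xN)$, whose homotopy exact sequence gives $\pi_1(G\cdot x)/\im(\pi_1(N\cdot x))\cong\pi_1(G/(G_xN))$. The base $G/(G_xN)$ is the $x$-orbit of the leafwise-nontangent action got by dividing out the null directions, so its $\pi_1$ should be killed in $\pi_1(M)$ modulo orbit loops already in $K$ --- morally by Theorem~\ref{O}, and concretely by an inductive Morse-theoretic descent along a generic component $\phi^{\xi}=\langle\phi,\xi\rangle$ of the moment map which pushes $G\cdot x$ into orbits of strictly larger stabilizer, via the normal form of \cite{LS}, until only loops in $N$-orbits of closed $N$-orbit type survive. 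Combining, $\ker p_*=K=\langle\im(\pi_1(\mathcal O))\rangle$, with one $\mathcal O$ chosen from each closed $N$-orbit type and different choices conjugate.

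The main obstacle is the comparison $\ker p_*=K$ and the vanishing of the leafwise-transverse part in the last step. For symplectic $M$ these rest on the Morse theory of a generic component of the moment map --- a perfect Morse--Bott function whose critical manifolds are unions of smaller strata, so that $M$ is assembled from the preimage of the face on which that component is minimal by handle attachments of controlled index --- together with the cross-section theorem reducing matters to torus actions. The work is to run the presymplectic versions: checking that the null foliation does not spoil the handle decomposition, which is where the hypotheses that $N$ is closed and $\phi$ is proper enter, and making precise the reduction obtained by dividing out the null directions (Theorem~\ref{O} being the substitute for a direct study of the possibly singular leaf space). The trivial inclusion and the slice-retraction reduction are, by contrast, routine facts about compact group actions.
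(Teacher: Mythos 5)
Your opening moves are sound and in fact mirror pieces of the paper: the surjectivity of $p_*$, the inclusion $\langle\im\big(\pi_1(\mathcal O)\big)\rangle\subseteq\ker p_*$, the identification of the kernel of a quotient-by-a-connected-group map with the normal closure of classes of loops lying in orbits (this is in substance what Lemma~\ref{lift} provides, so your CW/handle re-derivation is heavier than needed but not problematic), the slice-retraction reduction to closed orbit types, and the homogeneous fibration $N\cdot x\hookrightarrow G\cdot x\to G/(G_xN)$ are all correct. Note, though, that the paper's architecture is different: it never analyzes $\ker\big(\pi_1(M)\to\pi_1(M/G)\big)$ orbit by orbit, but factors through $M/N$; Proposition~\ref{q1} gives $\pi_1(M)/\langle\im\big(\pi_1(\mathcal O)\big)\rangle\cong\pi_1(M/N)$, Proposition~\ref{q2} gives $\pi_1(M/N)\cong\pi_1(M_v)$ by Morse--Bott theory of one moment-map component on $M/N$, and Theorem~\ref{C} gives $\pi_1(M_v)\cong\pi_1(M/G)$.

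The genuine gap is your last step, which is where the entire content of the theorem sits: you must show that the classes coming from the base $\pi_1\big(G/(G_xN)\big)$ die in $\pi_1(M)$ modulo $N$-orbit loops, and neither of your proposed mechanisms delivers this. Quoting Theorem~\ref{O} ``morally'' is not available: its hypothesis (leafwise nontangency) fails in the present setting; the leafwise-nontangent action ``obtained by dividing out the null directions'' lives on $M/N$, which is not a manifold, so Theorem~\ref{O} cannot be applied to it; and within this paper Theorem~\ref{O} is itself deduced from Proposition~\ref{mmv} and Theorem~\ref{C}, i.e.\ from the machinery you are trying to bypass. The concrete mechanism you offer --- gradient descent of a single generic component $\phi^{\xi}$ pushing $G\cdot x$ into orbits of strictly larger stabilizer --- cannot kill the nonabelian transverse loops: one component controls only one circle direction, and, unlike the symplectic case, on $M$ the critical manifolds of $\phi^{\xi}$ are not fixed-point sets of that circle (the circle may act nontrivially along the leaves), which is exactly the new presymplectic difficulty. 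The paper's resolution is to pass to $M/N$, where the images of the critical sets are honest fixed-point sets of the induced circle so that both symplectic-style arguments go through (Proposition~\ref{q2}), and then to prove $\pi_1(M_a)\cong\pi_1(M/G)$ (Theorem~\ref{C}) via the cross-section theorem, the local normal form, the Lojasiewicz retraction (Theorem~\ref{retract} and Corollary~\ref{corretract}), and the link analysis resting on Proposition~\ref{q} and Armstrong's theorem (Theorem~\ref{arm}). Your outline acknowledges that ``the work is to run the presymplectic versions,'' but it offers no substitute for either of these two blocks, so the core of the proof is missing rather than merely deferred.
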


Theorem~\ref{A} recovers the case of symplectic manifolds, where $N$ is trivial, see \cite{L0}.

\medskip

 If $(M, \alpha)$ is a contact manifold, where $\alpha$ is a contact $1$-form, if we take $\omega = d\alpha$, then it is 
a presymplectic form. The leaves of the null foliation are the Reeb orbits
of $\alpha$. For a contact $G$-action on $M$, there is automatically a contact moment map $\phi\colon M\to\mathfrak g^*$ given by $\langle \phi, \xi\rangle = \phi^{\xi} = \alpha (\xi_M)$ for any $\xi\in\mathfrak g$, where
$\xi_M$ is the vector field on $M$ generated by $\xi$. This moment map is $G$-equivariant (see e.g.\cite{G}). It is easy to check that up to a sign, this moment map is the same as the one defined in Sec. 2 for the presymplectic $G$-manifold $(M, d\alpha)$.

Let $(M, \alpha)$ be a connected compact contact $G$-manifold. The $G$-action is called of {\bf Reeb type} if there is a Lie algebra element $\xi\in\mathfrak g$ which generates the Reeb vector field of $\alpha$. We can perturb $\alpha$ to a $G$-invariant contact form $\alpha'$ so that 
$\ker (\alpha) = \ker (\alpha')$, and the Reeb vector field of $\alpha'$ is generated by a rational element $\xi'\in\mathfrak g$, which is close to $\xi$, so that $\xi'$ is the generator of the Lie algebra of a circle subgroup of $G$ (see for example \cite{BG}). Clearly, $\pi_1(M)$ and $\pi_1(M/G)$ are independant of the chosen invariant contact form.
For contact $G$-manifolds, we specialize the result as follows. See Example~\ref{ex-contact} for an application.

\begin{theorem}\label{B}
Let $(M, \alpha)$ be a connected compact contact manifold with a clean
$G$-action. Then the action is either trivial or is
of Reeb type. 
When the action is of Reeb type, perturb the contact form to an invariant contact form so that its Reeb orbits are generated by a circle subgroup $S^1\subset G$, and let 
$m=\lcm\big\{k\,|\, \Z_k\subset S^1 \,\mbox{is a stabilizer group}\big\}$, then 
$S^1/\Z_m$ corresponds to a loop $\mathcal O$ in $M$. Let  
$\langle\im\big(\pi_1(\mathcal O)\big)\rangle$ be the normal subgroup of $\pi_1(M)$ generated by
the image of $\pi_1(\mathcal O)$ in $\pi_1(M)$. Then the quotient map induces an isomorphism
$\pi_1(M)/\langle\im\big(\pi_1(\mathcal O)\big)\rangle \cong \pi_1(M/G)$.  
\end{theorem}

\smallskip

For a presymplectic Hamiltonian $G$-manifold, we define the quotient spaces as follows, which is similar to the symplectic quotient spaces.

\begin{definition}\label{quotient}
Let $(M, \omega)$ be a presymplectic Hamiltonian $G$-manifold, and 
let $\phi\colon M\to\mathfrak g^*$ be the $G$-equivariant moment map.
For any $a\in\phi(M)$, define the presymplectic quotient space at $a$ to be $M_a = \phi^{-1}(a)/G_a = \phi^{-1}(G\cdot a)/G$, where $G_a$ is the stabilizer group of $a$ under the coadjoint action.
\end{definition}

Theorem~\ref{conv} in Sec. 2 proved by Lin and Sjamaar states that 
when we have a clean Hamiltonian $G$-action on a connected presymplectic manifold
$M$ with a proper moment map $\phi$, then $\phi(M)\cap\mathfrak t^*_+$ is a closed convex polyhedral set. If this set is not a single point, then there are infinitely many presymplectic quotient spaces; and like in the symplectic case, the existence of possibly different kinds of singular values infers that the quotient spaces may have many different topological types. 
The quotient space $M_a$ in Definition~\ref{quotient} may not be presymplectic or smooth,  similar to the symplectic case. Here we are not looking for a condition to
make a certain $M_a$ to be a smooth presymplectic manifold.
We consider the family $\{M_a\,|\, a\in\phi(M)\}$ as topological spaces, and study their fundamental groups as topological spaces.  

\begin{theorem}\label{C}
Let $(M, \omega)$ be a connected presymplectic Hamiltonian $G$-manifold with a clean $G$-action and proper moment map $\phi$. Then
we have natural isomorphisms $\pi_1(M/G)\cong\pi_1(M_a)$ for all $a\in \phi(M)$.
\end{theorem}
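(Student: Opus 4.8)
The plan is to follow the strategy of the symplectic case \cite[Theorem~1.5]{L}, the point being that the geometric ingredients it uses --- the local normal form, the cross--section construction, and the gradient flow of the norm square of the moment map --- all have counterparts in the clean presymplectic setting of \cite{LS}. Write $\Delta:=\phi(M)\cap\mathfrak t^*_+$; by Theorem~\ref{conv} this is a closed convex polyhedral set. The moment map descends to $\bar\phi\colon M/G\to\mathfrak t^*_+$ with image $\Delta$, and the fibre of $\bar\phi$ over the class of $b$ is exactly $M_b$. Thus $M/G$ maps properly onto the contractible set $\Delta$ with the presymplectic quotients as fibres, and the content of the theorem is that each fibre carries all of $\pi_1(M/G)$.

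I would prove this in two parts. First I would show that $\pi_1(M_b)$ is independent of $b\in\Delta$. For two nearby points lying in the relative interior of a common face of $\Delta$ and consisting of regular values of $\phi$, the reduced spaces are diffeomorphic, being nearby fibres of the fibre bundle obtained by restricting $\bar\phi$ over that face. For a wall crossing I would use the presymplectic local normal form around $\phi^{-1}$ of the wall (the Marle--Guillemin--Sternberg description adapted to clean presymplectic actions, which underlies \cite{LS}) to show that the two reduced spaces differ, up to homotopy, by the removal or collapse of a subset of real codimension at least $2$, so that $\pi_1$ is unchanged. Concatenating along paths in the connected set $\Delta$ then gives $\pi_1(M_b)\cong\pi_1(M_a)$ for all $b$; in particular one may from now on assume that $a$ is a regular value of $\phi$ lying in the interior of $\mathfrak t^*_+$.

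Second I would compare $\pi_1(M/G)$ with $\pi_1(M_a)$ for this chosen $a$. Using the presymplectic cross--section theorem, as in the symplectic case, I would reduce to $G=T$, so that $M_a=\phi^{-1}(a)/T$. Fixing a $T$--invariant inner product on $\mathfrak t^*$, consider $f(x)=\tfrac12\|\phi(x)-a\|^2$: it is $T$--invariant, and proper since $\phi$ is, with minimum set $\phi^{-1}(a)$, and it descends to a proper function $\bar f$ on $M/T$ with minimum set $M_a$. The gradient flow of the norm square then gives a Kirwan--Lerman--type stratification of $M$ which is $T$--equivariant and hence descends to $M/T$; I would check that, after passing to the quotient, the Morse decomposition of $(M/T,\bar f)$ builds $M/T$ from $M_a$ by attaching only pieces irrelevant to $\pi_1$ (indices at least $2$, equivalently unstable sets of real codimension at least $2$ in $M/T$), so that the inclusion $M_a\hookrightarrow M/T$ is a $\pi_1$--isomorphism. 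Together with the first part this gives $\pi_1(M/G)\cong\pi_1(M_a)$ for every $a\in\phi(M)$.

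I expect the main obstacle to be this last step: establishing the presymplectic and $T$--quotient version of the stratification of $\|\phi-a\|^2$ and verifying that its non--minimal pieces contribute no $1$-- or $2$--cells to $M/T$. In particular one must control the presymplectic gradient flow where the null foliation interacts with the critical sets, and track the Kirwan indices through the (possibly singular) quotient by $T$. The cross--section reduction and the wall--crossing comparison of the first part should, by contrast, be fairly direct adaptations of \cite{LS} and \cite{L}.
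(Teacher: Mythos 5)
Your overall plan (compare reduced spaces at nearby values, then compare one chosen reduced space with the full quotient, reducing the nonabelian case to $T$ via cross-sections) has the right general shape, but two of its steps, as stated, would fail, and they are exactly where the paper does its work. First, in the presymplectic setting you cannot ``assume $a$ is a regular value of $\phi$ lying in the interior of $\mathfrak t^*_+$'': by Proposition~\ref{aspan} the image $\phi(M)$ lies in the affine subspace $\lambda+\mathfrak n^{\circ}$, so when the null ideal is nonzero there are no regular values at all, and $\phi(M)$ may meet only lower-dimensional faces of $\mathfrak t^*_+$; the paper works instead with generic values on the principal face $\tau^P$. More seriously, reducing $\pi_1(M/G)$ to the torus case is not a formal consequence of the cross-section theorem (Theorem~\ref{cross}): the principal cross-section only computes $\pi_1\big(\phi^{-1}(G\cdot\tau^P)/G\big)$, and one must first show that deleting the strata $\phi^{-1}(G\cdot\tau)/G$ over all non-principal faces $\tau$ does not change $\pi_1(M/G)$. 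That is the content of Proposition~\ref{Glink}, which rests on the Lie-theoretic Proposition~\ref{q} for the semisimple part of $G$ and on Armstrong's Theorem~\ref{arm}, and which must accommodate the new features of the presymplectic normal form (the null slice $V$, and $\mathfrak q^*$ in place of $\mathfrak m^*$). Your proposal does not address this step at all.

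Second, your criterion ``indices at least $2$, equivalently unstable sets of real codimension at least $2$'' is not sufficient to leave $\pi_1$ unchanged: removing a codimension-$2$ subset preserves surjectivity of $\pi_1$ of the complement onto $\pi_1$ of the space but can destroy injectivity, and attaching index-$2$ pieces can kill classes in $\pi_1$ (this is precisely the delicate index-$2$ case already visible in the circle model, Theorem~\ref{circsymp}). On the singular quotient $M/T$ the correct criterion is that the links of the removed strata be connected and simply connected (Lemma~\ref{remove}), and verifying this is the technical heart of the argument (Proposition~\ref{Tlink}, with its case analysis via Theorem~\ref{arm} and a Van Kampen argument for the pieces $A_i/H$ and $B_i/H$). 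Moreover, the paper does not use a global Kirwan-type Morse decomposition of $\|\phi-a\|^2$ descended to $M/T$ --- whose existence and behaviour on the singular quotient you acknowledge as the main obstacle but do not supply --- but rather the local equivariant deformation retraction of a neighbourhood of $\phi^{-1}(G\cdot a)$ onto $\phi^{-1}(G\cdot a)$ coming from the Lojasiewicz inequality (Theorem~\ref{retract}, Corollary~\ref{corretract}), alternated with removal of strata with good links. As written, the decisive step of your plan is the one left unproved.
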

This theorem includes the symplectic case as a special case \cite[Theorem 1.5]{L}.

In Example~\ref{ex0}, we give an application of Theorems~\ref{A} and \ref{C}.

\medskip

The paper is organized as follows. In Section 2, we introduce clean $G$-actions, and the notions appeared in the theorems. We also introduce equivariant moment maps on presymplectic $G$-manifolds. In Section 3,
We prove Theorems~\ref{O} and \ref{A}. In Section 4, we study compact contact $G$-manifolds and prove Theorem~\ref{B}. In Section 5, we consider presymplectic Hamiltonian $G$-manifolds  when the action is leafwise transitive everywhere. In this case, we can use the symplectization of the presymplectic $G$-manifold and previous results on symplectic manifolds to prove Theorem~\ref{C}.  In Section 6, for the proof of Theorem~\ref{C} in the general case, we state the local normal form theorem in the presymplectic case, which is established in \cite{LS}; we address the cross section theorem and a convergence theorem in the presymplectic case. In Section 7, we prove Theorem~\ref{C}. In Section 8,
we give an application of Theorems~\ref{A} and \ref{C}, and we give two counter examples of the theorems for non-clean $G$-actions.

\subsubsection*{Acknowledgement}  
I would like to thank Reyer Sjamaar and Yi Lin for some helpful discussions.
I thank the anonymous referee for a suggestion and some comments.

\section{clean $G$-actions and moment maps on presymplectic $G$-manifolds}\label{sec2}
In this section, we explain the terminologies occurring in the theorems in
the Introduction, and we set up the materials needed for the next sections.

\medskip

Let $(M, \omega)$ be a presymplectic manifold. The distribution 
$$\ker(\omega) = \big\{ v\in T_mM, m\in M\,|\, \omega(v, \cdot)=0\big\}$$
is involutive \cite{BG0}, hence by Frobenius' theorem, it integrates to a regular foliation $\mathcal F$ of $M$,  called the {\bf null foliation}. The leaves of $\mathcal F$ may not be closed.

In  \cite{LS}, Lin and Sjamaar introduce the concept {\it clean $G$-actions}, unifying earlier
considerations of {\it nontangent $G$-actions} by He (\cite{H}) and {\it leafwise transitive torus actions} by Ishida (\cite{I}). Here
we introduce this concept,  one may refer to \cite{LS} for more detail.
Let a connected compact Lie group $G$ act on a presymplectic manifold 
$(M, \omega)$ preserving $\omega$. We call the action {\bf leafwise nontangent everywhere} if
$$T_m(G\cdot m)\cap T_m\mathcal F = 0 \,\,\,\mbox{for each $m\in M$}.$$
Let $\mathfrak n\subset \mathfrak g$ be the Lie subalgebra consisting of all elements $\xi\in\mathfrak g$ such that the induced vector field $\xi_M$ is tangent everywhere to $\mathcal F$, and  let $N$  be the connected immersed Lie subgroup with Lie algebra $\mathfrak n$. It can be shown that the adjoint action of $G$ on $\mathfrak g$ preserves $\mathfrak n$, so 
$\mathfrak n$ is an ideal and hence $N$ is a normal subgroup (\cite{LS}).
The ideal $\mathfrak n$ is called the {\bf null ideal}, and the normal subgroup $N$ is called the {\bf null subgroup}.
We call the $G$-action {\bf leafwise transitive everywhere} if 
$$N\cdot m = \mathcal F(m) \,\,\,\mbox{for each $m\in M$, where $\mathcal F(m)$ is the leaf through $m$}.$$
In \cite{LS}, Lin and Sjamaar defined cleanness of the $G$-action at any point and show that for connected manifold $M$, if the $G$-action is clean at each point on $M$, then the following equation holds: 
\begin{equation}\label{clean}
T_m(G\cdot m)\cap T_m\mathcal F = T_m(N\cdot m) \,\,\,\mbox{for each $m\in M$}.
\end{equation}
In this paper, we only use the notion of cleanness of the action on the whole manifold, so we use this equation as the definition of cleanness of the action on the manifold, i.e., the $G$-action is called {\bf clean on $M$} if (\ref{clean}) holds.

\smallskip
Next, we introduce Hamiltonian $G$-actions on presymplectic manifolds.
The presymplectic $G$-action on $(M, \omega)$ is called {\bf Hamiltonian} if there exists 
a {\bf moment map} 
$\phi\colon M\to \mathfrak g^*$ such that
\begin{itemize}
\item  $i(\xi_M)\omega = d\langle \phi, \xi\rangle$ for each $\xi\in \mathfrak g$, 
where $\xi_M$ is the vector field generated by the $\xi$-action, and
\item  $\phi(g\cdot m)= Ad^*_g(\phi(m))$ for all $g\in G$ and all $m\in M$. This property  is called the equivariance of $\phi$.
\end{itemize}
In the case of Hamiltonian $G$-actions,  the null ideal $\mathfrak n$ satisfies
$$\mathfrak n = \big\{\xi\in \mathfrak g\,|\, \xi_M\in T\mathcal F\big\}=\big\{\xi\in \mathfrak g\,|\, \phi^{\xi} = \langle\phi, \xi\rangle \,\,\mbox{is locally constant on $M$}\big\}.$$
We have the following claim on the image of the moment map for a Hamiltonian $G$-action. Note that this claim does not need the action to be clean.
\begin{proposition}\label{aspan}\cite{LS}
Let $(M, \omega)$ be a connected presymplectic $G$-manifold with
moment map $\phi$. Let $\mathfrak n$ be the null ideal. Then the affine span
of the image $\phi(M)$ is of the form $\lambda + \mathfrak n^{\circ}$,
where $\lambda$ is a fixed point in $\mathfrak g^*$ for the coadjoint action,
and  $\mathfrak n^{\circ}$ is the annihilator of $\mathfrak n$.
\end{proposition}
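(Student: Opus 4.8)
The plan is to first identify the direction space of the affine span of $\phi(M)$ with the annihilator $\mathfrak n^{\circ}$, and then to produce, by averaging over $G$, a coadjoint fixed point lying inside that affine span. Fix a base point $m_0\in M$ and let $W\subset\mathfrak g^*$ be the linear span of $\{\phi(m)-\phi(m_0)\,:\,m\in M\}$, so that the affine span of $\phi(M)$ equals $\phi(m_0)+W$. I would compute the annihilator $W^{\circ}\subset\mathfrak g$ directly: an element $\xi\in\mathfrak g$ lies in $W^{\circ}$ precisely when $\langle\phi(m)-\phi(m_0),\xi\rangle=0$ for every $m\in M$, that is, precisely when $\phi^{\xi}=\langle\phi,\xi\rangle$ is constant on $M$. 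Since $M$ is connected, being constant is the same as being locally constant, and by the description of the null ideal recalled just before the statement this is exactly the condition $\xi\in\mathfrak n$. Hence $W^{\circ}=\mathfrak n$, and taking annihilators back in the finite-dimensional space $\mathfrak g^*$ gives $W=(W^{\circ})^{\circ}=\mathfrak n^{\circ}$. Thus the affine span of $\phi(M)$ is $\phi(m_0)+\mathfrak n^{\circ}$.

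It then remains to exhibit one coadjoint fixed point $\lambda$ inside this affine subspace. I would take $\lambda=\int_G Ad^*_g\big(\phi(m_0)\big)\,dg$, the average with respect to normalized Haar measure on $G$; this point is manifestly fixed by the coadjoint action. To check that $\lambda\in\phi(m_0)+\mathfrak n^{\circ}$ it suffices to verify $\langle\lambda-\phi(m_0),\xi\rangle=0$ for all $\xi\in\mathfrak n$. By equivariance of $\phi$ one has $\langle Ad^*_g\phi(m_0),\xi\rangle=\langle\phi(g\cdot m_0),\xi\rangle=\phi^{\xi}(g\cdot m_0)$, and for $\xi\in\mathfrak n$ the function $\phi^{\xi}$ is constant on the connected manifold $M$, so this equals $\phi^{\xi}(m_0)=\langle\phi(m_0),\xi\rangle$. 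Integrating over $G$ gives $\langle\lambda,\xi\rangle=\langle\phi(m_0),\xi\rangle$, which is exactly what is needed. Therefore $\lambda$ is a coadjoint fixed point lying in the affine span, and the affine span of $\phi(M)$ equals $\lambda+\mathfrak n^{\circ}$.

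I do not anticipate a serious obstacle: the argument is essentially a double-annihilator computation together with the observation that for $\xi\in\mathfrak n$ the function $\langle\phi,\xi\rangle$ is globally, not merely locally, constant because $M$ is connected. The one point that requires a little care is that the averaged point $\lambda$ remains inside the affine span; this is precisely what equivariance of $\phi$ provides, since it rewrites $Ad^*_g\phi(m_0)$ as the moment-map value $\phi(g\cdot m_0)$, on which the functionals $\xi\in\mathfrak n$ are constant. (One could alternatively invoke that $\mathfrak n$ is an $Ad$-invariant ideal to reach the same conclusion, but the equivariance route is cleaner and avoids separate bookkeeping.)
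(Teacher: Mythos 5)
Your argument is correct, and note that the paper itself does not prove this proposition: it is quoted from Lin--Sjamaar \cite{LS} without proof, so there is no in-paper argument to compare against. Your two steps are both sound and together give a complete, self-contained proof. The identification of the direction space is exactly the double-annihilator computation: with $W$ the span of $\{\phi(m)-\phi(m_0)\}$, an element $\xi$ annihilates $W$ iff $\phi^{\xi}$ is constant, which by connectedness of $M$ and the displayed description of the null ideal (namely $\mathfrak n=\{\xi\,|\,\phi^{\xi}\ \mbox{locally constant}\}$, equivalent to $d\phi^{\xi}=0$, i.e.\ $\xi_M\in T\mathcal F$) is the condition $\xi\in\mathfrak n$; hence $W=(W^{\circ})^{\circ}=\mathfrak n^{\circ}$ in the finite-dimensional setting. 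The second step, producing a coadjoint fixed point inside the affine span by averaging $Ad^*_g\bigl(\phi(m_0)\bigr)$ over Haar measure, is also fine: equivariance of $\phi$ (which is part of the paper's definition of a moment map) converts $\langle Ad^*_g\phi(m_0),\xi\rangle$ into $\phi^{\xi}(g\cdot m_0)$, and for $\xi\in\mathfrak n$ this is the constant $\phi^{\xi}(m_0)$, so $\lambda-\phi(m_0)\in\mathfrak n^{\circ}$ and $\lambda+\mathfrak n^{\circ}=\phi(m_0)+\mathfrak n^{\circ}$ is the affine span. No gap remains; the only care point you already flag (that the averaged point stays in the affine span) is handled correctly by the equivariance argument.
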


Under the cleanness assumption of the action, Lin and Sjamaar obtain the following theorem.
\begin{theorem} \cite{LS} \label{conv}
Let $(M, \omega)$ be a connected presymplectic Hamiltonian $G$-manifold
with a clean $G$-action and proper moment map $\phi$. Then the fibers of $\phi$ are connected, $\phi(M)\cap \mathfrak t^*_+$ is a closed convex  polyhedral set, and $\phi(M)\cap\mathfrak t^*_+$ is rational  if and only if the null subgroup $N$ is closed. 
If $M$ is compact, then $\phi(M)\cap\mathfrak t^*_+$  is a convex polytope.
\end{theorem}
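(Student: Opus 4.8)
This theorem is the presymplectic analogue of the Atiyah--Guillemin--Sternberg--Kirwan convexity theorem, and I would prove it by reproducing the standard architecture of the nonabelian convexity proof, with the null foliation $\mathcal F$ inserted in the right places. The guiding principle is Proposition~\ref{aspan}: since $\phi(M)\subseteq\lambda+\mathfrak n^\circ$, the moment map records only the geometry transverse to $\mathcal F$, and cleanness (Theorem~\ref{LS}) is exactly the condition that makes this transverse geometry behave like an honest symplectic moment map. The proof would rest on three ingredients: (i) an equivariant local normal form for clean presymplectic Hamiltonian actions; (ii) the torus case, via Morse theory on the components $\phi^\xi$; (iii) the reduction of the nonabelian case to the torus case via presymplectic ``cross-sections''.

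For (i), near a point $m$ one produces a $G$-equivariant model for $(M,\omega,\phi)$ of the shape (coadjoint-orbit piece) $\times$ (linear symplectic slice representation of the stabilizer) $\times$ (flat leaf direction), so that locally $\phi$ is a moment map whose image is a polyhedral cone with apex $\phi(m)$, lying in the translate of $\mathfrak n_m^\circ$ and rational exactly when $N_m$ is closed. Cleanness is what forces the splitting of the slice into a symplectic part and a leaf part to be $G$-invariant and compatible with $\phi$. I expect this to be the main obstacle: once such a normal form is available, the remaining steps stay close to the symplectic arguments.

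For (ii), take $G=T$; the claim becomes that $\phi(M)$ is a closed convex locally polyhedral set with connected fibers. For generic $\xi\in\mathfrak t$ the function $\phi^\xi=\langle\phi,\xi\rangle$ is proper (as $\phi$ is), is constant along $\mathcal F$ (because $d\phi^\xi=\iota(\xi_M)\omega$ vanishes on $\ker\omega=T\mathcal F$), and has critical set $\{x:\xi_M(x)\in T_x\mathcal F\}$. Since the Morse theory of $\phi^\xi$ effectively takes place transverse to $\mathcal F$ --- where cleanness makes the geometry symplectic --- this critical set is clean and the negative normal bundles have even rank, so the Atiyah--Guillemin--Sternberg connectedness argument goes through: the level sets $\{\phi^\xi=c\}$ are connected, hence so are the fibers of $\phi$, and $\phi(M)$ is convex. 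Local polyhedrality and the rationality criterion come from the cones of step (i), and if $M$ is compact then $\phi(M)$ is bounded, hence a polytope whose vertices lie among the images of the finitely many critical components.

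For (iii), over the relative interior $\sigma^\circ$ of a face $\sigma$ of $\mathfrak t^*_+$ (in particular over the open chamber $\mathfrak t^*_{++}$) the preimage $\phi^{-1}(\sigma^\circ)$ should again be a presymplectic submanifold on which $G_\sigma$ acts cleanly with proper moment map the restriction of $\phi$; applying (ii) to its maximal torus and using a local-to-global convexity argument to patch over the faces then gives that $\phi(M)\cap\mathfrak t^*_+$ is closed, convex and polyhedral, and a polytope when $M$ is compact. Finally, this set is rational if and only if its affine span $\lambda+\mathfrak n^\circ$ is rational, which by Proposition~\ref{aspan} and the description of the null ideal holds if and only if $N$ is closed. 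Besides the normal form in (i), the remaining point needing care is that the leaves of $\mathcal F$ may fail to be closed; this is handled throughout by working with the local null subgroups $N_U$ for small $U$ and by Proposition~\ref{aspan}, and by substituting ``properness of $\phi$'' for ``compactness'' wherever the symplectic proof uses the latter.
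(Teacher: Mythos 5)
First, a point of orientation: the paper does not prove Theorem~\ref{conv} at all --- it is quoted from Lin and Sjamaar \cite{LS}, and the only ingredients of their argument that surface in this paper are the local normal form (Theorem~\ref{nform}, taken from \cite[Appendix C]{LS}), the Morse--Bott property of the components $\phi^\xi$ (\cite[Theorem 3.4.6]{LS}), and the cross-section Theorem~\ref{cross}, which the author proves independently. Measured against that, your outline has the right architecture and is essentially the one Lin--Sjamaar follow: a normal form for clean actions giving local polyhedral cones inside the affine span $\lambda+\mathfrak n^{\circ}$ of Proposition~\ref{aspan}, the abelian case, reduction of the nonabelian case via cross-sections, and rationality of the image tied to rationality of $\mathfrak n$, i.e.\ to closedness of $N$.

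There is, however, one concrete misstep in step (ii): you assert that for generic $\xi\in\mathfrak t$ the component $\phi^{\xi}$ is proper ``as $\phi$ is''. That is false in the noncompact setting covered by the theorem: $(\phi^{\xi})^{-1}([a,b])$ is the $\phi$-preimage of an unbounded slab, and properness of $\phi$ gives no control over it (e.g.\ $\phi(z_1,z_2)=(|z_1|^2,|z_2|^2)$ on $\C^2$ is proper while $|z_1|^2$ is not). Consequently the global Atiyah--Guillemin--Sternberg Morse-theoretic induction on level sets of $\phi^{\xi}$ cannot be run as stated when $M$ is only assumed to have a proper moment map; this is precisely why the actual proof (and the standard noncompact treatments it builds on, cf.\ \cite{LMTW}) replaces global Morse theory by a local-to-global convexity principle: local convexity data from the normal form, connectedness of local fibers, plus properness of the full map $\phi$. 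You do invoke such a patching argument in step (iii), so the fix is to make it the backbone of step (ii) as well, reserving the Morse-theoretic argument for compact $M$ (where it does yield the polytope statement). A smaller caveat of the same flavor arises in step (iii): the restriction of $\phi$ to a cross-section $\phi^{-1}(U)$ is in general only proper onto its image, a point the paper itself has to note when it uses Theorem~\ref{cross} (see the proof of Lemma~\ref{Gisom'}), so the induction over faces must be set up with that weaker properness.
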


Let us clarify some terms occurring in Theorem~\ref{conv}.
A convex polyhedral set in a finite-dimensional real vector space is the intersection of closed half-spaces that is locally finite. A convex polyhedron is the intersection of a finite number of closed half-spaces. A convex polytope is a bounded convex polyhedron.

\section{presymplectic $G$-manifolds and the proof of theorems~\ref{O} and \ref{A}}

In this section, we prove Theorems~\ref{O} and \ref{A}.

\smallskip

Let $(M, \omega)$ be a connected presymplectic Hamiltonian $G$-manifold with moment map $\phi$. Let $g$ be a $G$-invariant  Riemannian metric on $M$ compatible with $\omega$, which means that $g$ is an invariant metric such that on the symplectic subbundle $(T\mathcal F)^{\perp}$ perpendicular to $T\mathcal F$, $g$
is compatible with $\omega|_{(T\mathcal F)^{\perp}}$, i.e., for any
$X, Y\in (T\mathcal F)^{\perp}$, $g(X, Y) = \omega (JX, Y)$ for a $G$-invariant  almost complex structure $J$ on $(T\mathcal F)^{\perp}$  determined by $g$.  Let $\xi\in\mathfrak g$, and let $\phi^{\xi} = \langle\phi, \xi\rangle$ be a moment map component. Let $\grad (\phi^{\xi})$ be
the gradient vector field of $\phi^{\xi}$, i.e., for any $X\in TM$, 
$g(\grad (\phi^{\xi}), X) = d\phi^{\xi} (X)$. 
It is easy to check that
$$\grad (\phi^{\xi}) = - J \bar \xi_M,$$
 where $\bar \xi_M$ is the projected 
vector field of $\xi_M$ to $(T\mathcal F)^{\perp}$.

\medskip

Now we prove Theorem~\ref{O} in the Introduction.
\begin{theorem 1}
Let $(M, \omega)$ be a connected compact presymplectic Hamiltonian $G$-manifold. If the $G$-action is leafwise nontangent everywhere, then
the quotient map induces an isomorphism $\pi_1(M)\cong\pi_1(M/G)$.
\end{theorem 1}

\begin{proof}
First, since $G$ is connected, by the path lifting property (see \cite[Theorem 6.2 or Corollary 6.3]{B}), the quotient map induces a surjection
$$\pi_1(M)\twoheadrightarrow\pi_1(M/G).$$

To show that the quotient map induces an injection, we need to show that
if a loop in $M$ projects to a homotopically trivial loop in $M/G$, then the loop in $M$ is homotopically trivial. By the homotopy lifting property \cite[Theorem 7.3]{B}, we only need to show that the lifting of any constant loop (i.e., a point) in $M/G$, which is a loop in a $G$-orbit in $M$, is homotopically trivial in $M$. Let $G\cdot x$ be any $G$-orbit and $\alpha$ be a loop in this orbit.
Identify $G\cdot x$ with $G/H$, where $H$ is the stabilizer group of $x$. 
Then $\alpha$ lifts to a path in $G$ from
the identity to some $h\in H$. Let $T$ be the maximal torus of $G$ containing the identity and $h$. We can deform the path in $G$, fixing the endpoints, to a path
in $T$, and then deform the path in $T$ to the form $\exp (t\xi)$, where
$t\in [0, 1]$, $\xi\in\mathfrak t$, and $\exp\xi = h$. Then $\alpha$ is homotopic to the loop $\exp (t\xi)\cdot x$.  If this loop is not a constant, then along the flow of $\grad(\phi^{\xi})$ or
$-\grad(\phi^{\xi})$ with respect to some $G$-invariant metric, it can be deformed to a loop
$\exp (t\xi)\cdot x_0$ in the minimum set of $\phi^{\xi}$. Since for any point $m\in \exp (t\xi)\cdot x_0$, $d\phi^{\xi}(m) = 0$, so $T_m(\exp (t\xi)\cdot x_0)\in T_m\mathcal F$. Since the action is nontangent everywhere, $T_m(\exp (t\xi)\cdot x_0)=0$, hence $\exp (t\xi)\cdot x_0$ is a constant loop.
\end{proof}

\begin{remark}
Here are easy examples of leafwise nontangent actions.
Let $(M_1, \omega_1)$ be a connected compact symplectic manifold with a nontrivial symplectic (or Hamiltonian) action of $G$, and let $M_2$ be any connected compact smooth manifold with a trivial $G$-action. Let $M=M_1\times M_2$ with $\omega=\pi^*\omega_1$, where $\pi\colon M\to M_1$ is the projection. Then $(M, \omega)$ is a 
presymplectic manifold with a {\it nontrivial} action of $G$ which is leafwise nontangent everywhere. (Compare these examples with Proposition~\ref{nontang}.)
\end{remark}

Recall that if $M$ is a $G$-manifold, $M_{(H)}$ denotes the $(H)$-orbit type, and  all the $G$-orbits in $M_{(H)}$ are diffeomorphic to each other.
If $M_{(K)}$ is in the closure of $M_{(H)}$, then $(H)\subset (K)$, i.e., 
$H$ is conjugate to a subgroup of $K$. 
Let $O\subset M_{(K)}$ be a $G$-orbit, and suppose that a tubular neighborhood of $O$ intersects $M_{(H)}$. 
Then by the slice theorem, we can see that a tubular neighborhood $N_O$ of $O$ deformation retracts to $O$, so a loop  $\alpha'$ in a $G$-orbit $O'$ in $M_{(H)}\cap N_O$ can be deformed to a loop $\alpha$ in the orbit $O$, so that 
$[\alpha']=[\alpha]\in\pi_1(M)$.

\medskip

Now we are ready to prove Theorem~\ref{A}. Let us recall it first.

\begin{theorem 2}
Let $(M, \omega)$ be a connected compact presymplectic Hamiltonian $G$-manifold with a clean $G$-action. Let $N$ be the null subgroup. From each closed orbit type of the $N$-action, take an $N$-orbit 
$\mathcal O$. Let $\langle\im\big(\pi_1(\mathcal O)\big)\rangle$ be the normal subgroup of $\pi_1(M)$ generated by the images of the $\pi_1(\mathcal O)$'s in 
$\pi_1(M)$.
Then the quotient map induces an isomorphism $\pi_1(M)/\langle\im\big(\pi_1(\mathcal O)\big)\rangle \cong\pi_1(M/G)$. 
\end{theorem 2}

\begin{proof}
As before, since $G$ is connected, the map 
$$q_*\colon \pi_1(M)\to\pi_1(M/G)$$
 induced by the quotient map is surjective. Since 
for each $\mathcal O$, $\im\left(\pi_1(\mathcal O)\right)$ maps to the trivial
element by $q_*$, $\langle\im\left(\pi_1(\mathcal O)\right)\rangle$ maps to the trivial element by $q_*$,  so the map induced by the quotient
$$\pi_1(M)/\langle\im\left(\pi_1(\mathcal O)\right)\rangle\to\pi_1(M/G)$$ is well defined and surjective. This shows that $\langle\im\left(\pi_1(\mathcal O)\right)\rangle\subset \ker (q_*)$. To show 
$\ker (q_*)\subset \langle\im\left(\pi_1(\mathcal O)\right)\rangle$, by the homotopy lifting property \cite[Theorem 7.3]{B}, we only need to show that any loop in any $G$-orbit is homotopic
to a loop in some $\mathcal O$. Let $\alpha$ be a loop at $x$
in the $G$-orbit $G\cdot x$. 
As in the proof of Theorem~\ref{O}, we can deform $\alpha$ to a loop at $x$ of the form $\exp(t\xi)\cdot x$, where $t\in [0, 1]$, and $\xi\in\mathfrak t$; moreover, if $\exp(t\xi)\cdot x$ is not a constant, it can be further deformed to a loop  $\exp(t\xi)\cdot x_0$, either a constant or in the minimum set of $\phi^{\xi}$. Since for each point
$m\in\exp(t\xi)\cdot x_0$, $d\phi^{\xi}(m)=0$, we have $\xi_{M, m} \in T_m(N\cdot m) \subset T_m\mathcal F$ by cleanness of the action. So the loop $\exp(t\xi)\cdot x_0$ is in an 
$N$-orbit (which is in some orbit type of the $N$-action). By the paragraph above
the statement of Theorem~\ref{A},
the loop  $\exp(t\xi)\cdot x_0$ can be further deformed to a loop in an $N$-orbit $\mathcal O$ lying in a closed orbit type of the $N$-action.
\end{proof}

For the special case of compact symplectic manifolds, these arguments shortens those in \cite{L0}. 

For the case of noncompact connected presymplectic Hamiltonian $G$-manifolds, in certain cases, one can modify the statements from the
compact case.

\section{contact $G$-manifolds and the proof of Theorem~\ref{B}}

In this section, we study contact $G$-manifolds and prove Theorem~\ref{B}.

\begin{lemma}\label{nullact}
Let $(M, \alpha)$ be a connected contact $G$-manifold. 
Then for each $\xi\in\mathfrak n$, either $\xi_{M, m}=0$ for all $m\in M$ or $\xi_{M, m}\neq 0$ for all $m\in M$. Here, 
$\mathfrak n$ is the null ideal, and $\xi_M$ is the vector field on $M$ generated by $\xi$.
\end{lemma}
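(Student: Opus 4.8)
The plan is to use the contact moment map together with the equivariance of $\phi$ and the connectedness of $M$. Recall that for $\xi \in \mathfrak{n}$ we have $\xi_M \in T\mathcal{F}$, where $\mathcal{F}$ is the null foliation, whose leaves are the Reeb orbits; equivalently $\phi^\xi = \langle \phi, \xi\rangle = \alpha(\xi_M)$ is locally constant on $M$, hence \emph{constant} since $M$ is connected. So there is a real number $c$ with $\alpha(\xi_{M,m}) = c$ for all $m \in M$.

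First I would observe that $\xi_{M,m}$ always lies in the subspace $T_m\mathcal{F} = \ker(d\alpha_m|_{\ker\alpha_m})$, i.e. in the Reeb line $\R R_m$ spanned by the Reeb vector field $R$ of $\alpha$ (for a contact form, the kernel of $d\alpha$ is exactly the span of the Reeb field). Indeed $i(\xi_M)\,d\alpha = d\phi^\xi = 0$ since $\phi^\xi$ is constant, so $\xi_{M,m} \in \ker(d\alpha_m)$, which is the one-dimensional Reeb line. Therefore $\xi_{M,m} = f(m)\,R_m$ for some smooth function $f\colon M \to \R$. Pairing with $\alpha$ and using $\alpha(R) = 1$ gives $f(m) = \alpha(\xi_{M,m}) = \phi^\xi(m) = c$, a constant. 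Hence $\xi_M = c\,R$ identically on $M$.

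Now the dichotomy is immediate: if $c = 0$ then $\xi_{M,m} = 0$ for every $m \in M$, and if $c \neq 0$ then $\xi_{M,m} = c\,R_m \neq 0$ for every $m \in M$, since the Reeb vector field is nowhere vanishing. This is exactly the claimed statement. In fact the argument shows slightly more — that on the Reeb-type branch every $\xi \in \mathfrak{n}$ generating a nonzero vector field generates a (constant multiple of the) Reeb flow — which is presumably what gets used in the proof of Theorem~\ref{B}.

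The only subtle point, and the one I would state carefully rather than the source of any real difficulty, is the identification $\ker(d\alpha_m) = \R R_m$: this holds because $\alpha$ is a \emph{contact} form, so $d\alpha$ restricted to the contact hyperplane $\ker\alpha_m$ is nondegenerate and $R_m \notin \ker\alpha_m$, forcing $\ker(d\alpha_m)$ to be the line $\R R_m$ complementary to $\ker\alpha_m$. Everything else is a one-line computation using $i(\xi_M)\,d\alpha = d\phi^\xi$, the equivariance/definition $\phi^\xi = \alpha(\xi_M)$, and connectedness of $M$ to upgrade ``locally constant'' to ``constant.'' So I do not anticipate a genuine obstacle here; the main thing is to present the reduction $\xi_M = (\text{const})\cdot R$ cleanly.
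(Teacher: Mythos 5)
Your proof is correct and takes essentially the same route as the paper: $\phi^{\xi}=\alpha(\xi_M)$ is constant by connectedness of $M$, and since $\xi_{M,m}$ lies in the Reeb line with $\alpha(R)=1$, this constant detects exactly whether $\xi_M$ vanishes identically or nowhere (you merely make explicit the identification $\ker(d\alpha_m)=\R R_m$, which the paper leaves implicit). One small notational slip: $T_m\mathcal F$ is $\ker(d\alpha_m)$ inside $T_mM$, not $\ker\bigl(d\alpha_m|_{\ker\alpha_m}\bigr)$ (the latter is zero by nondegeneracy), but your subsequent argument uses the correct space, so nothing is affected.
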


\begin{proof}
If $\mathfrak n =0$, the claim is trivial.
Now assume $\mathfrak n \neq 0$, and let $\xi\in \mathfrak n$. Then by the definition of $\mathfrak n$,  $\xi_M$ is tangent
to the Reeb orbit  (the leaf of $\ker(\omega=d\alpha)$) at any point, so 
$\phi^{\xi}$ is a constant on $M$ ($M$ is connected). While for any point $m\in M$, 
$\phi^{\xi}(m)=\alpha_m (\xi_{M, m})$, so either $\xi_{M, m}=0$ or $\xi_{M, m}\neq 0$ for all $m$.
\end{proof}

The next result says that on compact contact $G$-manifolds, 
if the action is leafwise nontangent everywhere, then the action is trivial.

\begin{proposition}\label{nontang}
Let $(M, \alpha)$ be a connected compact contact $G$-manifold.
If the action is leafwise nontangent everywhere, then the action is trivial.
\end{proposition}

\begin{proof}
Let $\phi$ be the moment map. By Theorem \ref{conv}, $\phi(M)\cap\mathfrak t^*_+$ is a convex polytope. Choose a $G$-invariant metric on 
$\mathfrak g^*$. Let $v$ be a vertex on the polytope furthest from the origin. Let $\phi_T$ be the moment map for the $T$-action. 
Then the image of $\phi_T$ is the orthogonal projection to $\mathfrak t^*$ of the image of $\phi$.
Then $v$ is an extremal value of $\phi_T$ and $\phi^{-1}(v)=\phi_T^{-1}(v)$.  For each $\xi\in\mathfrak t$, and each 
$m\in\phi_T^{-1}(v)$, $d\phi^{\xi}(m) =0$, so $\xi_M(m)\in T_m\mathcal F$, which implies 
$\xi_M(m) = 0$ since the action is nontangent everywhere. 
Then $\phi^{\xi} (m) = \alpha (\xi_M)(m)=0$ for all $\xi\in\mathfrak t$ and all $m\in\phi_T^{-1}(v)$, which means that $\phi_T \big(\phi_T^{-1}(v)\big) = v = 0$. By the choice of $v$, we have $\phi(M)=0$. Since each point is fixed by
the $T$-action, each point is fixed by the $G$-action.
\end{proof}

\begin{proposition}\label{p1}
Let $(M, \alpha)$ be a connected compact contact $G$-manifold with a clean $G$-action. Then the action is either trivial or is of Reeb type.
\end{proposition}

\begin{proof}
Since the action is clean, $T_m(G\cdot m)\cap T_m\mathcal F = T_m(N\cdot m)$ at each $m\in M$. By Lemma~\ref{nullact},
the action  is either leafwise nontangent everywhere or is of Reeb type (leafwise transitive everywhere). By Proposition~\ref{nontang}, the action is trivial when it is leafwise nontangent everywhere. 
\end{proof}

\begin{remark}\label{r1}
Note that when a $G$-action on a contact manifold is of Reeb type, the null subgroup (which is non trivial) is a normal subgroup of $G$.
For example, $SU(2)$ (or $SO(3)$) has no nontrivial connected normal subgroups, 
so $SU(2)$ (or $SO(3)$) cannot act on a  contact manifold which is of Reeb type. By Proposition~\ref{p1}, there are no nontrivial clean $SU(2)$ (or $SO(3)$) actions on compact contact manifolds.
\end{remark}

We next recall Theorem~\ref{B} and give its proof.

\begin{theorem 3}
Let $(M, \alpha)$ be a connected compact contact manifold with a clean
$G$-action. Then the action is either trivial or is
of Reeb type. 
When the action is of Reeb type, perturb the contact form to an invariant contact form so that its Reeb orbits are generated by a circle subgroup $S^1\subset G$, and let 
$m=\lcm\big\{k\,|\, \Z_k\subset S^1 \,\mbox{is a stabilizer group}\big\}$, then 
$S^1/\Z_m$ corresponds to a loop $\mathcal O$ in $M$. Let  
$\langle\im\big(\pi_1(\mathcal O)\big)\rangle$ be the normal subgroup of $\pi_1(M)$ generated by
the image of $\pi_1(\mathcal O)$ in $\pi_1(M)$. Then the quotient map induces an isomorphism
$\pi_1(M)/\langle\im\big(\pi_1(\mathcal O)\big)\rangle \cong \pi_1(M/G)$.  
\end{theorem 3}

\begin{proof}
By Proposition~\ref{p1},  the action is either trivial or is of Reeb type.
Now assume the action is of Reeb type, and the contact form is perturbed as stated in the theorem, we will show the rest of the claims.

As before, since $G$ is connected, the quotient map $q\colon M\to M/G$
induces a surjection
$$q_*\colon \pi_1(M)\twoheadrightarrow \pi_1(M/G).$$
If the null subgroup $S^1$ acts freely, then $m=1$, and  $S^1$ corresponds to a Reeb orbit which is a loop $\mathcal O$. Clearly, $\im(\pi_1(\mathcal O))\subset\ker(q_*)$. Since $\ker(q_*)$ is a normal subgroup, $\langle\im(\pi_1(\mathcal O))\rangle\subset\ker(q_*)$.
Now assume $S^1$ acts locally freely. If $\Z_k$ is a stabilizer group for the $S^1$-action, then $S^1/\Z_k$ represents a Reeb orbit. Since $M$ is compact, we have finitely many distinct stabilizer groups $\Z_{k_i}$s' for the Reeb orbits, 
where $i=0, 1, \cdots, l$. Then $\frac{m}{k_i}$ are relatively prime, so
there are integers $a_0, \cdots, a_l$, such that
$$\sum_{i=0}^l a_i \frac{1}{k_i} = \frac{1}{m}.$$
For $i=0, \cdots, l$,
let $\mathcal O_i\approx S^1/\Z_{k_i}$ be a Reeb orbit,  and let $x_i\in\mathcal O_i$ be a point. Let $\beta_i$ be a path from $x_0$ to $x_i$. Let   
$$\mathcal O =\big(\mathcal O_0^{a_0}\big)\cdot\big(\beta_1\cdot \mathcal O_1^{a_1}\cdot\beta_1^{-1}\big)\cdot \big(\beta_2\cdot \mathcal O_2^{a_2}\cdot\beta_2^{-1}\big)\cdot\cdots\cdot\big(\beta_l\cdot\mathcal O_l^{a_l}\cdot\beta_l^{-1}\big).$$
Then $\mathcal O$ is a loop based at $x_0$, it is parametrized by $S^1/\Z_m$.
Clearly, $q_*\big((\pi_1(\mathcal O_i)\big) = 0$. So $q_*\big((\pi_1(\mathcal O)\big) = 0$, hence $\im\big(\pi_1(\mathcal O)\big)\subset \ker (q_*)$. Hence
$\langle\im\big(\pi_1(\mathcal O)\big)\rangle\subset \ker (q_*)$ since $\ker (q_*)$
is normal. Therefore, the quotient map induces a surjective homomorphism
$$\pi_1(M)/\langle\im\big(\pi_1(\mathcal O)\big)\rangle\twoheadrightarrow\pi_1(M/G).$$ 
To show that  $\ker (q_*)\subset \langle\im\big(\pi_1(\mathcal O)\big)\rangle$, by the homotopy lifting property  \cite[Theorem 7.3]{B}, we need to show that any loop in any $G$-orbit is homotopic to a loop in $\mathcal O$. By Theorem~\ref{A}, the loop in the $G$-orbit is homotopic to a loop in some $\mathcal O_i$. We claim that the generator of $\pi_1(\mathcal O_i)$ is a multiple of the generator of $\pi_1(\mathcal O)$, hence $\ker (q_*)\subset \langle\im\big(\pi_1(\mathcal O)\big)\rangle$ follows.
The claim can be shown in the following way. We consider the map
$$(q_1)_*\colon \pi_1(M)\to \pi_1(M/\Z_m)$$
induced by the quotient $q_1\colon M\to M/\Z_m$ (not a covering!).
This map is injective. To see this, by the homotopy lifting property, we only need to show that the lift of any constant loop in $M/\Z_m$ is homotopically trivial in $M$. This is clear since the lift is a point in a $\Z_m$-orbit. The group $S^1/\Z_m$
acts freely on $M/\Z_m$, any Reeb orbit in $M/\Z_m$ is parametrized by 
$S^1/\Z_m$. It is clear that  $(q_1)_*([\mathcal O])$ is a Reeb orbit in $M/\Z_m$.
Since $(q_1)_*([\mathcal O_i])$ is a multiple of $(q_1)_*([\mathcal O])$, $[\mathcal O_i]$ is the same multiple of  $[\mathcal O]$.
\end{proof}

Recall that a {\bf contact  toric} manifold is a contact manifold of dimension
$2n+1$ with an effective $T^{n+1}$-action. Contact toric manifolds are the odd dimensional analog of symplectic toric manifolds.

\begin{example}\label{ex-contact}
In \cite{L1}, we have shown that a connected compact contact toric manifold $M$ of Reeb type has finite cyclic fundamental group $\Z_k$ for some $k\geq 1$.
Suppose that the contact form is chosen so that its Reeb orbit is generated by
an $S^1$-action. For simplicity, we assume that $S^1$ acts freely. Then we have an
$S^1$ principal bundle $M\to M/S^1$, and the long exact sequence
$$\cdots\to \pi_2(M/S^1)\overset{\partial}{\to} \pi_1(S^1)\to \pi_1(M)\to\pi_1(M/S^1)\to \pi_0(S^1)=0\to\cdots.$$
We see that $\pi_1(M)/\im\big(\pi_1(S^1)\big) = \pi_1(M/S^1)$. Since $M/S^1$
is a compact connected symplectic toric manifold, $\pi_1(M/S^1)=0$. Since $M/T$ is contractible, 
$\pi_1(M/T) = 0$. So we have 
$$\pi_1(M)/\im\big(\pi_1(S^1)\big) = \pi_1(M/T) = 0.$$
Here, $\im\big(\pi_1(S^1)\big)=\im\big(\pi_1(\mathcal O)\big)$, where $\mathcal O$ is a Reeb orbit, parametrized by $S^1$.
Since $\pi_1(M) = \Z_k$, we have 
$$\im\big(\pi_1(S^1)\big) = \im\big(\pi_1(\mathcal O)\big) = \Z_k.$$ 

For the above case  that the Reeb orbits are free $S^1$-orbits, we can also see in the following way  that $\pi_1(M)$ is finite cyclic.
By the long exact sequence above, we have 
$$\pi_1(M) = \pi_1(S^1)/\im\big(\partial \big(\pi_2(M/S^1)\big)\big),$$ 
where $\pi_2(M/S^1) \cong H_2(M/S^1; \Z)$ since $\pi_1(M/S^1)=0$. Since $M/S^1$ is a compact symplectic manifold, $H_2(M/S^1; \Z)$ has dimension at least $1$; $H_2(M/S^1; \Z)$ has no torsion since $M/S^1$ is toric. If the Euler class 
of the $S^1$ bundle $M\to M/S^1$ is $e = a_1 t_1 + \cdots + a_l t_l$, where
$a_i\in \Z$ for all $i$, and $\{t_1, \cdots t_l\}$ is a basis of $H_2(M/S^1; \Z)$, then
$\im\big(\partial \big(\pi_2(M/S^1)\big)\big) = k\Z$, where $k=\gcd(a_1, \cdots, a_l)$.
So $\pi_1(M) = \Z_k$.
\end{example}

\section{proof of Theorem~\ref{C} when the $G$-action is leafwise transitive}

We observe that when the $G$-action is leafwise transitive everywhere, we can use the symplectization of the presymplectic $G$-manifold and previous results on 
symplectic manifolds to prove Theorem~\ref{C}. The reason is that when the Hamiltonian $G$-action is leafwise transitive everywhere on the presymplectic manifold with a proper moment map, the moment map on its symplectization is  proper onto its image.
In general case, however, I do not know how to do it  this way.

 We now introduce the symplectization of a presymplectic $G$-manifold 
$(M, \omega)$.
Let $T^*\mathcal F$ be the dual bundle of $T\mathcal F$ and let $\pi\colon 
T^*\mathcal F\to M$ be the projection. We choose a $G$-invariant Riemannian metric on $M$. Let $TM\to T\mathcal F$ be the orthogonal 
projection, and let $j\colon T^*\mathcal F\to T^*M$ be the dual embedding. 
Let $\omega_0$ be the standard symplectic form on $T^*M$, and let
$$\Omega = \pi^*\omega + j^*\omega_0.$$ 
Then $\Omega$ is symplectic 
in a neighborhood of $M$ in $T^*\mathcal F$. The $G$-action lifts to
$T^*M$ and to $T^*\mathcal F$. The germ at $M$ of the $G$-manifold $T^*\mathcal F$ is called the {\bf symplectization of $M$}, denoted {\bf $X$}. 
The lifted $G$-action on $T^*M$ is Hamiltonian with moment map 
$\phi_0\colon T^*M\to\mathfrak g^*$
given by
\begin{equation}\label{phi0} 
\langle\phi_0(m, y), \xi\rangle = \langle y, \xi_M(m)\rangle,
\end{equation} 
where $m\in M$, $y\in T^*_m M$, $\xi\in \mathfrak g$, and $\xi_M$ is the vector field on $M$ generated by $\xi$.
Moreover, if the $G$-action on $M$ is Hamiltonian with moment map $\phi$, then the lifted $G$-action on $X$ is Hamiltonian, with moment map
\begin{equation}\label{momentS}
\psi = \pi^*\phi + j^*\phi_0.
\end{equation}

\begin{lemma}\label{X}
Let $M$ be a connected presymplectic manifold with a clean Hamiltonian $G$-action and moment map $\phi$. Let $X$ be the symplectization of $M$, and $\psi$ be the moment map for the lifted $G$-action on $X$. Then for any $(m, y)\in X$ with 
$m\in M$ and $y\in T_m^*\mathcal F$, we have 
$$\psi(m, y) - \phi(m) = \Lambda_m (y),$$ 
where $\Lambda_m\colon T^*_m\mathcal F \to \mathfrak g^*$ 
is a linear map such that
\begin{itemize}
\item $\ker(\Lambda_m) = \{ y\in T^*_m\mathcal F\,|\, y|_{T_m(N\cdot m)} = 0\}$, and
\item $\im(\Lambda_m) \cap \mathfrak n^{\circ} = 0$.
\end{itemize}
Here, $N$ is the null subgroup and $\mathfrak n$ is the null ideal.
\end{lemma}

\begin{proof}
By (\ref{momentS}) and (\ref{phi0}), $\psi^{\xi}(m, y) - \phi^{\xi}(m) = \langle y, \xi_M (m)\rangle$ for any $\xi\in\mathfrak g$. We write 
$$\langle y, \xi_M (m)\rangle = \Lambda_m^{\xi}(y)= \langle\Lambda_m (y), \xi   \rangle,$$
then $\Lambda_m (y)$ is linear on $y$.
Then $y\in T^*_m\mathcal F$ is in $\ker(\Lambda_m)$ if and only if
$\langle y, \xi_M (m)\rangle=0$ for all $\xi\in\mathfrak g$, and if and only if $y\in \big(T_m(N\cdot m)\big)^{\circ}$ since
the action is clean.
Now suppose $\Lambda_m (y)\in \im(\Lambda_m) \cap \mathfrak n^{\circ}$. Then $\Lambda_m^{\xi} (y) = \langle y, \xi_M (m)\rangle = 0$ for all
$\xi\in \mathfrak n$. Since $y\in T^*_m\mathcal F$, $\langle y, \xi_M (m)\rangle = 0$ for all $\xi\in\mathfrak g$, hence $\Lambda_m (y) = 0$.
\end{proof}

\begin{lemma}\label{symp}
Let $(M,\omega)$ be a connected presymplectic Hamiltonian $G$-manifold
with proper moment map $\phi$. Suppose that the $G$-action is leafwise transitive
everywhere. Then the moment map of the 
$G$ action on the symplectization $X$ of $M$ is proper onto its image.
\end{lemma}

\begin{proof}
By Lemma~\ref{X} and Proposition~\ref{aspan}, the moment map of the $G$-action on $X$ is 
$$\psi (m, y) = \phi (m) + \Lambda_m(y)\colon X\to \mathfrak n^{\circ}\oplus \mathfrak n^*.$$
By assumption, $\phi$ is proper. The fact that the action is leafwise transitive implies that $\ker(\Lambda_m) = 0$, so $\Lambda_m$ is proper onto its image.
So $\psi\colon X\to \mathfrak g^*$ is proper onto its image.
\end{proof}

A special case for Theorem~\ref{C} is as follows.
\begin{theorem}\label{C'}
Let $(M,\omega)$ be a connected presymplectic Hamiltonian $G$-manifold
with proper moment map $\phi$. Suppose that the $G$-action is leafwise transitive
everywhere. Then $\pi_1(M/G) = \pi_1(M_a)$ for all $a\in\phi(M)$.
\end{theorem}

\begin{proof}
By Lemma~\ref{symp}, the moment map of the $G$-action on $X$ is proper onto its image. By the symplectic case \cite[Theorem 1.5 and Remark 1.7]{L},
$$\pi_1(X/G) = \pi_1(X_a)\quad\mbox{for any $a\in\psi(X)$}.$$
It is clear that $X/G$  is homotopy equivalent to $M/G$. Hence
$$ \pi_1(X/G) = \pi_1(M/G).$$
 If $a\in\phi(M)$, then $a\in\psi(X)$, and $\psi^{-1}(a)=\phi^{-1}(a)$, so
 $X_a = M_a$. Therefore 
$$\pi_1(X_a) = \pi_1(M_a)\quad\mbox{for any $a\in\phi(M)$}.$$
The claim follows.
\end{proof}

\begin{remark}
In the proof above, the symplectic manifold $X$ is not compact, we may not have $\pi_1(X) = \pi_1(X/G)$.
\end{remark}

\section{the local normal form theorem, the cross section theorem, and a convergence theorem}

For the purpose of proving Theorem~\ref{C}, in this section, we address three important theorems for presymplectic Hamiltonian $G$-actions.

\medskip

First let us describe the local normal form theorem for a presymplectic Hamiltonian $G$-manifold $(M, \omega)$ with a clean $G$-action and moment map $\phi$.
 Let $x\in M$, $H$ be the stabilizer group of $x$, and $G_{\phi(x)}$ be the stabilizer group of $\phi(x)$ under the coadjoint action. By the equivariance of $\phi$, $H\subseteq G_{\phi(x)}$. Let 
$\mathfrak h = $ Lie$(H)$, and $\mathfrak g_{\phi(x)}=$ Lie$(G_{\phi(x)})$ be the Lie algebras. Let  $\mathfrak m = \mathfrak g_{\phi(x)}/\mathfrak h$. Let  $\mathfrak p = \mathfrak n/(\mathfrak n\cap\mathfrak h)$,  where $\mathfrak n$ is the null ideal. By Proposition~\ref{aspan}, $\mathfrak n\subset \mathfrak g_{\phi(x)}$, so we may take  $\mathfrak q = \mathfrak m/\mathfrak p$, let $\mathfrak q^*$ be the dual of $\mathfrak q$. Using these notations, we can describe 
the local normal form theorem as follows, the theorem is established in \cite[Appendix C]{LS}.
\begin{theorem} \label{nform} (The local normal form)
Let $(M, \omega)$ be a presymplectic Hamiltonian $G$-manifold with a clean $G$-action and moment map $\phi$.  Let $x\in M$, with stabilizer group $H$.
Then a $G$-invariant neighborhood
of the orbit $G\cdot x$ in $M$ is isomorphic as a presymplectic Hamiltonian $G$-manifold to 
$$A = G\times_H (\mathfrak q^*\times S\times V),$$ 
where $S$ is the ``symplectic slice" on which $H$ acts symplectically, $V$ is the ``null slice", which is a linear $H$-invariant subspace of $T_x \mathcal F$. The moment map on $A$ is
$\phi_A ([g, a, s, v]) = Ad(g)^*\big(\phi(x)+a+\psi(s)\big)$, where $\psi$ is the moment map for the $H$-action on $S$.
\end{theorem}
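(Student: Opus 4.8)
The plan is to establish the local normal form (Theorem~\ref{nform}) by adapting the Marle--Guillemin--Sternberg normal form from the symplectic category, working transverse to the null foliation. First I would set up the splitting of the tangent space $T_xM$ at the chosen point $x$, using the $G$-invariant compatible metric $g$ introduced before Proposition~\ref{mmv}. Since the $G$-action is clean at $x$, Theorem~\ref{LS} gives $T_x(N\cdot x) = T_x(G\cdot x)\cap T_x\mathcal F$, and this is exactly what lets the orbit direction, the null direction, and the symplectic-slice direction be separated consistently. Concretely, I would write $T_x\mathcal F = V_0 \oplus V$ where $V$ is an $H$-invariant complement to $T_x(N\cdot x)\subset T_x\mathcal F$ — this is the ``null slice'' — and on the symplectic subbundle $(T\mathcal F)^\perp$ I would run the usual symplectic slice decomposition for the (genuinely symplectic) form $\omega|_{(T\mathcal F)^\perp}$: namely $(T_x\mathcal F)^\perp = T_x(G\cdot x)\cap(T_x\mathcal F)^\perp \oplus W \oplus S$, with $S$ the ``symplectic slice'' carrying the induced symplectic $H$-action and moment map $\psi$, and the $G$-orbit/coadjoint-orbit pieces accounting for the $\mathfrak q^*$ factor. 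The dimension bookkeeping here is where the definitions $\mathfrak m = \mathfrak g_{\phi(x)}/\mathfrak h$, $\mathfrak p = \mathfrak n/(\mathfrak n\cap\mathfrak h)$, $\mathfrak q = \mathfrak m/\mathfrak p$ get used: $\mathfrak p$ measures the null directions inside the isotropy-algebra orbit directions, so $\mathfrak q^*$ is precisely the ``extra'' flat moment-map directions not already seen by $S$ or $V$.

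Next I would construct the isomorphism. Using an equivariant tubular neighborhood / slice theorem for the compact group $G$, a $G$-invariant neighborhood of $G\cdot x$ is $G$-equivariantly diffeomorphic to $G\times_H (\mathfrak q^*\times S\times V)$ as a manifold; the real content is that one can choose this diffeomorphism so that it pulls back $\omega$ to the model presymplectic form and $\phi$ to $\phi_A([g,a,s,v]) = Ad(g)^*(\phi(x)+a+\psi(s))$. I would obtain this by an equivariant Moser / Darboux argument applied leafwise-transversally: on the model, equip $\mathfrak q^*\times S\times V$ with the constant form that is the linear symplectic form on $S$ plus zero on the $\mathfrak q^*$ and $V$ directions (so $V$ really is inside the kernel), induce a closed $2$-form on $G\times_H(\cdots)$ via the standard minimal-coupling/Sternberg construction, and check that both this model form and the pullback of $\omega$ are closed, $G$-invariant, have the same kernel distribution along the zero section, and have the same value and the same moment map there. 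An equivariant Moser argument — valid because the leaf directions can be handled by the clean condition and the remaining transverse form is honestly symplectic — then interpolates them near $G\cdot x$. The moment map formula falls out of equivariance plus the fact that along $\mathfrak q^*$ the form is degenerate (so those coordinates contribute the affine $a$-term, consistent with Proposition~\ref{aspan}) and along $S$ it is the slice moment map $\psi$.

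The main obstacle I expect is precisely the interface between the foliated (degenerate) directions and the symplectic directions: one must verify that the null slice $V$ can be chosen $H$-invariantly so that it remains in the kernel of the model form \emph{and} so that the cleanness identity $T_x(N\cdot m)=T_x(G\cdot m)\cap T_m\mathcal F$ propagates to nearby points of the model — i.e., that the $G$-action on $A$ is again clean with the same null subgroup. Equivalently, one needs the null foliation of the model $2$-form to be exactly $G\times_H(\mathfrak q^*\times \{0\}\times V)$ fibered appropriately, and this requires the coupling form construction to not accidentally pair $V$ with the orbit directions. This is controlled by choosing the connection in the minimal-coupling construction to be compatible with the splitting, but checking it carefully is the delicate point. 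Since the paper attributes the statement to \cite[Appendix C]{LS}, I would in practice cite that reference for the full verification and only indicate the structure above; the remaining steps (equivariance of $\phi_A$, the explicit formula, reduction of the isotropy representation) are then routine consequences of the symplectic MGS normal form applied to the $S$-factor together with the triviality of the form on $\mathfrak q^*\times V$.
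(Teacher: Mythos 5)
The paper does not actually prove Theorem~\ref{nform} itself---it states it and cites \cite[Appendix C]{LS}---and your proposal, which sketches the standard Marle--Guillemin--Sternberg adaptation (clean splitting of $T_xM$ into orbit, null-slice and symplectic-slice directions, the slice theorem, minimal coupling plus a constant-rank equivariant Moser argument) and then defers to that same reference for the delicate verification at the degenerate/symplectic interface, follows essentially the same route. Your outline is a reasonable description of what the cited proof carries out, so there is nothing further to reconcile with the paper.
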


Note that the ``null slice" $V$ has no contribution to the moment map image.

In the local normal form theorem for a symplectic Hamiltonian $G$-manifold,  there is no ``null slice" $V$,  and in place of  $\mathfrak q^*$ above, it is $\mathfrak m^*$, the dual space of the above $\mathfrak m$, see
\cite{GS, M}.

\medskip

Next, similar to the symplectic Hamiltonian $G$-action case, we establish
the cross section theorem for presymplectic Hamiltonian $G$-actions, where $G$ is nonabelian.

Suppose that a Lie group $G$ acts on a manifold $M$. Given a point $m$ in $M$ with stabilizer group $G_m$,
   a submanifold $U\subset M$ containing $m$ is called a
   {\bf slice at m} if $U$ is $G_m$-invariant, $G\cdot U$ is a neighborhood of $m$, and the map
 $$G\times_{G_m}U \longrightarrow G\cdot U,\,\,\, \mbox{with $[g, u]\longrightarrow g\cdot u$},$$
is an isomorphism.  For instance,  consider the coadjoint action of $G$ on $\mathfrak g^*$. Let  $a\in \mathfrak t^*_+$. Let $\tau$ be the open face of  
$\mathfrak t^*_+ $ containing
 $a$  and let $G_a$ be the stabilizer group of $a$. Since all the points on $\tau$ have the same stabilizer group,
 we also use $G_{\tau}$ to denote $G_a$. Then the natural  slice at $a$ is
$U = G_a\cdot\big\{b\in \mathfrak t^*_+ \,|\,G_b\subset
G_a\big\}=G_a\cdot\bigcup_{\tau\subset\overline{\tau'}}\tau'$,
   and it is an open subset of $\mathfrak g_{\tau}^*=\mathfrak g_a^*$.\\

We have the following cross section theorem. The cross section theorem 
in the symplectic case is due to Guillemin and Sternberg \cite[Theorem 26.7]{GS1},
also see \cite[Cor. 2.3.6]{GLS} and \cite[Theorem 3.8]{LMTW}.

\begin{theorem}\label{cross}(The cross section)
 Let $(M, \omega)$ be a connected presymplectic Hamiltonian $G$-manifold with moment map $\phi$. Let $a\in\phi(M)\cap \mathfrak t^*_+$, let $U$ be the natural slice at $a$, and $G_a$ be the stabilizer group of $a$. Then the {\bf cross section} $R=\phi^{-1}(U)$ is a $G_a$-invariant presymplectic submanifold of $M$, and every leaf of $R$ is a leaf of $M$; the restriction $\phi|_R$ is a moment map for the action of $G_a$ on $R$. If the $G$-action is clean on $M$, then the $G_a$-action is clean on $R$.
\end{theorem}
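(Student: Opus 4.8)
The plan is to reduce to the local normal form and to use the connectedness of the moment fibers. First I would show that $R=\phi^{-1}(U)$ is a submanifold. Since $U$ is an open subset of $\mathfrak g_a^*$ and is $G_a$-invariant, and since $\phi$ is $G$-equivariant, $R$ is clearly $G_a$-invariant; the content is smoothness. For this I would verify that $a$ (and every point of $U$) is a regular value of $\phi$ in the appropriate sense, using the local normal form Theorem~\ref{nform}: near a point $x\in\phi^{-1}(U)$ with stabilizer $H$, a $G$-invariant neighborhood of $G\cdot x$ is $A=G\times_H(\mathfrak q^*\times S\times V)$ with moment map $\phi_A([g,a',s,v])=\operatorname{Ad}(g)^*(\phi(x)+a'+\psi(s))$. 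From this explicit formula one reads off that $\phi_A^{-1}(U)$ is cut out transversally inside $A$ (the $\mathfrak q^*$-factor maps isomorphically onto a complement of $\mathfrak g_a^*$ near $\phi(x)$, and $U$ is open in $\mathfrak g_a^*$), so $\phi^{-1}(U)$ is a submanifold. The same local computation shows that $R$ contains, for each of its points $x$, the whole null slice $V\subset T_x\mathcal F$, i.e.\ $R$ is a union of leaves of $\mathcal F$ and $T_x R\supset T_x\mathcal F$; hence $\omega|_R$ is closed of constant rank with the same kernel distribution $\ker(\omega|_R)=\ker\omega\cap TR=T\mathcal F|_R$, so $(R,\omega|_R)$ is presymplectic with the same null foliation as $M$.

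Next I would check that $\phi|_R$ is a moment map for the residual $G_a$-action on $R$. Equivariance is inherited from $\phi$. For the defining identity $i(\xi_R)(\omega|_R)=d\langle\phi|_R,\xi\rangle$ for $\xi\in\mathfrak g_a$: the vector field $\xi_M$ is tangent to $R$ for $\xi\in\mathfrak g_a$ (again visible from the normal form, since $G_a$ preserves $U$), so $\xi_R=\xi_M|_R$, and restricting the identity $i(\xi_M)\omega=d\langle\phi,\xi\rangle$ to $TR$ gives the claim directly. This step is essentially formal once $R$ is known to be a presymplectic submanifold with $\xi_M$ tangent to it.

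The main obstacle, and the step requiring genuine care, is proving that $R$ is \textbf{connected}. The slice $U$ is connected and meets $\mathfrak t^*_+$ in a neighborhood of $a$, but $\phi^{-1}(U)$ could a priori be disconnected. Here I would invoke Theorem~\ref{conv}: under the clean hypothesis (with $\phi$ proper, which I would assume as in the cross-section setting, or argue locally), the fibers of $\phi$ are connected. Combined with the convexity/fiber-connectedness package, one argues as in the symplectic case (Guillemin--Sternberg) that the preimage of the connected convex relatively-open set $U\cap\mathfrak t^*_+$ under $\phi$, saturated by $G_a$, is connected: cover $U$ by the $G_a$-translates of a convex piece of $\mathfrak t^*_+$, use connectedness of each fiber $\phi^{-1}(b)$, and use that the union is $G_a$-saturated with $G_a$ connected. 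I expect the bookkeeping matching the presymplectic normal form (keeping track of the extra null directions $\mathfrak p$, $\mathfrak q$, and $V$, which do not affect the moment image by the remark after Theorem~\ref{nform}) to be the only place where the presymplectic proof genuinely differs from the symplectic one, and it is precisely where Theorem~\ref{nform} and Theorem~\ref{conv} do the work.
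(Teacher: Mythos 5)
The main assertion of the theorem is not connectedness of $R$ (the statement does not claim this, and your appeal to Theorem~\ref{conv} there would in any case require properness of $\phi$, which is not assumed in Theorem~\ref{cross}); it is that $R$ is a presymplectic submanifold of the \emph{same corank}, i.e.\ $\ker\big(\omega|_{T_xR}\big)=T_x\mathcal F$ for every $x\in R$. This is where your argument has a genuine gap: from the normal form you conclude $T_x\mathcal F\subset T_xR$ and then simply assert $\ker(\omega|_R)=\ker\omega\cap TR=T\mathcal F|_R$. Containing the null directions only gives the inclusion $T_x\mathcal F\subset\ker\big(\omega|_{T_xR}\big)$; the kernel could a priori be strictly larger (a vector of $T_xR$ may pair nontrivially with $T_xM$ yet trivially with all of $T_xR$), and nothing in your sketch rules this out or even shows that $\omega|_R$ has constant rank. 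The paper's proof is devoted precisely to this point: it establishes the splitting $T_xM=T_xR\oplus\mathfrak m_{M,x}$, where $\mathfrak m$ is a complement of $\mathfrak g_a$ in $\mathfrak g$, and checks (i) $\omega_x(T_xR,\mathfrak m_{M,x})=0$ and (ii) $\mathfrak m_{M,x}$ is a symplectic subspace of $T_xM$; together these force $\ker\big(\omega|_{T_xR}\big)=\ker\omega_x=T_x\mathcal F$. Step (ii) is where the hypotheses genuinely enter: cleanness gives $T_x(G\cdot x)\cap T_x\mathcal F\cong\mathfrak n$, Proposition~\ref{aspan} gives $\mathfrak n\subset\mathfrak g_a$ and hence $\mathfrak m\cap\mathfrak n=0$, and the identity $\omega_x(\xi_{M,x},\eta_{M,x})=-\langle[\xi,\eta],\phi(x)\rangle$ reduces the claim to the fact that coadjoint orbits are symplectic. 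Your proposal contains no counterpart of this step, so the ``same leaves, same corank'' conclusion remains unproven.

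Two smaller points. Your transversality mechanism for the submanifold claim is misstated: the $\mathfrak q^*$-factor of the normal form lies (after the usual identifications) among the slice directions inside $\mathfrak g_{\phi(x)}^*\subset\mathfrak g_a^*$, not in a complement of $\mathfrak g_a^*$; what provides the complementary directions is the coadjoint sweep, and the paper obtains $R$ as a submanifold directly from the transversality of coadjoint orbits to the natural slice $U$ together with equivariance of $\phi$, with no need of Theorem~\ref{nform}. On the other hand, your verification that $\phi|_R$ is a moment map for the $G_a$-action (tangency of $\xi_M$ to $R$ for $\xi\in\mathfrak g_a$ and restriction of $i(\xi_M)\omega=d\langle\phi,\xi\rangle$) is correct and agrees with the standard argument.
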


\begin{proof}
The proof of the claims on the cross section is similar as in the symplectic case, we refer to \cite[Theorem 3.8]{LMTW} if more detail is necessary. Here we outline the main points of the proof.  
First, since the coadjoint orbits through $U$ intersect $U$ transversely, and $\phi$
is equivariant, $\phi$ is transverse to $U$, so $R=\phi^{-1}(U)$ is a submanifold. Since $U$ is $G_a$-invariant and $\phi$ is equivariant, $R$ is $G_a$-invariant.
Let $\mathfrak f$ be the orthogonal complement of $\mathfrak g_a$ in 
$\mathfrak g$ with respect to some $G$-invariant metric, so
$\mathfrak g = \mathfrak g_a \oplus \mathfrak f $. Let 
$$\mathfrak f_{M, x} = \big\{\xi_{M, x}\,|\, \xi\in\mathfrak f, x\in R\big\}.$$ 
For $x\in R$, we will check the following two things:
\begin{enumerate}
\item $T_x R$  is symplectically perpendicular to 
$\mathfrak f_{M, x}$ in $T_xM$, and 
\item $\mathfrak f_{M, x}$ is a symplectic subspace of $T_x M$. 
\end{enumerate}
For $(1)$,
let $\xi\in\mathfrak f$, and $v\in T_x R$, then
$$\omega(\xi_{M, x}, v) = d\phi (v) (\xi) = 0$$
since $d\phi(v)\in T_{\phi(x)}U \cong \mathfrak g^*_a$ annihilates $\mathfrak f$. 
 Now we look at $(2)$. 
For any $\xi, \eta\in\mathfrak g$, we have
$$\omega_x(\xi_{M, x}, \eta_{M, x})=\langle \xi, d\phi_x(\eta_{M, x})\rangle=\langle\xi, ad^*(\eta)\cdot\phi(x)\rangle = -\langle[\xi, \eta], \phi(x)\rangle.$$
This implies that $\mathfrak f_{M, x}$ is symplectic in $T_x M$
if and only if $ad^*(\mathfrak f)(\phi(x))$ is symplectic in $T_{\phi(x)}(G\cdot \phi(x))$. We have the splitting of the symplectic space $T_{\phi(x)}(G\cdot \phi(x))$:
$$T_{\phi(x)}(G\cdot \phi(x))=T_{\phi(x)}(G_a\cdot \phi(x))\oplus ad^*(\mathfrak f)(\phi(x)),$$ 
and $T_{\phi(x)}(G_a\cdot \phi(x))$ is a symplectic subspace. Moreover, $T_{\phi(x)}(G_a\cdot\phi(x))$ and $ad^*(\mathfrak f)(\phi(x))$ are symplectically perpendicular:  let $\xi\in \mathfrak g_a$, $\eta\in \mathfrak f$, then
$$\langle[\xi, \eta], \phi(x)\rangle = \langle \eta, ad^*(\xi)\phi(x)\rangle =0$$
since $\mathfrak f$ annihilates $T_{\phi(x)}U$.
Hence $ad^*(\mathfrak f)(\phi(x))$ is symplectic in $T_{\phi(x)}(G\cdot \phi(x))$. 
With (1) and (2), and by the fact $T_x M = T_x R\oplus\mathfrak f_{M, x}$, we can see that $\ker (\omega_x|_{T_x R}) = \ker (\omega_x)$, so the claim on the cross section follows. 

If the $G$-action is clean on $M$, then for any $x\in R$, 
$T_x(G\cdot x)\cap T_x\mathcal F \cong T_x(N\cdot x)$, where $N$ is the null group. Since $T_x(N\cdot x)\subset T_x\mathcal F \subset T_x R$ and  
$G_a$ is the maximal subgroup acting on $R$, we have
$T_x(G_a\cdot x)\cap T_x\mathcal F \cong T_x(N\cdot x)$.
\end{proof}

  The highest dimensional face $\tau^P$ of $\mathfrak t^*_+$ which intersects  $\phi(M)$ is called the {\bf principal face}. If $U^P$ is the slice at $\tau^P$, then the cross section
 $R^P=\phi^{-1}(U^P)=\phi^{-1}(\tau^P)$ is called the {\bf principal cross section}, on which only the maximal torus of $G$ acts.

\medskip

In the rest part of this section, we establish a convergence theorem for presymplectic clean Hamiltonian $G$-actions with proper moment maps.   

\begin{theorem}\label{retract}
Let $(M, \omega)$ be a connected presymplectic  Hamiltonian $G$-manifold with a clean $G$-action and proper moment map $\phi$. Assume $0\in \phi (M)$. Then a small open $G$-invariant neighborhood of 
$\phi^{-1}(0)$  equivariantly deformation retracts to $\phi^{-1}(0)$.
\end{theorem}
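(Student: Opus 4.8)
The plan is to reduce to the classical symplectic statement (the Kirwan/Lerman–Tolman–style fact that a neighborhood of a critical level of $\|\phi\|^2$ equivariantly retracts onto it) by a two-step localization: first pass to the principal cross section to kill the nonabelian directions, then quotient by the null foliation to land in an honest symplectic Hamiltonian torus manifold. More precisely, first I would use the cross section theorem (Theorem~\ref{cross}): near $\phi^{-1}(0)$ it suffices to work on the cross section $R = \phi^{-1}(U)$ where $U$ is the natural slice at $0$, since $G\cdot R$ is a $G$-invariant neighborhood of $\phi^{-1}(0)$ and an $N(G_0/?)$-equivariant retraction of $R$ onto $R\cap\phi^{-1}(0)$ induces, by the associated bundle construction $G\times_{G_0}(-)$, a $G$-equivariant retraction of $G\cdot R$ onto $\phi^{-1}(0)$. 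This replaces $G$ by the compact group $G_0$, for which $0$ is interior to the moment polytope; by shrinking further one may assume $G_0 = G$, i.e. $0$ lies in the interior, which is the only case that needs work.

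The second step is to remove the null directions. Assuming (after the cross-section reduction and Theorem~\ref{LS}) that the null subgroup $N$ acts on the relevant neighborhood, I would use the local normal form (Theorem~\ref{nform}): near an orbit $G\cdot x$ with $x\in\phi^{-1}(0)$ the model is $A = G\times_H(\mathfrak q^*\times S\times V)$, and the moment map $\phi_A([g,a,s,v]) = \mathrm{Ad}(g)^*(a+\psi(s))$ \emph{does not involve $V$}. Hence the ``null slice'' directions $V$ are orthogonal to the relevant geometry and can be deformation-retracted away linearly and equivariantly, reducing $\phi^{-1}(0)$-neighborhoods in $M$ to $\phi^{-1}(0)$-neighborhoods in the quotient $M/N$ — or, better, to running the argument $N$-equivariantly on $M$ exactly as in the proofs of Propositions~\ref{mmv} and \ref{q2}, where $C/N$ became an honest fixed-point set downstairs. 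On $M/N$ the residual torus $T' = T/(T\cap N)$ acts in a genuinely symplectic Hamiltonian fashion (on $(T\mathcal F)^\perp$, with $V$ projected out), and $\overline\phi$ is a bona fide symplectic moment map, so $\|\overline\phi\|^2$ is a minimally degenerate / Morse–Bott–Kirwan function and the classical equivariant deformation retraction of a neighborhood of its zero level onto the zero level applies verbatim. Pulling the retraction back through $M\to M/N$ (using that $M\to M/N$ has contractible—indeed orbit—fibers over which one retracts first) and then through $G\times_{G_0}(-)$ gives the asserted $G$-equivariant deformation retraction.

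Concretely the steps in order are: (i) reduce to $0$ interior via Theorem~\ref{cross} and the associated-bundle pushforward; (ii) invoke Theorem~\ref{nform} to see the null slice $V$ is moment-map-irrelevant and retract it away, so the problem lives on $M/N$ with its symplectic $T'$-action and moment map $\overline\phi$; (iii) apply the standard symplectic fact that a neighborhood of $\overline\phi^{-1}(0)$ in $M/N$ equivariantly deformation retracts onto $\overline\phi^{-1}(0)$, using the negative-gradient flow of $-\|\overline\phi\|^2$ (which is tangent to the orbits and well-defined by properness of $\phi$, hence of $\overline\phi$, exactly as in \cite{LS}); (iv) lift the retraction back up the tower $M/N \leftarrow$ (cross section) $\leftarrow M$, using that each quotient map has fibers (leaves, resp.\ $V$-slices) that retract equivariantly onto the base locus.

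I expect the main obstacle to be step (iv), the lifting: the neighborhood of $\phi^{-1}(0)$ in $M$ is \emph{not} literally a product of a neighborhood in $M/N$ with the leaves, because the leaves need not be closed and $N$ need not act properly on $M$ (only the moment map is proper). So some care is needed to patch the local normal form retractions into a global $G$-invariant one — this is the usual partition-of-unity-of-averaged-vector-fields argument, but one must check the interpolating vector field stays tangent to the right distribution and its flow stays inside the chosen neighborhood, exactly the points where properness of $\phi$ is used to get completeness of the flow. A secondary technical point is justifying that $\|\overline\phi\|^2$ has the Morse–Bott–Kirwan structure needed for the symplectic retraction when $M/N$ is noncompact; this again rests on properness of $\overline\phi$ and on Theorem~\ref{conv}, and is handled as in the references \cite{L, LS}.
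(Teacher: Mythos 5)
Your plan diverges from the paper's proof, and the central reduction does not work. The paper proves Theorem~\ref{retract} directly on $M$, with no quotient: cleanness supplies the local normal form (Theorem~\ref{nform}), which gives real-analytic coordinates on a $G$-invariant neighborhood of each orbit in $\phi^{-1}(0)$, so that $f=\|\phi\|^2$ is real analytic there; then, exactly as in Lerman's argument for the symplectic case, the Lojasiewicz gradient inequality yields an integral estimate for the flow of $-\nabla f$, the flow converges, and the limit map $\psi_\infty\colon U\to\phi^{-1}(0)$ is continuous (connectedness of $\phi^{-1}(0)$ comes from Theorem~\ref{conv}, compactness from properness). Your step (iii) implicitly needs this same nontrivial input in any case, since $\|\phi\|^2$ is \emph{not} Morse--Bott even in the symplectic setting (it is only minimally degenerate), so there is no ``verbatim'' Morse--Bott retraction to invoke; the real-analytic/Lojasiewicz mechanism is the whole point of the cited references.

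The genuine gap is your step (ii) together with (iv). First, Theorem~\ref{retract} does not assume the null subgroup $N$ is closed, so $M/N$ need not be Hausdorff, let alone a manifold; and even when $N$ is closed the $N$-action need not be free, so you land at best in a stratified space where ``symplectic Hamiltonian $T'$-manifold'' has no meaning. Second, and more fundamentally, quotienting by $N$ does not kill the null foliation: cleanness only says $T_m(N\cdot m)=T_m(G\cdot m)\cap T_m\mathcal F$, so leaves are generally strictly larger than $N$-orbits --- this is precisely why the normal form has a nonzero null slice $V$ transverse to the $N$-directions --- hence $M/N$ would still be presymplectic, not symplectic, and your reduction to the classical symplectic statement fails. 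Third, even granting a retraction downstairs, lifting it through $M\to M/N$ is not possible in general (the map is not a fibration; leaves may be non-closed), and your proposed fix by patching local retractions of the $V$-factors is exactly the unproved step; the paper avoids all of this by never quotienting. A minor additional point: for $a=0$ one has $G_0=G$, so your cross-section step (i) is vacuous here --- the cross section enters only in Corollary~\ref{corretract}, to reduce a general value $a$ to the case of a central value.
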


\begin{proof}
The case when $\ker (\omega) = 0$, i.e., the symplectic Hamiltonian $G$-action case, is proved  in \cite{Le, W}.   Here for the presymplectic case, we follow the same idea of proof in \cite{Le}. The main point is as follows: the action is clean equips us with the local normal form theorem (Theorem~\ref{nform}), which gives us local real analytic coordinates and allows us to write $\|\phi\|^2$ locally as a real analytic function,
then we can use the properties of local real analytic functions and the same arguments in \cite{Le} to prove the claim. 
 \end{proof}

\begin{corollary}\label{corretract}
Let $(M, \omega)$ be a connected presymplectic  Hamiltonian $G$-manifold with a clean $G$-action and proper moment map $\phi$. Assume $a\in \phi (M)$. Then a small open $G$-invariant neighborhood of $\phi^{-1}(G\cdot a)$  equivariantly deformation retracts to $\phi^{-1}(G\cdot a)$.
\end{corollary}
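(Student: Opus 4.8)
The plan is to deduce Corollary~\ref{corretract} from Theorem~\ref{retract} by an equivariant translation-and-restriction trick, exactly as one passes from the ``zero level'' statement to the ``arbitrary coadjoint orbit'' statement in the symplectic setting. First I would observe that since $a \in \phi(M) \cap \mathfrak{g}^*$ is a point of the image, after replacing $a$ by a point in its coadjoint orbit I may assume $a \in \mathfrak{t}^*_+$; this is harmless because $\phi^{-1}(G\cdot a)$ and the property of equivariantly deformation retracting to it are invariant under the $G$-action. Then I would invoke the cross section theorem (Theorem~\ref{cross}): let $U$ be the natural slice at $a$ and $G_a$ the stabilizer of $a$, so that $R = \phi^{-1}(U)$ is a $G_a$-invariant presymplectic submanifold with the same null foliation as $M$, and $\phi_R := \phi|_R$ is a moment map for the $G_a$-action on $R$. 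Crucially the $G_a$-action on $R$ is still clean (the cleanness condition~\eqref{clean} is a statement about tangent spaces of orbits and the null foliation, both of which are unchanged on $R$), and $\phi_R$ is proper because it is the restriction of the proper map $\phi$ to the closed subset $\phi^{-1}(U)$ mapping into $U$ — more precisely $\phi_R^{-1}(K) = \phi^{-1}(K)$ for $K \subset U$ compact, hence compact.

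Next I would normalize so that the relevant value becomes $0 \in \mathfrak{g}_a^*$. Concretely, consider the shifted moment map $\phi_R - a \colon R \to \mathfrak{g}_a^*$ (using that $U$ is an open subset of $\mathfrak{g}_a^*$ and $a$ is a fixed point of the $G_a$-coadjoint action, so the shift is again a $G_a$-equivariant moment map for the presymplectic form on $R$). Then $(\phi_R - a)^{-1}(0) = \phi_R^{-1}(a) = \phi^{-1}(a)$, and this is still a proper moment map. Applying Theorem~\ref{retract} to the connected presymplectic Hamiltonian $G_a$-manifold $(R, \omega|_R)$ with clean action and proper moment map $\phi_R - a$ — after passing to the connected component of $R$ containing $\phi^{-1}(a)$, which carries all of $\phi^{-1}(a)$ since the latter is connected by Theorem~\ref{conv} — yields a small open $G_a$-invariant neighborhood $W$ of $\phi^{-1}(a)$ in $R$ that $G_a$-equivariantly deformation retracts to $\phi^{-1}(a)$.

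Finally I would induce the retraction up to $M$. The set $G \cdot W$ is an open $G$-invariant neighborhood of $G \cdot \phi^{-1}(a) = \phi^{-1}(G \cdot a)$ in $M$ (open because $R = \phi^{-1}(U)$ is open and the $G$-action is by diffeomorphisms; one can also identify a neighborhood of $\phi^{-1}(G\cdot a)$ with $G \times_{G_a} W$). Since $\phi^{-1}(G\cdot a) = G \times_{G_a} \phi^{-1}(a)$ and the retraction of $W$ onto $\phi^{-1}(a)$ is $G_a$-equivariant, it extends by the $G$-action to a $G$-equivariant deformation retraction of $G \times_{G_a} W$ onto $G \times_{G_a} \phi^{-1}(a) = \phi^{-1}(G\cdot a)$: explicitly, if $\rho_t \colon W \to W$ is the $G_a$-equivariant retraction, set $\tilde\rho_t([g,w]) = [g, \rho_t(w)]$, which is well defined by $G_a$-equivariance and $G$-equivariant by construction. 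Intersecting $G \cdot W$ with any smaller $G$-invariant neighborhood if one wants it ``small'' completes the argument.

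The main obstacle I anticipate is bookkeeping rather than a deep point: one must check carefully that the cross section $R$ inherits \emph{all} the hypotheses of Theorem~\ref{retract} — connectedness (handled by restricting to the component meeting $\phi^{-1}(a)$, using connectedness of the fiber from Theorem~\ref{conv}), cleanness of the $G_a$-action (inherited because $R$ has the same leaves as $M$ and the orbit directions relevant to~\eqref{clean} sit inside $R$), and properness of $\phi_R - a$ (inherited since preimages of compacta in $U$ agree with those under $\phi$) — and that the equivariant retraction on $R$ genuinely glues to a $G$-equivariant retraction on the induced bundle $G \times_{G_a} W$ without running into smoothness or well-definedness issues at the boundary of $W$.
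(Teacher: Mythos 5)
Your proposal is correct and follows essentially the same route as the paper's proof: pass to the cross section $R=\phi^{-1}(U)$ with its clean $G_a$-action (Theorem~\ref{cross}), shift $\phi|_R$ so that $a$ becomes the value $0$, apply Theorem~\ref{retract} to get a $G_a$-equivariant retraction onto $\phi^{-1}(a)$, and conclude by equivariance of $\phi$. You merely spell out details the paper leaves implicit (properness and connectedness of the restricted data, and the saturation via $G\times_{G_a}W$), which is fine.
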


\begin{proof}
Let $U$ be the slice at $a$, and let $R=\phi^{-1}(U)$ be the $G_a$-invariant cross section. Since $a$ is a fixed point
of $G_a$, by  a shift of $\phi|_R$, we may think of $a$ as the value $0$ of $\phi|_R$ on $R$. By Theorem~\ref{cross}, the $G$-action on $M$ is clean implies
that the $G_a$-action on $R$ is clean, i.e., $T_x(G_a\cdot x)\cap T_x\mathcal F=T_x(N\cdot x)$ for all $x\in R$. By Theorem~\ref{retract}, a small open $G_a$-invariant neighborhood of $\phi^{-1}(a)$ in $R$ $G_a$-equivariantly deformation retracts to  $\phi^{-1}(a)$. By the equivariance of $\phi$, the claim follows.
\end{proof}

\section{The fundamental groups of the quotients and the proof of Theorem~\ref{C}}

In this section, we prove Theorem~\ref{C} on the fundamental groups of the quotients $M/G$ and the presymplectic quotients $M_a$s'. The method is similar to that of the proof of the theorem when $\ker(\omega) = 0$, i.e., the symplectic case (\cite{L}). For the current presymplectic case, we need to take a deeper look at the structure of connected compact Lie groups and their Lie algebras.
We use mainly two operations: removing strata from stratified spaces, and doing local deformation retractions using 
Theorem~\ref{retract} and Corollary~\ref{corretract}. For the removing process to work, we need to prove that the links of the removed strata are connected and simply connected.  For the local deformation retractions, we need the local properness of the moment map.

\medskip

A  {\bf stratified space} $X$ is a Hausdorff  and paracompact topological space defined recursively as follows: $X$ can be decomposed into a disjoint union of (locally finite) connected pieces, called strata, which are
 manifolds, such that given any point $x$ in a (connected) stratum $S$,  there exist an open neighborhood $U$ of $x$, an open ball $B$ around $x$ in $S$, a compact stratified space $L$, called the {\bf link of x}, and a homeomorphism 
$$B\times \overset{\circ}{C}L\longrightarrow U$$
 that preserves the decompositions. Here, $\overset{\circ}{C}L$ is 
a cone over the link $L$, i.e., 
$\big(L\times [0, \infty)\big)/L\times \{0\}$. We also call the  link of $x$ the {\bf link of S}.

We will use the following two useful results. Lemma~\ref{remove} can be checked using the Van-Kampen Theorem.
\begin{lemma}\label{remove}\cite{L0}
Let $X$ be a connected stratified space. If $X_0$ is a closed stratum in $X$ with connected and simply connected link,
then $\pi_1(X)\cong\pi_1(X-X_0)$.
\end{lemma}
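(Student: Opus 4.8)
The plan is to prove the lemma by a van Kampen argument applied to the decomposition of $X$ into a tubular neighborhood of $X_0$ and its complement. First I would invoke the local structure of the stratified space along the closed stratum $X_0$: since $X_0$ is closed, it has an open tubular neighborhood $U\subset X$ which is the total space of a cone bundle $\pi\colon U\to X_0$ with fiber the open cone $\overset{\circ}{C}L$ over the link $L$, the stratum $X_0\subset U$ being realized as the section of cone points. Collapsing the cone fibers shows that $U$ deformation retracts onto $X_0$, so $\pi_1(U)\cong\pi_1(X_0)$; and deleting $X_0$ fiberwise, $U-X_0$ deformation retracts onto the ``link bundle'' over $X_0$ whose fiber is $\overset{\circ}{C}L$ minus its cone point, which equals $L\times(0,\infty)$ and hence is homotopy equivalent to $L$.

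Next I would feed the resulting fibration $L\to U-X_0\to X_0$ into the homotopy long exact sequence. Because $L$ is connected and simply connected, $\pi_1(L)=1$ and $\pi_0(L)=\ast$, so the map $\pi_1(U-X_0)\to\pi_1(X_0)$ is an isomorphism; as this map is, up to homotopy, the composite of the inclusion $U-X_0\hookrightarrow U$ with the homotopy equivalence $U\to X_0$, the inclusion $U-X_0\hookrightarrow U$ itself induces an isomorphism on fundamental groups. In particular $U$, $U-X_0$ and $X_0$ are all connected. I would then check that $X-X_0$ is connected as well: any two of its points are joined by a path in the connected space $X$, and near each point of $X_0$ lying on that path, the local product structure $B\times\overset{\circ}{C}L$ together with path-connectedness of $\overset{\circ}{C}L$ minus its cone point (valid since $L$ is connected) lets one reroute the path off $X_0$.

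With these connectivity facts in hand, van Kampen's theorem applied to the open cover $X=(X-X_0)\cup U$ with $(X-X_0)\cap U=U-X_0$ yields
$$\pi_1(X)\;\cong\;\pi_1(X-X_0)\ast_{\pi_1(U-X_0)}\pi_1(U).$$
Since $\pi_1(U-X_0)\to\pi_1(U)$ is an isomorphism, amalgamating over it collapses the second factor, so the inclusion-induced map $\pi_1(X-X_0)\to\pi_1(X)$ is an isomorphism; this is exactly the assertion.

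The step I expect to be the main obstacle is the first one: extracting from the bare definition of a stratified space an honest tubular neighborhood of $X_0$ carrying a cone-bundle structure over $X_0$, so that, after deleting the cone points and retracting, one has a genuine fibration with fiber $L$ over $X_0$ to which the homotopy exact sequence applies. This is where one must use the recursive cone structure in the definition of the link together with Thom--Mather-type control data to patch the local homeomorphisms $B\times\overset{\circ}{C}L\to U$ into a global model; the point-set fact that $X-X_0$ is connected relies on the same local picture. Everything after that is a formal application of van Kampen plus the elementary observation that $G\ast_H K\cong G$ whenever $H\to K$ is an isomorphism.
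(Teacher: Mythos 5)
Your van Kampen skeleton is fine, and the paper itself does not spell out a proof (it simply cites \cite{L0}), so the real question is whether your argument stands on its own. It does not quite: the load-bearing step is precisely the one you flag and then leave open, namely the existence of an open neighborhood $U$ of $X_0$ that is a genuine cone bundle over $X_0$, so that $U$ deformation retracts to $X_0$ and $U-X_0$ fibers over $X_0$ with fiber $L$. The definition of stratified space used here provides only \emph{local} charts $B\times\overset{\circ}{C}L\to U_x$ preserving the decomposition; it carries no smooth structure, no Whitney conditions, and no control data, so ``Thom--Mather-type control data'' cannot simply be invoked to patch the charts into a global mapping-cylinder neighborhood with a fibration $L\to U-X_0\to X_0$. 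Without that, the homotopy exact sequence you apply has no fibration to apply to, and the statement $\pi_1(U-X_0)\cong\pi_1(U)$ --- the only place where simple connectivity of $L$ enters --- is unproved. Everything downstream (the amalgamation $\pi_1(X)\cong\pi_1(X-X_0)\ast_{\pi_1(U-X_0)}\pi_1(U)$ collapsing when one leg is an isomorphism, and the connectivity checks) is correct but formal.

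The gap can be closed without any global bundle structure by arguing chart by chart: each distinguished neighborhood $B\times\overset{\circ}{C}L$ is contractible, and its intersection with the complement of the stratum is $B\times L\times(0,\infty)$, which is path connected because $L$ is, and simply connected because $L$ is. Given a loop in $X$, cover its compact image by finitely many such charts, subdivide by a Lebesgue number, and replace each subarc meeting $X_0$ by an arc in the chart avoiding $X_0$ with the same endpoints (contractibility of the chart makes the replacement a homotopy rel endpoints); this gives surjectivity of $\pi_1(X-X_0)\to\pi_1(X)$. For injectivity, run the same grid argument on a null-homotopy disk, now using simple connectivity of $B\times L\times(0,\infty)$ to push the disk off $X_0$ cell by cell. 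This purely local push-off argument uses exactly the hypotheses ``link connected and simply connected'' where they are needed, and is the natural way to prove the lemma at the level of generality in which it is stated; alternatively, you could restrict the lemma to spaces equipped with control data (as the orbit spaces in this paper in fact are) and then your fibration argument becomes legitimate, but that restriction must be stated, not assumed.
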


 \begin{theorem}\label{arm}\cite{Ar}
   Let $K$ be a compact Lie group acting on a compact path connected and simply connected
   metric space $X$. Let $H$ be the smallest normal subgroup of $K$ which contains the identity component
   of $K$ and all those elements of $K$ which have fixed points.
   Then $\pi_1(X/K)\cong K/H$.
   \end{theorem}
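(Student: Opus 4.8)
The plan is to reduce the statement to the classical case of a \emph{finite} group acting on a simply connected space by first quotienting out the identity component $K_0$ of $K$. Write $\Gamma = K/K_0$, a finite group (the group of connected components), and set $Y = X/K_0$. Since iterated quotients compose, $X/K = (X/K_0)/(K/K_0) = Y/\Gamma$, so it suffices to compute $\pi_1(Y/\Gamma)$. The point that makes this reduction useful is that $Y$ is again compact and path connected and, crucially, simply connected.

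To see that $Y = X/K_0$ is simply connected, I would first note it is path connected as the continuous image of the path connected space $X$. For the fundamental group I would use that a \emph{connected} compact group $K_0$ acting on $X$ gives an orbit map $q_0\colon X\to X/K_0$ inducing a surjection $q_{0*}\colon\pi_1(X)\twoheadrightarrow\pi_1(X/K_0)$: given a loop in $X/K_0$, lift it to a path in $X$ (the compact group action admits slices, so the orbit map carries enough local structure to lift paths), then close it up along the orbit of its endpoints using a path in $K_0$ from $e$ to the relevant group element; the closing path projects to a constant loop, so the resulting loop in $X$ projects to the given one. As $\pi_1(X)=1$, this forces $\pi_1(Y)=1$. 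This is exactly the surjectivity phenomenon already used in the paper for connected groups acting on manifolds, here applied to a metric space via the slice theorem.

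With $Y$ simply connected I would invoke the discrete version of the theorem (a finite, indeed any properly discontinuous, group $\Gamma$ acting on a simply connected, locally nice space $Y$ satisfies $\pi_1(Y/\Gamma)\cong\Gamma/\Gamma_s$, where $\Gamma_s$ is the normal subgroup generated by those elements having a fixed point on $Y$). The cleanest way to organize this is through the \textbf{developing homomorphism} $\Phi\colon\Gamma\to\pi_1(Y/\Gamma)$: fix a basepoint $\tilde y_0$ and send $\gamma$ to the projection of any path in $Y$ from $\tilde y_0$ to $\gamma\cdot\tilde y_0$, which is a well-defined loop downstairs precisely because $Y$ is simply connected. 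One checks directly that $\Phi$ is a surjective homomorphism and that every $\gamma$ with a fixed point lies in $\ker\Phi$ (run the connecting path through the fixed point, so its projection cancels), whence $\Gamma_s\subseteq\ker\Phi$ and $\Phi$ descends to a surjection $\Gamma/\Gamma_s\to\pi_1(Y/\Gamma)$. \textbf{The main obstacle is the reverse inclusion} $\ker\Phi\subseteq\Gamma_s$: this is the substantive content of Armstrong's theorem, where simple connectivity and the local cone/link structure of the orbit space are genuinely used, since one must show that any loop in $Y/\Gamma$ lifting to a \emph{loop} in $Y$ is already trivial once the isotropy relations are imposed.

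Finally I would match the subgroups to recover the stated form of the answer. An element $\bar k=kK_0\in\Gamma$ fixes a $K_0$-orbit $K_0\cdot x$ in $Y$ exactly when $k\cdot x=g\cdot x$ for some $g\in K_0$, that is, when the coset $kK_0$ contains an element of $K$ with a fixed point on $X$. Since $K_0\subseteq H$ by definition of $H$, this shows that $\Gamma_s$ coincides with the image of $H$ in $\Gamma=K/K_0$, so $\Gamma/\Gamma_s\cong K/H$. Combining with $\pi_1(X/K)=\pi_1(Y/\Gamma)\cong\Gamma/\Gamma_s$ then yields $\pi_1(X/K)\cong K/H$, as claimed. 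I expect the only delicate points to be the path-lifting property of the orbit maps (handled by the slice theorem for compact group actions) and the kernel computation $\ker\Phi\subseteq\Gamma_s$ flagged above.
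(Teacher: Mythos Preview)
The paper does not prove this theorem; it is quoted from Armstrong \cite{Ar} and used as a black box in the arguments of Sections~5 and~6. There is therefore no in-paper proof to compare your proposal against.

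For what it is worth, your outline is essentially Armstrong's own strategy in \cite{Ar}: pass to the finite group $\Gamma=K/K_0$ acting on $Y=X/K_0$, show $Y$ is simply connected because $K_0$ is connected, and then invoke the earlier Armstrong result for discontinuous group actions on simply connected spaces. The two steps you flag as delicate---path and homotopy lifting for the orbit map $X\to X/K_0$, and the inclusion $\ker\Phi\subseteq\Gamma_s$---are exactly where the work lies, and both are carried out in Armstrong's papers rather than here. One caution: your appeal to ``the slice theorem'' for a compact Lie group acting on a general compact metric space is not automatic; Armstrong does not rely on slices but on weaker properties of the orbit map (openness, path-connected fibers, and a homotopy-lifting argument specific to this setting). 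So your proposal is a correct high-level roadmap to Armstrong's proof, but it is not an independent argument, and since you explicitly import the discrete case as a black box, the substantive content is still being cited rather than proved.
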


Let $(M, \omega)$ be a connected presymplectic Hamiltonian $G$-manifold with a clean $G$-action and proper moment map $\phi$. 
Recall that by Theorem~\ref{conv}, $\phi(M)\cap \mathfrak t^*_+$ is a closed convex polyhedral set. We call a value $a$ of $\phi$ {\bf generic} if $\phi^{-1}(G\cdot a)$ consists of points with the smallest dimensional stabilizer groups on $M$. A connected set of generic values on the principal face of $\mathfrak t^*_+$ is called a {\bf chamber} of 
$\phi(M)\cap \mathfrak t^*_+$. 

\subsection{When $G=T$ is a torus}

\

\medskip

In this part, we prove Theorem~\ref{C} for $G=T$, a torus. We state the theorem in the abelian case as follows.

\begin{theorem 4(a)}
Let $(M, \omega)$ be a connected presymplectic Hamiltonian $T$-manifold with a clean $T$-action and proper moment map $\phi$. Then
we have natural isomorphisms $\pi_1(M/T)\cong\pi_1(M_a)$ for all $a\in \phi(M)$.
\end{theorem 4(a)}

In this case, let us keep in mind that $\phi(M)\cap \mathfrak t^*_+ = \phi(M)$, and it  is a locally finite closed convex polyhedral set.
Let $\mathcal H$ be the set of closed half spaces involved in $\phi(M)$.
 Each closed half space in $\mathcal H$ has an interior and a boundary,
we call them {\bf faces} of the closed half space. We call 
the intersections of the faces of the half spaces in $\mathcal H$ {\bf faces} of the polyhedral set $\phi(M)$. A face can consist of certain interior points of
the polyhedral set $\phi(M)$, or it can be lower dimensional internal or boundary set of intersection points of $\phi(M)$. The different dimensional faces of 
$\phi(M)$ are caused by different dimensional stabilizer groups of the action.  Lower dimensional faces other than the chambers defined above are called {\bf non-generic faces}.

\begin{lemma}\label{a=b}
Let $(M, \omega)$ be a connected presymplectic  Hamiltonian $T$-manifold with a clean $T$-action and proper moment map $\phi$. 
Then for any values $a$ and $b$ in the same chamber of $\phi(M)$,
we have $\pi_1(M_a)=\pi_1(M_b)$.
\end{lemma}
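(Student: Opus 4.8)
The plan is to show that the presymplectic quotients $M_a$ and $M_b$ are homeomorphic, which of course implies $\pi_1(M_a) = \pi_1(M_b)$. Since $a$ and $b$ lie in the same chamber of $\phi(M)$, they are both generic values with the same (smallest-dimensional) stabilizer type, and the chamber is a connected open set of generic values. First I would connect $a$ to $b$ by a path $\gamma$ lying entirely inside the chamber; by compactness it suffices to prove the claim when $a$ and $b$ are close, i.e., to show the quotients are locally constant in $a$ as $a$ ranges over the chamber. So fix $a$ in the chamber and let $b$ be nearby.

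The key tool is the local normal form, Theorem~\ref{nform}, combined with the gradient-flow techniques behind Theorem~\ref{retract} and Corollary~\ref{corretract}. Near a fiber $\phi^{-1}(a)$ with $a$ generic, all points have the same stabilizer, so after passing to the cross section $R = \phi^{-1}(U)$ at $a$ (Theorem~\ref{cross}), on which only the torus $T$ acts, the normal form says a neighborhood of $\phi^{-1}(a)$ looks like a product in which the moment map is essentially the projection to the $\mathfrak q^*$-factor (the symplectic slice $S$ is trivial since the stabilizer is already minimal, and the null slice $V$ does not affect the moment map). Thus over the chamber, $\phi$ restricted to a neighborhood of $\phi^{-1}(a)$ is a submersion onto an open set, and the fibers $\phi^{-1}(a)$ and $\phi^{-1}(b)$ are equivariantly diffeomorphic for $b$ close to $a$ (one can transport along a lift of the straight segment from $a$ to $b$ using a $T$-invariant connection, or directly using the gradient flow of a component of $\phi$ as in~\eqref{eqgrad} together with the Lojasiewicz-type estimates from the proof of Theorem~\ref{retract}). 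Quotienting by $T$ (more precisely by the common stabilizer, which acts trivially, and the complementary torus, which acts with the same orbit structure) gives a homeomorphism $M_a \cong M_b$.

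Concretely, the steps in order are: (1) reduce to the case $b$ near $a$ by the connectedness of the chamber and a compactness argument along a path; (2) pass to the cross section $R$ at $a$ so that $G$ is replaced by the torus $T$ and $a$ becomes the value $0$ after a shift; (3) apply the local normal form (Theorem~\ref{nform}) at points of $\phi^{-1}(a)$ to see that $\phi$ is a submersion onto a neighborhood of $a$ with all fibers over the chamber equivariantly diffeomorphic; (4) construct the equivariant diffeomorphism $\phi^{-1}(a) \to \phi^{-1}(b)$ explicitly, either by integrating a horizontal lift of the segment $\overline{ab}$ or by the normalized gradient flow of $\phi^{\xi}$ for a suitable $\xi$, using the deformation-retraction machinery of Theorem~\ref{retract}; (5) descend to the quotients to conclude $M_a \cong M_b$, hence $\pi_1(M_a) \cong \pi_1(M_b)$.

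The main obstacle will be step (3)–(4): making sure the local normal form genuinely patches into a statement that $\phi$ is a \emph{proper} submersion over the whole chamber, so that Ehresmann-type triviality applies and the fibers over $a$ and $b$ really are globally (not just locally) diffeomorphic in a $T$-equivariant way. One must check that over a generic chamber there are no stabilizer jumps and that the null directions $V$, which carry no moment-map information, are handled consistently across charts; the properness of $\phi$ is what rescues this, guaranteeing the fibers are compact and the flow used to identify them is complete. Once the equivariant triviality over the chamber is in hand, taking quotients is routine since the $T$-action has the same orbit-type structure over all of the chamber.
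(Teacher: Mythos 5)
Your proposal is correct and follows essentially the same route as the paper: the paper simply observes that $\phi\colon\phi^{-1}(U)\to U$ is a proper equivariant submersion over the chamber $U$, applies Ehresmann's lemma to get an equivariant diffeomorphism $\phi^{-1}(a)\cong\phi^{-1}(b)$, and passes to quotients. The extra machinery you invoke (cross sections --- vacuous here since the group is already the torus $T$ --- gradient flows, Lojasiewicz estimates) is not needed, but it does no harm.
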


\begin{proof}
Assume $a$ and $b$ are in the same chamber $U$. Then $\phi\colon \phi^{-1}(U)\to U$ is a proper (equivariant) submersion. By Ehresmann's lemma, $\phi^{-1}(a)$ and $\phi^{-1}(b)$ are (equivariantly) diffeomorphic. So the claim follows.
\end{proof}

\begin{proposition}\label{Tlink}
Let $(M, \omega)$ be a connected presymplectic  Hamiltonian $T$-manifold with  a clean $T$-action and proper moment map $\phi$. 
Let  {\bf F} be a non-generic face of $\phi(M)$. 
Suppose $M_H$ is an orbit type such that $M_H\cap\phi^{-1}(\bf F)\neq \emptyset$. Let $\overline U$ be the closure of
one chamber $U$ such that ${\bf F}\subset \overline U$. Then the link 
$L_H$ of $\big(M_H\cap\phi^{-1}({\bf F})\big)/T$ in 
$\phi^{-1}(\overline U)/T$ is always connected and simply connected.
\end{proposition}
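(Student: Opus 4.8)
The plan is to reduce the statement to a local computation via the local normal form, to identify $L_H$ as the quotient by the compact stabilizer $H$ of a subset of a sphere cut out by moment--cone inequalities, and then to conclude simple connectivity by Armstrong's theorem (Theorem~\ref{arm}).

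Since the link of a stratum is a local invariant, it suffices to compute $L_H$ in a distinguished neighborhood of an arbitrary point $x\in M_H\cap\phi^{-1}(\mathbf F)$ (with $\phi(x)$ in the relative interior of $\mathbf F$). The $T$-action is clean and $T$ is abelian, so the local normal form (Theorem~\ref{nform}) presents a $T$-invariant neighborhood of $T\cdot x$ as $A=T\times_H(\mathfrak q^*\times S\times V)$, with moment map $\phi_A([g,a,s,v])=\phi(x)+a+\psi(s)$, where $H$ acts trivially on $\mathfrak q^*$, symplectically on the symplectic slice $S$ with $S^H=0$, and linearly on the null slice $V$. Passing to quotients, $A/T\cong\mathfrak q^*\times(S\times V)/H$, and one reads off that near $\bar x$ the stratum $(M_H\cap\phi^{-1}(\mathbf F))/T$ consists of the classes $[a,0,v]$ with $v\in V^H$ and $\phi(x)+a\in\mathbf F$. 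Splitting off these tangential factors (the subspace $V^H$ and the $\mathfrak q^*$-directions lying along $\mathbf F$) into the flat factor $B$ of a distinguished neighborhood $B\times\overset{\circ}{C}L_H$, what remains is an identification $L_H\cong\Sigma/H$, where $\Sigma$ is the intersection of a unit sphere in the transverse directions (a complement to the $\mathbf F$-directions in $\mathfrak q^*$, together with $S$ and the fixed-point-free summand $V'$ of $V$, so that $(S\oplus V')^H=0$) with the preimage of the transverse tangent cone $\mathcal C'$ of $\overline U$ at $\phi(x)$ under $(a',s)\mapsto a'+\psi(s)$. The point here is that $\mathcal C'$ is a \emph{nonzero, full-dimensional} convex cone, precisely because $U$ is a chamber, i.e. a top-dimensional face of $\phi(M)$ (Theorem~\ref{conv}) and $\mathbf F$ a proper, non-generic face of it.

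One then has to show that $\Sigma$ is connected and simply connected. Using that $\mathcal C'$ is full-dimensional, $\Sigma$ is either a whole sphere or a sphere with an open region deleted whose complement is connected, and the non-genericity of $\mathbf F$ together with the chamber hypothesis forces the relevant sphere to have dimension at least two, ruling out the low-dimensional exceptional configurations (which are in any case vacuous or checked by hand). \textbf{The main obstacle} I anticipate lies exactly in this step: pinning down the region $\Sigma$ when the weights of $\psi$ lie in special position relative to $\mathcal C'$, and verifying connectivity and simple connectivity of $\Sigma$ uniformly in all the combinatorial types that can occur; here the quadratic nature of $\psi$ and the interaction between the $\mathfrak q^*$-directions and the weight cone of $S$ make a careful case analysis unavoidable. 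When there is no null slice this $\Sigma$ is exactly the one already analyzed in the symplectic case \cite{L0}, and the addition of the null slice contributes only the harmless factors $V^H$ (tangential) and $V'$ (a fixed-point-free linear $H$-summand).

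Finally, to pass from $\Sigma$ to $L_H=\Sigma/H$: connectedness is immediate from that of $\Sigma$, and for simple connectivity one observes that \emph{every} element of $H$ fixes a point of $\Sigma$. Indeed, choosing any unit vector $a'$ in the nonzero cone $\mathcal C'$, the point $(a',0,0)\in\Sigma$ lies in $\phi^{-1}(\overline U)$ but not in $\phi^{-1}(\mathbf F)$, and is fixed by all of $H$ because $H$ acts trivially on $\mathfrak q^*$; here again the chamber hypothesis, guaranteeing $\mathcal C'\neq\{0\}$, is essential. Hence, in the notation of Theorem~\ref{arm}, the subgroup $H'\subseteq H$ generated by $H^0$ and the elements of $H$ with fixed points is all of $H$, so $\pi_1(L_H)=\pi_1(\Sigma/H)\cong H/H=1$. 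Thus $L_H$ is connected and simply connected, as required for the removal of strata via Lemma~\ref{remove} in the proof of Theorem~\ref{C}.
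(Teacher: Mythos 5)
Your reduction via the local normal form and the identification $L_H\cong\Sigma/H$ is the same as the paper's starting point, and your Armstrong argument does settle the case in which the tangent cone of $\overline U$ at $\phi(x)$ transverse to $\mathbf F$ contains a nonzero direction coming from $\mathfrak q^*$ (the paper's factor $(\R^+)^l\neq 0$, with $S'\cap\psi^{-1}(\overline U)=S'$). But the two claims your argument rests on fail exactly in the remaining cases, which are the hard ones. First, $\Sigma$ need not be simply connected: take $H=S^1$ acting on the symplectic slice $S'=\C^2$ with weights $+1,-1$ (this occurs already in the symplectic setting, e.g.\ at the fixed point $(N,S)$ of the diagonal circle action on $S^2\times S^2$, with $\mathbf F=\{0\}$ an interior non-generic face and $\overline U=[0,2]$). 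There $\mathfrak q^*=0$, so the transverse directions to $\mathbf F$ come entirely from $\psi(S')$, and $\Sigma=S^3\cap\{|z_1|^2\geq|z_2|^2\}$ is a closed solid torus with $\pi_1=\Z$; it is not a sphere with a deleted region in any sense that yields simple connectivity. Second, in this same example $H$ acts freely on $\Sigma$, so your fixed point $(a',0,0)$ does not exist: its existence requires the transverse cone $\mathcal C'$ to meet the $\mathfrak q^*$-directions, which the chamber hypothesis does not guarantee. Consequently Armstrong's theorem, which requires the space being quotiented to be simply connected, cannot be applied to $\Sigma$ at all in these cases; note that $L_H=\Sigma/H\cong D^2$ is indeed simply connected here, but for reasons your argument does not capture.

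Moreover, the step you flag as ``the main obstacle'' is precisely where the paper's proof does its real work, and it is not optional: when $S'\cap\psi^{-1}(\overline U)\subsetneq S'$ (and also when $(\R^+)^l=0$), the paper writes $\psi=\sum_{i\in J}f_i\alpha_i$ with the $f_i\geq 0$ quadratic, covers the punctured set $\big((\R^+)^l\times S'\times V'-\{0\}\big)\cap\phi_A^{-1}(\overline U)$ modulo $H$ by the quotients of the pieces $A_i$ (and $B_i$ when $(\R^+)^l\neq 0$) defined by $f_i>0$ (resp.\ $r_i>0$), shows each piece and each pairwise intersection is connected and simply connected (Armstrong is applied to these pieces, passing through $H^0$ and $\Gamma=H/H^0$, not to $\Sigma$), and assembles the conclusion by Van Kampen; the degenerate case $S'\cong\C$, $V'=0$ is disposed of separately because there the quotient is a point. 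Without carrying out this analysis, or an equivalent one performed at the level of the quotient rather than of $\Sigma$, your proposal does not yet prove the proposition.
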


\begin{proof}
By Theorem~\ref{nform}, a neighborhood of an orbit with stabilizer group $H$ is isomorphic as a presymplectic Hamiltonian $T$-manifold to 
$$A=T\times_H(\mathfrak q^*\times S\times V),$$
where $\mathfrak q^*$, $S$ and $V$ are as stated in the theorem. 
Split $\mathfrak q^* \cong \R^l\times \R^m$, where $\R^m$ is the subspace which is mapped to $\bf F$, split $S=S^H\times S'$ and $V=V^H\times V'$, where $S^H$ and $V^H$ are the subspaces fixed by $H$. Either of these subspaces can be $0$ or the whole space. Then 
$$A_H\cap \phi^{-1}({\bf F}) = T\times_H(\R^m\times S^H\times V^H).$$ So
$$\big(A_H\cap \phi^{-1}({\bf F})\big)/T =\big(\R^m\times S^H\times V^H\big)/H  =\R^m\times S^H\times V^H.$$ 
We have
$$A\cap\phi^{-1}(\overline U) = T\times_H\Big(\R^m\times S^H\times V^H\times (\R^+)^l\times (S'\cap\psi^{-1}(\overline U))\times V'\Big),$$
where $\R^+$ denotes the nonnegative half space of $\R$.
Then 
$$\big(A\cap\phi^{-1}(\overline U)\big)/T =\big(\R^m\times S^H\times V^H\big)\times\big((\R^+)^l\times (S'\cap\psi^{-1}(\overline U))\times V'                                  \big)/H.$$ 
So the link of $\big(A_H\cap \phi^{-1}({\bf F})\big)/T$ in $\big(A\cap\phi^{-1}(\overline U)\big)/T$ is 
$$L_H=\mathcal S\Big((\R^+)^l\times (S'\cap\psi^{-1}(\overline U))\times V'\Big)/H,$$
where $\mathcal S(\cdot)$ denotes the sphere of the corresponding space.
This is the same as the link of $\big(M_H\cap \phi^{-1}({\bf F})\big)/T$ in  
$\phi^{-1}(\overline U)/T$. Since by Theorem~\ref{nform}, $V'$ has no contribution to $\im (\phi)$, and $\bf F$ is not a generic face, we have 
$$(\R^+)^l\times (S'\cap\psi^{-1}(\overline U))\neq 0.$$ 
 
 {\bf (1)} First assume $S'\cap\psi^{-1}(\overline U)=S'$. (This happens when a neighborhood of $\bf F$ meets only one chamber or when $\psi$ is trivial on $S'$.) Then
$$L_H=\mathcal S\big((\R^+)^l\times S'\times V'\big)/H.$$
If $(\R^+)^l\neq 0$, then $\mathcal S\big((\R^+)^l\times S'\times V'\big)$ is always connected and simply connected no matter what the vector spaces $S'$ and $V'$ are, and $H$ fixes $(\R^+)^l$, by Theorem~\ref{arm}, $L_H$ is connected and simply connected.
Next assume $(\R^+)^l = 0$, then $S'$ needs to be a nontrivial symplectic $H$-representation with a nontrivial moment map (in order to have the chamber).  We must have $\dim H > 0$. If $S'\cong\C$, and $V'=0$, then $L_H$ must be a point, hence is connected and simply connected. If $\dim (S'\times V') \geq 3$, then $\mathcal S(S'\times V')$ is connected and simply connected. 
By Theorem~\ref{arm}, $\mathcal S(S'\times V')/H^0$ is connected and simply connected, where $H^0$ is the identity component of $H$. 
Since each element of $\Gamma = H/H^0$ acts on $S'$ as an element of a circle, it must have a nonzero fixed point  in $\mathcal S(S')/H^0$, by Theorem~\ref{arm} again, $\mathcal S(S'\times V')/H^0/\Gamma =L_H$ is connected and simply connected.

 {\bf (2)}  Next assume that $S'\cap\psi^{-1}(\overline U)\subsetneq S'$. 

 {\bf (2a)}  Suppose first that $(\R^+)^l = 0$. Then $S'\cong \C^k$, with $k\geq 1$, is a nontrivial symplectic $H$-representation with nontrivial moment map $\psi$. With no loss of generality, we assume the $H$-action on $S'$ is effective, so $\dim (H)\leq k$. Since the $H$ action is linear and the moment map $\psi$ is homogeneous, to prove $\mathcal S\Big(\big(S'\cap\psi^{-1}(\overline U)\big)\times V'\Big)/H$ is connected and simply connected, we only need to prove 
$\Big(\big(\C^k\times V' - \{0\}\big)\cap\psi^{-1}(\overline U)\Big)/H$ is connected and simply connected. The moment map
for the closed subtorus $H$ action on the symplectic vector space $S'$    
$$\psi\colon S'\cong \C^k\longrightarrow \mathfrak h^*$$
is of the form $\psi (z_1, \cdots, z_k) = \sum_{i=1}^k |z_i|^2\alpha_i$, 
where the $\alpha_is$' are weight vectors in $\mathfrak h^*$. 
With no loss of generality, we may assume that the cone $\im (\psi)\cap \overline U$ is spanned by the first certain number of $\alpha_is$', and denote the index set of these $is$' by $J$, where $|J|\geq \dim (H)$. By writing the rest of the $\alpha_is$' as linear combinitions of the first linearly independant $\dim (H)$ number of
$\alpha_is$', we have that the map
$$\psi\colon S'\cong \C^k\longrightarrow \overline U$$
is given by $\psi (z_1, \cdots, z_k) = \sum_{i\in J} f_i\alpha_i$, where 
$f_i$ is obtained in the way above, and is of the form $$f_i (z) = \sum_{j=1}^k a_{ij} |z_j|^2 \geq 0.$$
Let  $A_i = \big\{(z, x)\in \C^k\times V'\,|\, f_i(z) > 0, f_j (z)\geq 0\,\, \mbox{for $j\neq i$}\big\}$. Then $$\Big(\big(\C^k\times V' - \{0\}\big)\cap\psi^{-1}(\overline U)\Big)/H = \bigcup_{i\in J} A_i/H.$$ 
We may argue that
each $A_i/H$ is connected and simply connected (as in $(1)$, we may argue that  $A_i/H^0$ is connected and simply connected, and then argue
that $A_i/H^0/\Gamma =A_i/H$ is connected and simply connected),
and $(A_i/H)\cap (A_j/H)$ is connected when $i\neq j$. Then by the Van-Kampen theorem, the above union set, hence $L_H$ is connected and simply connected. We leave this as an exercise, or we refer to the proof of \cite[Lemma 3.9]{L0}.

 {\bf (2b)}  Suppose next that $(\R^+)^l \neq 0$. The moment map
$$\phi\colon (\R^+)^l\times S' \times V'\longrightarrow  \overline U$$
is given by $\phi (r, z, x) = r + \psi (z)$, where $r = (r_1, \cdots, r_l)\in (\R^+)^l$, and the moment map $\psi$ on $S'$ is of the form $\psi = \sum_{i\in J} f_i\alpha_i$ similar to that in {\bf (2a)}. For $i\in J$, let 
$A'_i = \big\{(r, z, x)\in (\R^+)^l\times S' \times V'\,|\, r_j\geq 0                                                                                                                                                                                                                        \,\,\mbox{for all $j=1, \cdots, l$}, f_i > 0  \,\,\mbox{and}\,\, f_j\geq 0\,\,\mbox{for all $j\neq i$}\big\}$.
For $i = 1, \cdots, l$, let $B_i = \big\{(r, z, x)\in (\R^+)^l\times S' \times V'\,|\, r_i > 0, r_j\geq 0                                                                                                                                                                                                                      \,\,\mbox{for $j\neq i$}, \,\,\mbox{and}\,\, f_j\geq 0 \,\,\mbox{for all $j\in J$}\big\}$.   Then 
$$\qquad\left(\big((\R^+)^l\times S' \times V' - \{0\}\big)\cap\phi^{-1}(\overline U)\right)/H = \Big(\bigcup_{i\in J} A'_i/H\Big) \bigcup\Big(\bigcup_{i=1}^l B_i/H\Big).$$
Similar to the last case, we can show that this set, hence the link $L_H$ is connected and simply connected.
\end{proof}

The proofs of the following Lemma~\ref{c=a} and Proposition~\ref{TM=a} are the same as in \cite{L0}. For clarity and display of the ideas, we include the proofs here.

\begin{lemma}\label{c=a}
Let $(M, \omega)$ be a connected presymplectic  Hamiltonian $T$-manifold with  a clean $T$-action and proper moment map $\phi$. 
Let $c$ be a non-generic value, and $a$ be a generic value very near $c$.
Then $\pi_1(M_c)=\pi_1(M_a)$.
\end{lemma}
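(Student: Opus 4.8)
The plan is to reduce the comparison of $\pi_1(M_c)$ with $\pi_1(M_a)$ to a statement about removing a non-generic stratum from a stratified quotient, so that Lemma~\ref{remove} applies. First I would choose a small neighborhood of $c$ in $\phi(M)$ that meets exactly the non-generic face $\mathbf{F}$ containing $c$ and the chamber $U$ containing $a$, with $a$ taken so close to $c$ that $\phi^{-1}(\overline{U}_{\text{loc}})$ equivariantly deformation retracts onto $\phi^{-1}(G\cdot c) = \phi^{-1}(c)$ (for $G=T$, onto $\phi^{-1}(c)$ itself) by Corollary~\ref{corretract}. Passing to quotients, this gives a deformation retraction of $\phi^{-1}(\overline{U}_{\text{loc}})/T$ onto $\phi^{-1}(c)/T = M_c$, hence $\pi_1(M_c) \cong \pi_1\big(\phi^{-1}(\overline{U}_{\text{loc}})/T\big)$.

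Next I would observe that $\phi^{-1}(\overline{U}_{\text{loc}})/T$ is a stratified space whose strata are indexed by the orbit types $M_H$, and that $M_c/T$ is the union of those strata lying over $\mathbf{F}$, which form a closed subset. The complement $\phi^{-1}(\overline{U}_{\text{loc}})/T \setminus (M_c/T)$ deformation retracts (or maps by Ehresmann, as in Lemma~\ref{a=b}) onto $M_a$, since every value in $\overline{U}_{\text{loc}}$ off $\mathbf{F}$ lies in the single chamber $U$ and $\phi$ restricted there is a proper submersion. So it remains to strip off the strata over $\mathbf{F}$ one at a time, from the lowest-dimensional up, and each removal is valid precisely when the link of that stratum is connected and simply connected — which is exactly the content of Proposition~\ref{Tlink}. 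Applying Lemma~\ref{remove} repeatedly then yields $\pi_1\big(\phi^{-1}(\overline{U}_{\text{loc}})/T\big) \cong \pi_1(M_a)$, and combining with the first step finishes the proof.

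The main obstacle I expect is the bookkeeping needed to carry out the stratum-removal inductively: one must ensure that after removing some strata over $\mathbf{F}$, the remaining space is still a stratified space of the required form, that the stratum being removed at each stage is still \emph{closed} in the partially-reduced space, and that its link is unchanged (so Proposition~\ref{Tlink} still applies). A secondary subtlety is making the local picture precise — choosing $\overline{U}_{\text{loc}}$ small enough that it meets only $\mathbf{F}$ and $U$ among the faces, and that Corollary~\ref{corretract} applies — and checking compatibility of the local normal form decomposition of Proposition~\ref{Tlink} with this local retraction. These are essentially the same technical points handled in the symplectic case in \cite{L0, L}, so I would follow that template, citing it where the arguments are identical and only highlighting where the presence of the null directions $V$ (which by Theorem~\ref{nform} do not affect $\im(\phi)$) requires the extra care already built into Proposition~\ref{Tlink}.
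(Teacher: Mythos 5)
Your strategy is the paper's: retract a local piece $\phi^{-1}(\overline V)$ onto $\phi^{-1}(c)$ via Theorem~\ref{retract}/Corollary~\ref{corretract} to identify $\pi_1(M_c)$ with $\pi_1\big(\phi^{-1}(\overline V)/T\big)$, then strip off the non-generic strata using Proposition~\ref{Tlink} and Lemma~\ref{remove}, and finally retract what remains onto $M_a$ over the chamber. The bookkeeping issues you flag (closedness of the stratum removed at each stage, the links being local and unchanged) are the same ones the paper handles implicitly by removing strata inductively from the lowest-dimensional faces up.

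There is, however, one step that fails as you state it: you arrange the local picture so that the closed local chamber $\overline U_{\mathrm{loc}}$ meets only the face $\mathbf F$ containing $c$ and the chamber $U$, and you then claim that every value of $\overline U_{\mathrm{loc}}$ off $\mathbf F$ lies in $U$, so that after removing the strata over $\mathbf F$ the complement fibers over $U$ and retracts to $M_a$. This is only true when $c$ lies in the relative interior of a codimension-one wall of $\overline U$. For a general non-generic value --- say $c$ a vertex of $\phi(M)$, or a point where several walls (internal or external, coming from different stabilizer dimensions) intersect --- every neighborhood of $c$ intersected with $\overline U$ contains, besides part of $\mathbf F$, portions of the other non-generic faces of $\overline U$ adjacent to $c$. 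Removing only the strata over $\mathbf F$ leaves those non-generic fibers in place, so the remaining space does not lie over a single chamber, the Ehresmann argument of Lemma~\ref{a=b} does not apply to it, and it does not deformation retract onto $M_a$. The fix stays entirely within your framework and is what the paper does: set $B=\overline V\setminus V$ (all values in the closure of the local chamber that are not generic values of that chamber) and remove the strata over \emph{all} of $B$, face by face from the lowest dimension up; Proposition~\ref{Tlink} applies to each such face, and only after all of $B$ is gone is the complement $\phi^{-1}(V)/T$, which retracts onto $M_a$.
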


\begin{proof}
Let $O$ be a small open neighborhood of $c$ containing $a$, and $U$ be the chamber containing $a$.
Let $V=O\cap U$, and let $\overline V$ be the closure of $V$ in $O$. 
By Theorem~\ref{retract}, $\phi^{-1}(O)$, hence $\phi^{-1}(\overline V)$-equivariantly deformation retracts to  $\phi^{-1}(c)$, hence
$$\pi_1\big(\phi^{-1}(\overline V)/T\big)\cong\pi_1(M_c).$$
Let $B$ be the set of values in $\overline V - V$.
Using Lemma~\ref{remove}  and Proposition~\ref{Tlink} repeatedly in the right order, we get
$$\pi_1\big(\phi^{-1}(\overline V)/T\big)\cong
\pi_1\big(\phi^{-1}(\overline V)/T-\phi^{-1}(B)/T\big).$$
Since $\phi^{-1}(\overline V)/T-\phi^{-1}(B)/T$ deformation retracts to 
$\phi^{-1}(a)/T=M_a$, the claim follows.
\end{proof}

\begin{proposition}\label{TM=a}
Let $(M, \omega)$ be a connected presymplectic  Hamiltonian $T$-manifold with  a clean $T$-action and proper moment map $\phi$. 
Then $\pi_1(M/T)=\pi_1(M_a)$ for some value $a$.
\end{proposition}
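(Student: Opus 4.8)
\textbf{Proof proposal for Proposition~\ref{TM=a}.}
The plan is to connect the quotient $M/T$ to a reduced space $M_a$ by a sequence of operations that do not change the fundamental group, exactly as in the proof of the symplectic analogue \cite[Theorem 1.5]{L}, but now carrying the ``null slice'' $V$ through all the local computations. First I would pick the vertex $v$ of the polytope $\phi(M)\cap\mathfrak t^*_+=\phi(M)$ that is furthest from the origin (after a generic shift of the moment map we may assume the polyhedral set is a genuine polytope with a well-defined furthest vertex; alternatively, since the statement only asserts equality for \emph{some} value $a$, we are free to choose the value and the corresponding chamber conveniently). By Proposition~\ref{q2} applied with $N$ trivial — or rather by the torus special case of Proposition~\ref{mmv}/\ref{q2} — we already know $\pi_1(M)\cong\pi_1(M_v)$, but that is not quite what we want; instead I want to compare $\pi_1(M/T)$ directly to $\pi_1(M_a)$ for a generic $a$.

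The key steps, in order, are as follows. (i) Exhibit $M/T$ as a connected stratified space whose strata correspond to the orbit types $M_H$, refined by which face of $\phi(M)$ the image lies in; the link computations needed here are exactly those of Proposition~\ref{Tlink}. (ii) Using Lemma~\ref{remove}, successively remove from $\phi^{-1}(\overline U)/T$ (for $\overline U$ the closure of a chamber) the closed strata sitting over the non-generic faces of $\phi(M)$; Proposition~\ref{Tlink} guarantees all the relevant links are connected and simply connected, so each removal preserves $\pi_1$. After removing all such strata we are left with (a space homotopy equivalent to) $\phi^{-1}(U)/T$ for a chamber $U$, which deformation retracts onto $\phi^{-1}(a)/T=M_a$ for any $a\in U$. (iii) The remaining task is to pass from $\pi_1(\phi^{-1}(\overline U)/T)$ back up to $\pi_1(M/T)$: one covers $\phi(M)$ by the closures of chambers, and uses Theorem~\ref{retract} / Corollary~\ref{corretract} to show that crossing from one chamber closure into a neighboring one (across a non-generic face) does not change $\pi_1$ — the deformation-retraction of a neighborhood of $\phi^{-1}(\bf F)$ onto $\phi^{-1}(\bf F)$, combined with Lemma~\ref{c=a} (which already gives $\pi_1(M_c)=\pi_1(M_a)$) and a Van-Kampen argument, lets one glue the pieces. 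Concretely I would order the chambers so that their union is built up one at a time and apply Van-Kampen at each stage, with the intersection being (up to homotopy) a neighborhood of a $\phi^{-1}(\bf F)$, which is connected and simply connected in the relevant relative sense by Proposition~\ref{Tlink}.

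An alternative, cleaner route avoiding the chamber-by-chamber gluing: take $U$ to be the (unique) chamber whose closure contains the furthest vertex $v$, and show directly that $M/T$ deformation retracts, through a sequence of stratum removals and local retractions, onto $M_v$ and hence (by Lemma~\ref{c=a}) onto $M_a$ for $a\in U$ generic. The point is that the furthest vertex plays the role of a ``global attractor'' for the gradient flow of a suitable moment-map component $\phi^{\xi}$ (as in the proof of Proposition~\ref{mmv}), so one can flow all of $M$ toward $\phi^{-1}(v)$; combining this flow with the stratum-removal argument on the quotient gives $\pi_1(M/T)\cong\pi_1(\phi^{-1}(\overline U)/T)\cong\pi_1(M_a)$.

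The main obstacle I expect is step (iii): making the transition between adjacent chamber closures rigorous. Proposition~\ref{Tlink} and Lemma~\ref{c=a} handle the \emph{local} picture near one non-generic face, but assembling these local statements into a global equality $\pi_1(M/T)=\pi_1(M_a)$ requires care about the order in which chambers are glued, about the fact that a non-generic face may separate more than two chambers, and about ensuring that the Van-Kampen intersections remain connected and simply connected throughout. The properness of $\phi$ (hence local finiteness of the polyhedral decomposition) is what makes this bookkeeping terminate, and Theorem~\ref{retract}/Corollary~\ref{corretract} are what make each local step an honest homotopy equivalence rather than merely a statement about reduced spaces; the symplectic prototype in \cite{L} shows this can be pushed through, and the ``null slice'' $V$, having no effect on the moment-map image, only enlarges the fibers of the various bundles without affecting connectivity or simple-connectivity of the links.
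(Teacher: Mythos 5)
Your primary route---stratify $M/T$ by orbit type refined over the faces of $\phi(M)$, remove the closed strata over non-generic faces using Lemma~\ref{remove} with the links supplied by Proposition~\ref{Tlink}, do the local deformation retractions via Theorem~\ref{retract} and Corollary~\ref{corretract}, and handle the passage between adjacent chambers with Lemma~\ref{a=b} and Lemma~\ref{c=a}---is essentially the paper's own proof, which likewise alternates ``deforming'' and ``removing'' and delegates exactly the chamber-by-chamber bookkeeping you flag in step (iii) to the argument of \cite[Theorem 1.6]{L0} for $G=T$. One caveat: in this proposition $M$ is not assumed compact, only $\phi$ is proper, so $\phi(M)$ is in general an unbounded polyhedral set; no shift of the moment map turns it into a polytope, there need be no ``furthest vertex,'' and your alternative route via a gradient-flow global attractor onto $\phi^{-1}(v)$ (as in Proposition~\ref{mmv}) is not available here. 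Since your main argument never actually uses that vertex---the statement only requires some chamber and some value $a$---this is a removable blemish in the framing rather than a gap in the proof, but the opening reduction to a ``genuine polytope'' should be dropped.
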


\begin{proof}
Using deforming (Theorem~\ref{retract} and Corollary~\ref{corretract}) and removing (Lemma~\ref{remove} and Proposition~\ref{Tlink}) alternately, we can achieve the proof.  There can be different processes and different choices of the values $a$s'. Follow the same arguments as in the proof of \cite[Theorem 1.6]{L0} for $G=T$.
\end{proof}

Theorem 4(a) follows from Lemmas~\ref{a=b}, \ref{c=a}, and Proposition~\ref{TM=a}.

\subsection{When $G$ is nonabelian}
\
\medskip

In this part, we prove Theorem~\ref{C} for the case when $G$ is nonabelian. We state the theorem in the nonabelian case as follows.

\begin{theorem 4(b)}
Let $(M, \omega)$ be a connected presymplectic Hamiltonian $G$-manifold with a clean $G$-action and proper moment map $\phi$, where $G$ is nonabelian. Then
we have natural isomorphisms $\pi_1(M/G)\cong\pi_1(M_a)$ for all $a\in \phi(M)$.
\end{theorem 4(b)}

In this part, without specification, $G$ always denotes a connected compact {\it nonabelian} Lie group.

\smallskip

We first prove the following facts about Lie groups which will be important to us in the sequel.

\begin{proposition}\label{q}
Let $G$ be a connected compact semisimple nonabelian Lie group.
Let $H\subset G$ be a closed subgroup with Lie algebra $\mathfrak h$,
let $\mathfrak f =\mathfrak g/\mathfrak h$, and we may view $\mathfrak f$ as a direct summand of $\mathfrak g$ complementary to $\mathfrak h$. Let $\mathfrak a$ be an ideal of $\mathfrak g$, let 
$\mathfrak p = \mathfrak a/\mathfrak a\cap\mathfrak h$, and let 
$\mathfrak q = \mathfrak f/\mathfrak p$. 
\begin{enumerate}
\item If $\mathfrak q = 0$, then either $\mathfrak a =\mathfrak g$ or 
$H\subseteq G$ is a nonabelian subgroup. 
\item If $\mathfrak q\neq 0$, then $\dim (\mathfrak q)\geq 2$, and for the
adjoint action of $H$ on $\mathfrak q$, the smallest normal subgroup of $H$ containing the identity component of $H$ and those elements which 
have nonzero fixed points is $H$ itself. If 
$\dim (\mathfrak q) = 2$, then $\mathcal S(\mathfrak q)/H$ is a point, where $\mathcal S(\mathfrak q)$ denotes the sphere in $\mathfrak q$.
\end{enumerate}
\end{proposition}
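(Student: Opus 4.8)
The plan is to analyze everything in terms of the root-space decomposition of the semisimple Lie algebra $\mathfrak g = \mathfrak h \oplus \mathfrak m$, treating the ideal $\mathfrak a$ as a sum of simple factors of $\mathfrak g$. Since $\mathfrak g$ is semisimple, $\mathfrak g = \mathfrak g_1 \oplus \cdots \oplus \mathfrak g_r$ with each $\mathfrak g_i$ simple, and every ideal $\mathfrak a$ is a sub-sum $\mathfrak g_{i_1}\oplus\cdots\oplus\mathfrak g_{i_s}$. Write $\mathfrak g = \mathfrak a \oplus \mathfrak b$ for the complementary ideal $\mathfrak b$. The quotient $\mathfrak q = \mathfrak m/\mathfrak p = (\mathfrak g/\mathfrak h)/(\mathfrak a/(\mathfrak a\cap\mathfrak h))$ is naturally identified with $\mathfrak b/(\mathfrak b\cap(\mathfrak h + \mathfrak a)) = \mathfrak b / \pi_{\mathfrak b}(\mathfrak h)$, where $\pi_{\mathfrak b}$ is the projection onto $\mathfrak b$; this identification is $H$-equivariant because $\mathfrak b$ is $\mathrm{Ad}(H)$-stable. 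So $\mathfrak q = 0$ means $\mathfrak h$ projects onto $\mathfrak b$, and the $H$-action on $\mathfrak q$ is the $H$-action on $\mathfrak b$ modulo the image of $\mathfrak h$.

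For part (1): if $\mathfrak q = 0$ then $\pi_{\mathfrak b}(\mathfrak h) = \mathfrak b$, so $\mathfrak h + \mathfrak a = \mathfrak g$ and $\dim\mathfrak h \ge \dim\mathfrak b$. If $\mathfrak a \neq \mathfrak g$ then $\mathfrak b \neq 0$ contains at least one simple factor, so $\dim\mathfrak h \ge \dim\mathfrak b \ge 3$; moreover $\mathfrak h$ surjects onto the semisimple Lie algebra $\mathfrak b$, and I claim $\mathfrak h$ cannot be abelian — an abelian Lie algebra cannot surject (as Lie algebras) onto a nonzero semisimple Lie algebra, since the image would be an abelian ideal, forcing $\mathfrak b = [\mathfrak b,\mathfrak b] = 0$. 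Hence $H$ is nonabelian. (I should double-check the statement really wants "$H$ nonabelian" rather than "$\mathfrak h$ nonabelian"; since $\mathfrak h$ nonabelian $\Rightarrow$ $H$ nonabelian, this direction is automatic.)

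For part (2): suppose $\mathfrak q \ne 0$, i.e. $\mathfrak b \ne 0$ and $\pi_{\mathfrak b}(\mathfrak h)$ is a proper subspace. The first claim $\dim\mathfrak q \ge 2$: $\mathfrak q$ carries an $\mathrm{Ad}(H)$-invariant inner product (restriction of a bi-invariant metric), hence if $\dim\mathfrak q = 1$ then $H^0$ acts trivially on $\mathfrak q$, so $\mathfrak h$ acts trivially, meaning $[\mathfrak h, \mathfrak b] \subseteq \pi_{\mathfrak b}(\mathfrak h)$ — combined with the bracket structure of the simple ideals inside $\mathfrak b$ this is too restrictive; more cleanly, a one-dimensional quotient of $\mathfrak m$ on which $\mathfrak h$ acts trivially would give $\mathfrak g$ a codimension-one ideal containing $[\mathfrak g,\mathfrak g]=\mathfrak g$, contradiction. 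The "smallest normal subgroup" claim is exactly the hypothesis needed to invoke Theorem~\ref{arm} later, and I would prove it by showing the $H^0$-action on the sphere $S(\mathfrak q)$ has no fixed points unless forced, and that the quotient $H/H^0$ contributes nothing; the cleanest route is: the identity component $H^0$ is already one of the ingredients of that smallest normal subgroup $H'$, and one shows $H' = H$ by checking that $H/H^0$ is generated by elements having fixed points on $S(\mathfrak q)$ (every element of a compact group acting on a sphere of dimension $\ge 1$... no — this is false in general, so one must use the specific structure: $\mathfrak q$ sits inside $\mathfrak g$ and elements of $H$ act by $\mathrm{Ad}$, which are special orthogonal, and one uses that the $H$-representation $\mathfrak q$, being a subquotient of the adjoint representation, has the property that $S(\mathfrak q)/H$ is connected — this will need the finiteness/structure argument). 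Finally, if $\dim\mathfrak q = 2$: then $\mathfrak q$ is an oriented $2$-plane, $H^0$ acts through $SO(2) = S^1$, and the "smallest normal subgroup = $H$" statement just proved forces the $H$-action on $S(\mathfrak q) = S^1$ to be transitive, so $S(\mathfrak q)/H$ is a point.

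The main obstacle will be the "smallest normal subgroup is $H$ itself" part of (2): it is a genuine statement about the structure of the subquotient representation $\mathfrak q$ of the adjoint representation, and one needs to rule out the bad scenario where some component of $H$ acts freely on $S(\mathfrak q)$. I expect the resolution uses that $\mathfrak q \subseteq \mathfrak b$ with $\mathfrak b$ a sum of simple ideals: any $\mathrm{Ad}(h)$ fixing $\mathfrak b$ pointwise would centralize $\mathfrak b$, and since $\mathfrak b$ is semisimple its centralizer in the connected group is controlled, so one can always find, within each coset of $H^0$, an element with a nonzero fixed vector in $\mathfrak q$ — but making this rigorous for a general closed subgroup $H$ is the delicate point, and may well be where the paper defers to (or mirrors) the analogous lemma in \cite{L0}.
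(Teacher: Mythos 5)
Your identification of $\mathfrak q$ with $\mathfrak b/\pi_{\mathfrak b}(\mathfrak h)$ ($\mathfrak b$ the complementary ideal to $\mathfrak a$), your proof of part (1) via the fact that an abelian $\mathfrak h$ cannot surject onto the nonzero semisimple $\mathfrak b$, and your exclusion of $\dim\mathfrak q=1$ are sound (and, incidentally, avoid the componentwise splitting of $\mathfrak h$ that the paper's proof relies on). But the heart of part (2) is missing: you never prove that every coset of $H^0$ in $H$ contains an element with a nonzero fixed vector in $\mathfrak q$, which is exactly the ``smallest normal subgroup is $H$'' statement and is what makes Theorem~\ref{arm} applicable later in Proposition~\ref{Glink}; you explicitly defer this as ``the delicate point.'' The paper does not defer it: it writes $G\cong (S_1\times\cdots\times S_k)/F$ with $S_i$ simple, decomposes $H$ up to finite central elements as $H_1\times\cdots\times H_k$ so that $\mathfrak q=\bigoplus_i \mathfrak s_i/\mathfrak h_i$ over the simple factors $\mathfrak s_i$ not contained in $\mathfrak a$, and checks component by component that each $g=(g_1,\dots,g_k)\in H$ fixes a nonzero vector of $\mathfrak q$: either $g_i=1$, which fixes $\mathfrak s_i/\mathfrak h_i$ pointwise, or $g_i$ lies in a maximal torus $T_i$ of $S_i$ but not in a maximal torus of $H_i$, and a suitable $Y\in\mathfrak t_i$ has nonzero component in $\mathfrak s_i/\mathfrak h_i$ fixed by $\mathrm{Ad}(g)$. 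Some argument of this kind must be supplied; as you yourself note, ``$H$ acts orthogonally on a sphere'' is not enough.

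Your treatment of $\dim\mathfrak q=2$ is also incorrect as written: the property ``the smallest normal subgroup of $H$ containing $H^0$ and the elements with nonzero fixed points equals $H$'' does not force transitivity of $H$ on $S(\mathfrak q)$ --- a trivial action satisfies the former and violates the latter. What is actually needed, and what the paper proves, is that $H$ already contains a one-parameter subgroup acting transitively on the circle $S(\mathfrak q)$: when $\dim\mathfrak q=2$ the Cartan subalgebras of $\mathfrak h_i$ and $\mathfrak s_i$ coincide, $\mathfrak q$ is identified with a two-dimensional root space of $\mathfrak s_i$, and the corresponding coroot direction $Z$ lies in $\mathfrak h_i$, so the circle it generates in $H$ rotates $S(\mathfrak q)$ transitively. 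Your sketch could be repaired along your own lines --- if $\mathfrak h$ acted trivially on the two-dimensional $\mathfrak q$, then $\pi_{\mathfrak b}(\mathfrak h)$ would be an ideal of $\mathfrak b$ with a two-dimensional, hence non-semisimple, quotient, which is impossible, so the image of $\mathfrak h$ in $\mathfrak{so}(2)$ is nonzero and $H^0$ surjects onto $SO(2)$ --- but that argument is not in your proposal, and the deduction you give in its place is a non sequitur.
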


\begin{proof}
Since $G$ is semisimple, we can split
$$\mathfrak g = \bigoplus_{i=1}^k \mathfrak s_i,$$ 
where each $\mathfrak s_i$ is a simple ideal
with $[\mathfrak s_i, \mathfrak s_j] = 0$ for $i\neq j$, and span$[\mathfrak s_i, \mathfrak s_i] =\mathfrak s_i$ (see \cite[Theorem 5.18]{S}).
Since $\mathfrak a$ is an ideal, it is a direct sum of some factors of 
$\mathfrak g$, with no loss of generality, we assume 
$$\mathfrak a = \bigoplus_{i=1}^l \mathfrak s_i \quad\mbox{with $l\leq k$}.$$ Since $H\subset G$ is a closed subgroup, 
$$\mathfrak h =\bigoplus_{i=1}^k \mathfrak h_i,$$ 
where $\mathfrak h_i\subseteq \mathfrak s_i$ is a subalgebra for each $i$.
Then $\mathfrak a\cap\mathfrak h = \bigoplus_{i=1}^l \mathfrak h_i$, so 
$$\mathfrak p = \mathfrak a/\mathfrak a\cap\mathfrak h = \bigoplus_{i=1}^l (\mathfrak s_i/\mathfrak h_i).$$ 
While
$$\mathfrak f = \mathfrak g/\mathfrak h = \bigoplus_{i=1}^k (\mathfrak s_i/\mathfrak h_i),$$  
so 
$$\mathfrak q =\mathfrak f /\mathfrak p = \bigoplus_{i=l+1}^k (\mathfrak s_i/\mathfrak h_i).$$ 

  {\bf  (1)}  Assume that $\mathfrak q =0$. Then either $\mathfrak f =0$ which means that $H=G$ hence $H$ is nonabelian, or $l=k$ which means that
$\mathfrak a =\mathfrak g$, or $l < k$ and $\mathfrak s_i =\mathfrak h_i$
for all $l+1\leq i\leq k$, which implies that $H$ is nonabelian (since $\mathfrak s_i$ is nonabelian).

 {\bf (2)}  Assume that $\mathfrak q \neq 0$. Then $l < k$, and there is at least one $i$ with $l+1\leq i\leq k$ so that $\mathfrak h_i\subsetneq \mathfrak s_i$. For each such $i$ with $\mathfrak h_i\subsetneq \mathfrak s_i$, since 
$\mathfrak s_i$ is nonabelian and $\mathfrak h_i$ is a subalgebra, $\dim(\mathfrak s_i/\mathfrak h_i)\geq 2$, so 
$\dim(\mathfrak q)\geq 2$.

Let $S_i = \exp\mathfrak s_i$. Then $G \cong (S_1\times\cdots\times S_k)/F$, where $F$ is a finite central subgroup of $G$ (\cite[Theorem 5.22]{S}).
So up to dividing a finite central subgroup, $H=H_1\times\cdots\times H_k$
with $H_i\subset S_i$ a subgroup for $1\leq i\leq k$. Let $H^0=H_1^0\times\cdots\times H_k^0$ be the identity component of $H$. There are finitely
many elements $gs$' of the form $g=(g_1, \cdots, g_k)\in H/H^0$, where for each $j$, $g_j$ is either $1$ or $g_j\notin H_j^0$.  Now consider a fixed $i$ above with  $l+1\leq i\leq k$ and $\mathfrak h_i\subsetneq \mathfrak s_i$. If $g_i=1$, then $Ad(g_i)X = X$ for all $X\in \mathfrak s_i/\mathfrak h_i$, hence $Ad(g)X=Ad(g_1, \cdots, g_k) X =Ad(g_1)\cdots Ad(g_k)X = X$ since $Ad(g_j)X=X$ for $j\neq i$ (due to the fact $[\mathfrak s_i, \mathfrak s_j]=0$ when $i\neq j$). If
$g_i\notin H_i^0$, then $H_i$ is nontrivial, and $g_i$ is in a maximal torus $T_i$ of $S_i$ but not
in a maximal torus of $H_i^0$, and $Ad(g_i)Y= Y$ for all 
$0\neq Y\in \mathfrak t_i=$ Lie$(T_i)$. So $Ad(g_i)Y' = Y'$, where $Y'\neq 0$ is the component of $Y$ in $\mathfrak s_i/\mathfrak h_i$, then similar to
the above,  $Ad(g) Y' = Y'$. We have shown that any $g\in H/H^0$ has a nonzero fixed point in $\mathfrak q$. So the smallest normal subgroup of $H$ containing the identity component of $H$ and all those elements which have nonzero fixed points is $H$ itself.

Now assume $\dim(\mathfrak q) = 2$. Then there is exactly one $i$ with 
$l+1\leq i\leq k$ such that $\mathfrak h_i\subsetneq \mathfrak s_i$, and
$\mathfrak q = \mathfrak s_i/\mathfrak h_i$, with $\mathfrak h_i\neq 0$. If we consider the real root space decomposition of respectively $\mathfrak h_i$ and $\mathfrak s_i$, 
we can see that the Cartan subalgebra of $\mathfrak h_i$ and of 
$\mathfrak s_i$ must be the same, and the space $\mathfrak q$ can be identified with a $2$-dimensional (nonzero) root space of $\mathfrak s_i$.
Let $X$ and $Y$ be  two suitable vectors in this $2$-dimensional root space. Then there is a nonzero element $Z$ in the Cartan subalgebra
of $\mathfrak s_i$ so that $X$, $Y$, and $Z$ generate a Lie algebra isomorphic to that of $SU(2)$ (see p127 of \cite{S}).  The one parameter subgroup of $H$ generated by $Z$ acts on $\mathcal S(\mathfrak q)$ transitively, hence  $\mathcal S(\mathfrak q)/H$ is a point.
\end{proof}

Now we proceed with the steps of the proof of Theorem 4(b).

\begin{proposition}\label{Glink}
Let $(M, \omega)$ be a connected presymplectic Hamiltonian $G$-manifold with a clean $G$-action and proper moment map $\phi$.   Let 
$\mathcal C$ be the (closed) central face of $\mathfrak t^*_+$, and assume that $\mathcal C\cap\phi (M)\neq \emptyset$ and
that $\mathcal C$ is not the only face of $\mathfrak t^*_+$ which intersects $\phi(M)$. For each orbit type $M_{(H)}$ such that $M_{(H)}\cap\phi^{-1}(\mathcal C)\neq\emptyset$, let $L_H$ be the
link of $\left(M_{(H)}\cap\phi^{-1}(\mathcal C)\right)/G$ in $M/G$.
 Then $L_H$ is connected and simply connected.
\end{proposition}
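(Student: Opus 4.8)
The plan is to compute $L_H$ from the local normal form, identify it with the quotient by $H$ of a sphere in a linear $H$-representation, and then prove connectedness and simple connectedness of that quotient by feeding the Lie-theoretic input of Proposition~\ref{q} and the slice arguments of Proposition~\ref{Tlink} into Theorem~\ref{arm}. So I would first fix $x\in M_{(H)}\cap\phi^{-1}(\mathcal C)$ and apply Theorem~\ref{nform}: since $\phi(x)\in\mathcal C$ is fixed by the coadjoint action, $G_{\phi(x)}=G$ and $\mathfrak m=\mathfrak g/\mathfrak h$, so a $G$-invariant neighborhood of $G\cdot x$ is $A=G\times_H(\mathfrak q^*\times S\times V)$ with $\mathfrak q=\mathfrak m/\mathfrak p$, $\mathfrak p=\mathfrak n/(\mathfrak n\cap\mathfrak h)$, and $\phi_A([g,a,s,v])=Ad(g)^*\big(\phi(x)+a+\psi(s)\big)$.

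Passing to the $G$-quotient, $A/G\cong(\mathfrak q^*\times S\times V)/H$ is a neighborhood of $(G\cdot x)/G$ in $M/G$. Since $H$ acts trivially on the image of the center $\mathfrak z=\mathfrak z(\mathfrak g)$, I would split $H$-equivariantly $\mathfrak q=\mathfrak q_0\oplus\mathfrak q_1$ with $\mathfrak q_0$ the image of $\mathfrak z$ and $\mathfrak q_1$ a complement, and $S=S^H\oplus S'$, $V=V^H\oplus V'$. Using $\psi|_{S^H}=0$ and the fact that adding any nonzero vector of $\mathfrak q_1^*$ to the central value $\phi(x)$ produces a value lying off $\mathcal C$, one checks that near $G\cdot x$ the stratum $M_{(H)}\cap\phi^{-1}(\mathcal C)$ is $(G/H)\times\mathfrak q_0^*\times S^H\times V^H$, so that its link in $M/G$ is $L_H\cong S(W)/H$ with $W:=\mathfrak q_1^*\oplus S'\oplus V'$ (here $S(\cdot)$ denotes the unit sphere), $H$ acting on $\mathfrak q_1^*$ coadjointly, on $S'$ symplectically, and on $V'$ linearly. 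Theorem~\ref{conv} makes $\phi^{-1}(\mathcal C)$ connected, so this picture is the same at every $x$.

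Next I would establish that $S(W)/H$ is connected and simply connected; write $H^0$ for the identity component and $\Gamma=H/H^0$. First, $\mathfrak q_1^*\oplus S'\neq 0$: otherwise $\psi|_{S'}=0$ and, as $V'$ does not affect the moment map (Theorem~\ref{nform}), $\phi$ would map a neighborhood of $G\cdot x$ into $\phi(x)+\mathfrak q_0^*\subseteq\mathcal C$, and then convexity of the polyhedral set $\phi(M)\cap\mathfrak t^*_+$ (Theorem~\ref{conv}) would force $\phi(M)\cap\mathfrak t^*_+\subseteq\mathcal C$, contradicting the hypothesis that $\mathcal C$ is not the only face meeting $\im(\phi)$. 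Hence $\dim W\geq 2$: $\dim\mathfrak q_1\geq 2$ when $\mathfrak q_1\neq 0$ by Proposition~\ref{q} (applied after the standard reduction to the semisimple quotient $G/Z(G)$, through which the $H$-action on $\mathfrak q_1$ factors, with $\mathfrak a$ the semisimple part of the null ideal), and $\dim S'\geq 2$ when $\mathfrak q_1=0$ since $S'$ is then a nonzero symplectic space; in particular $S(W)$ is connected. If $\dim W=2$ the link is a point: for $W=\mathfrak q_1^*$ by Proposition~\ref{q}, and for $W=S'\cong\C$ because $\psi|_{S'}\neq 0$ makes $H^0$ act on $\C$ by rotation. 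If $\dim W\geq 3$, $S(W)$ is simply connected, hence so is $S(W)/H^0$ by Theorem~\ref{arm} (the relevant subgroup there is all of $H^0$); finally $L_H=\big(S(W)/H^0\big)/\Gamma$ is simply connected by a second application of Theorem~\ref{arm}, because every element of $\Gamma$ fixes a point of $S(W)/H^0$ — when $\mathfrak q_1\neq 0$ this is (the proof of) Proposition~\ref{q}, which shows each element of $\Gamma$ fixes a nonzero vector of $\mathfrak q_1$, and when $\mathfrak q_1=0$ it is the argument in the proof of Proposition~\ref{Tlink}, since each element of $\Gamma$ acts on $S'$ through a torus and so fixes a point of $S(S')/H^0$.

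The step I expect to be the main obstacle is the uniform treatment of the mixed representation $W=\mathfrak q_1^*\oplus S'\oplus V'$: the $\mathfrak q_1^*$-summand is the genuinely nonabelian ingredient, absent from the torus case of Proposition~\ref{Tlink}, and requires both Proposition~\ref{q} and the reduction of a compact nonabelian $G$ to its semisimple quotient, together with the bookkeeping separating the central directions $\mathfrak q_0$ (tangent to $\mathcal C$) from $\mathfrak q_1$, whereas the $S'$- and $V'$-summands reproduce the linear-slice situation of Proposition~\ref{Tlink}. Getting these to cooperate — in particular verifying $\mathfrak q_1^*\oplus S'\neq 0$ and checking the low-dimensional degenerate configurations ($\dim W=2$) — is where the care lies; where convenient, the final simple connectedness can alternatively be obtained by a Van-Kampen decomposition of $S(W)$ into pieces on which a single weight block dominates, exactly as in Proposition~\ref{Tlink} and \cite[Lemma 3.9]{L0}.
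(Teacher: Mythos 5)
Your route is the paper's route: apply Theorem~\ref{nform} at $x\in\phi^{-1}(\mathcal C)$ (where $G_{\phi(x)}=G$), split $\mathfrak q$ into the central directions (your $\mathfrak q_0$, the paper's $\mathfrak q_2$) and the part coming from the semisimple factor ($\mathfrak q_1$), identify the stratum near $G\cdot x$ and hence $L_H\cong S(\mathfrak q_1^*\times S'\times V')/H$, and then get simple connectedness from Theorem~\ref{arm} applied twice, with the fixed-point input supplied by Proposition~\ref{q}(2) when $\mathfrak q_1\neq 0$ and by the ``each component acts on $S'$ through a circle'' argument otherwise, treating the two-dimensional degenerate cases as a point. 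Up to relabelling, those parts coincide with the paper's proof.

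The genuine gap is in the one place where you depart from the paper, and it is load-bearing: it is your only use of the hypothesis that $\mathcal C$ is not the only face meeting $\im(\phi)$. You claim that if $\mathfrak q_1^*=0$ and $\psi|_{S'}=0$, then ``$\phi$ maps a neighborhood of $G\cdot x$ into $\mathcal C$, and convexity of $\phi(M)\cap\mathfrak t^*_+$ forces $\phi(M)\cap\mathfrak t^*_+\subseteq\mathcal C$.'' Convexity of the global image gives no such propagation: points of $\phi(M)\cap\mathfrak t^*_+$ arbitrarily close to $\phi(x)$ may be values of points of $M$ lying in the fiber $\phi^{-1}(G\cdot\phi(x))$ far from the tube around $G\cdot x$, so the global image can leave $\mathcal C$ at $\phi(x)$ even though the image of your local model does not. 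What makes the conclusion true is cleanness, not convexity: if $\phi(G\cdot U)\subseteq\mathcal C$ for a small neighborhood $U$ of $x$, then $\phi^\xi$ is constant on $G\cdot U$ for every $\xi$ in the semisimple part $[\mathfrak g,\mathfrak g]$, so $[\mathfrak g,\mathfrak g]\subseteq\mathfrak n_U=\mathfrak n_x$, which under a clean action equals the null ideal $\mathfrak n$; Proposition~\ref{aspan} then places the affine span of $\phi(M)$ inside $\phi(x)+\mathfrak n^{\circ}\subseteq\mathcal C$, contradicting the hypothesis. (This cleanness mechanism is also what underlies the paper's handling of the case $\mathfrak q_1^*=0$: there Proposition~\ref{q}(1), together with $\mathfrak n_1\subsetneq\mathfrak g_1$ obtained from the hypothesis and Proposition~\ref{aspan}, forces $\mathfrak h$ to contain a nonabelian ideal not lying in $\mathfrak n$, which must therefore act on $S'$ with nontrivial moment map, giving $\dim S'\geq 4$.) Without repairing this step you have not excluded the genuinely bad configuration $W=V'$ with $\mathfrak h$ acting trivially, where $S(V')/H$ need not be simply connected. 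Two smaller points: what your dimension-two argument actually needs is the stronger statement ``$\mathfrak q_1^*\neq 0$ or $\psi|_{S'}\neq 0$,'' not merely $\mathfrak q_1^*\oplus S'\neq 0$, so state and prove that version; and if you use Proposition~\ref{q} via the quotient $G/Z(G)$ rather than the paper's splitting $G=(G_1\times T_c)/F$, you should still record why the relevant ideal $\mathfrak a$ is proper, which again comes from the same hypothesis via Proposition~\ref{aspan}.
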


\begin{proof}
We write $G=(G_1\times T_c)/F$,
where $G_1$ is a connected compact semisimple Lie group, $T_c$ is a connected (central) torus, and $F$ is a finite central subgroup (\cite[Theorem 5.22]{S}). Let $\mathfrak g_1$ and $\mathfrak t_c$ be respectively the Lie algebras of $G_1$ and $T_c$,
and $\mathfrak g_1^*$ and $\mathfrak t_c^*$ be their dual Lie algebras.

 By Theorem~\ref{nform}, a neighborhood of a $G$-orbit in $\phi^{-1}(\mathcal C)$ with stabilizer group $(H)$ is isomorphic to 
$$A=G\times_H(\mathfrak q^*\times S\times V),$$ 
where $\mathfrak q^*$, $S$ and $V$ are as explained in Theorem~\ref{nform} (and the paragraph above it). Up to dividing a finite {\it central} subgroup, $H=H_1\times T_1$, where $H_1\subset G_1$ and $T_1\subset T_c$ are closed subgroups (since $H$ is closed). Let $\mathfrak h$, $\mathfrak h_1$ and $\mathfrak t_1$ be respectively the Lie algebras of $H$, $H_1$ and $T_1$. Under the splitting of $G$ above, let the null ideal 
$$\mathfrak n = \mathfrak n_1\oplus \mathfrak n_2, \,\,\,\mbox{where $\mathfrak n_1\subseteq\mathfrak g_1$ and $\mathfrak n_2
\subseteq\mathfrak t_c$ are ideals}.$$
Then 
$$\mathfrak p = \mathfrak n/\mathfrak n\cap\mathfrak h = \big(\mathfrak n_1/(\mathfrak n_1\cap\mathfrak h_1)\big)\oplus \big(\mathfrak n_2/(\mathfrak n_2\cap\mathfrak t_1)\big) \triangleq\colon \mathfrak p_1\oplus\mathfrak p_2.$$ The stabilizer group under the coadjoint action of each point on the central face $\mathcal C$ is $G$, so 
$$\mathfrak m = \mathfrak g/\mathfrak h = (\mathfrak g_1/\mathfrak h_1)\oplus (\mathfrak t_c/\mathfrak t_1)\triangleq\colon\mathfrak m_1\oplus\mathfrak m_2.$$ 
Then 
$$\mathfrak q=\mathfrak m/\mathfrak p = (\mathfrak m_1/\mathfrak p_1)\oplus (\mathfrak m_2/\mathfrak p_2) \triangleq\colon \mathfrak q_1\oplus\mathfrak q_2.$$
So we can write
$$A = G\times_H\big((\mathfrak q_1^*\times\mathfrak q_2^*)\times S\times V\big).$$
By the moment map description on $A$, we have 
$$A_{(H)}\cap\phi^{-1}(\mathcal C) = G\times_H\big(\mathfrak q_2^*\times S^H\times V^H\big),$$ 
where $S^H$ and $V^H$ are respectively the subspaces
of $S$ and $V$ fixed by $H$. So 
$$\big(A_{(H)}\cap\phi^{-1}(\mathcal C)\big)/G =\big(\mathfrak q_2^*\times S^H\times V^H\big)/H =\mathfrak q_2^*\times S^H\times V^H.$$ 
While $$A/G =\big(\mathfrak q_2^*\times S^H\times V^H\big)\times\big((\mathfrak q_1^*\times S'\times V')/H\big),$$ 
where $S'$ and $V'$
are respectively the complementary subspaces of $S^H$ in $S$ and $V^H$   
in $V$. The link of $\big(A_{(H)}\cap\phi^{-1}(\mathcal C)\big)/G$ in 
$A/G$ is
$$L_H = \mathcal S(\mathfrak q_1^*\times S'\times V')/H,$$
where $\mathcal S(\cdot)$ denotes the sphere in the corresponding space.
This is the same as the link of $\big(M_{(H)}\cap\phi^{-1}(\mathcal C)\big)/G$ in $M/G$.
By assumption, $\phi(M)$ intersects at least another higher dimensional face of  $\mathfrak t^*_+$. Let $a\in\phi(M)$ be 
on this higher dimensional face, then $G_a\cap G_1\subsetneq G_1$ (otherwise,
$G_1\subseteq G_a$, then $G=G_a$, so $a$ is in $\mathcal C$, a contradiction). Since the coadjoint 
orbit  $G\cdot a$ lies on the affine space spanned by $\mathfrak n^{\circ}$ (by Proposition~\ref{aspan}), we have $\mathfrak n\subseteq\mathfrak g_a=\, $Lie$(G_a)$, hence $\mathfrak n_1\subsetneq \mathfrak g_1$. Then by Proposition~\ref{q} applied for the semisimple group $G_1$, we have $2$ possibilities: 
\begin{enumerate}
\item   $\mathfrak q_1^* = 0$ and $H$ is nonabelian, and 
\item  $\dim(\mathfrak q_1^*) \geq 2$ and we have the claims in part $(2)$ of Proposition~\ref{q}. 
\end{enumerate}
First assume we are in case $(1)$. Then $S'$
must be a nontrivial symplectic $H$-representation with a nontrivial moment map, hence is of dimension at least $4$.
So $\mathcal S(S'\times V')$ is connected and simply connected. By Theorem~\ref{arm}, $\mathcal S(S'\times V')/H^0$ is connected and simply connected, where
$H^0$ is the identity component of $H$. Since each element in 
$\Gamma=H/H^0$ acts on $S' \cong \C\times\cdots \times \C$ as
an element of a circle, each element in $\Gamma$   has a fixed point in $\mathcal S(S')/H^0$. By Theorem~\ref{arm} again, $\big(\mathcal S(S'\times V')/H^0\big)/\Gamma =\mathcal S(S'\times V')/H = L_H $ is connected and simply connected.
Now assume we are in case $(2)$. If 
$\dim (\mathfrak q_1^*\times S'\times V') >2$, then $\mathcal S(\mathfrak q_1^*\times S'\times V')$ is connected and simply connected. Note that
the central component $T_1$ of $H$ fixes $\mathfrak q_1^*$. By
Proposition~\ref{q} $(2)$ and Theorem~\ref{arm}, $L_H$ is connected and simply connected. If $\dim (\mathfrak q_1^*\times S'\times V') = 2$, i.e.,
$S'=V'=0$ and $\dim (\mathfrak q_1^*) = 2$, then by Proposition~\ref{q} 
$(2)$, $L_H$ is a point hence is connected and simply connected.
\end{proof}

The proofs of the following Lemmas~\ref{o-b}, \ref{Gisom} and \ref{Gisom'}  are the same as in \cite{L0} and \cite{L}. In the proof of Lemma~\ref{o-b}, we use the cross section theorem Theorem~\ref{cross}. Here we omit their proofs.

\begin{lemma}\label{o-b}
 Let $(M, \omega)$ be a connected presymplectic Hamiltonian $G$-manifold with a clean $G$-action and proper moment map $\phi$. Let $c\in\tau\cap\phi(M)$, where $\tau\neq\tau^P$ is a
face of $\mathfrak t^*_+$, $\tau^P$ being the principal face, and let $a$ be a generic value on $\tau^P$ very near $c$.
Let $O$ be a small open invariant neighborhood
of $c$ in $\mathfrak g^*$ containing $a$. Let $B$ be the set of values in $O\cap \mathfrak t^*_+$  other than those on the chamber of generic values containing $a$ on $\tau^P$.
Then
$$\pi_1\big(\phi^{-1}(O)/G\big)\cong\pi_1\big(\phi^{-1}(O)/G-\phi^{-1}(G\cdot
   B)/G\big).$$
\end{lemma}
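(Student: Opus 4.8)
The plan is to iterate the deformation-and-removal machinery developed in Section~6, now adapted to the nonabelian group $G$ acting near the face $\tau$. First I would fix the small open invariant neighborhood $O$ of $c$ in $\mathfrak g^*$ so that $O\cap\mathfrak t^*_+$ meets only those faces of $\mathfrak t^*_+$ whose closure contains $\tau$, and so that the only orbit types occurring on $\phi^{-1}(O)$ are those meeting $\phi^{-1}(\tau)$; this is possible because $\phi$ is proper and the orbit-type stratification is locally finite. Shrinking $O$ also lets me assume, via Corollary~\ref{corretract}, that $\phi^{-1}(O)$ equivariantly deformation retracts onto $\phi^{-1}(G\cdot c)$, but I will not need the full strength of that here; what I need is that $\phi^{-1}(O)/G$ is a connected stratified space whose strata are the sets $\big(M_{(H)}\cap\phi^{-1}(O)\big)/G$.

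The heart of the argument is to apply Lemma~\ref{remove} (removal of a closed stratum with connected, simply connected link) repeatedly. The set $\phi^{-1}(G\cdot B)/G$ is exactly the union of the strata of $\phi^{-1}(O)/G$ lying over the non-generic faces contained in $O\cap\overline{\tau^P}$ together with the images of the non-principal orbit types; these form a closed subset, and one can order them by decreasing codimension of the corresponding face (and, within a fixed face, by the partial order on orbit types coming from the slice theorem, as recalled just before Lemma~\ref{lift}) so that at each stage the stratum being removed is closed in what remains. For each such stratum, indexed by an orbit type $(H)$ meeting $\phi^{-1}(\tau)$ with $\tau$ replaced by the relevant (possibly higher) non-generic face $\mathbf F$, I would invoke Proposition~\ref{Glink}: the local normal form $A=G\times_H(\mathfrak q^*\times S\times V)$ gives that the link of $\big(M_{(H)}\cap\phi^{-1}(\mathbf F)\big)/G$ in $\phi^{-1}(\overline U)/G$ has the form $S(\mathfrak q_1^*\times S'\times V')/H$, and the case analysis there — using Proposition~\ref{q} for the semisimple factor $G_1$ and Armstrong's theorem~\ref{arm} — shows this link is connected and simply connected, precisely because $\mathbf F\neq\tau^P$ forces $(\mathbb R^+)^{l}\times(S'\cap\psi^{-1}(\overline U))\neq 0$ or $\mathfrak q_1^*\neq 0$ with the needed fixed-point property. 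Strictly speaking Proposition~\ref{Glink} is stated for the central face $\mathcal C$, so I would note that the same computation applies verbatim with $\mathcal C$ replaced by $\mathbf F$ and $G$ replaced by the relevant cross-section group $G_{\mathbf F}$, using the cross-section theorem~\ref{cross} to reduce to that situation; this localization is routine. Applying Lemma~\ref{remove} once per stratum then yields $\pi_1\big(\phi^{-1}(O)/G\big)\cong\pi_1\big(\phi^{-1}(O)/G-\phi^{-1}(G\cdot B)/G\big)$.

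The main obstacle I expect is bookkeeping: ensuring that the strata of $\phi^{-1}(G\cdot B)/G$ really can be peeled off one at a time while keeping the ``closed stratum'' hypothesis of Lemma~\ref{remove} satisfied at every intermediate stage, and confirming that each intermediate space is still a stratified space in the precise recursive sense of Section~6 so that Lemma~\ref{remove} applies again. This is the same kind of induction carried out in \cite[proof of Theorem~1.6]{L0} and \cite{L}; the new point is only that the links are computed by Proposition~\ref{Glink} rather than its symplectic predecessor, and that the cross-section theorem is needed to handle faces other than the principal one. A secondary, more technical point is checking that after removing the over-$B$ strata the remaining space $\phi^{-1}(O)/G-\phi^{-1}(G\cdot B)/G$ is exactly $\phi^{-1}\big(\mathrm{chamber\ of\ }a\big)/G$ up to homotopy, i.e.\ that no ``generic'' stratum over $\tau^P$ has been inadvertently removed; this follows from the definition of $B$ and the fact that a tubular neighborhood of $\phi^{-1}(G\cdot c)$ in the cross-section retracts onto the cross-section so that the generic locus over $\tau^P$ is open and connected. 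With these points addressed, the displayed isomorphism follows.
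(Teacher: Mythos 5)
Your proposal follows essentially the same route as the paper: pass to the cross section of Theorem~\ref{cross}, note that the face in question becomes the central face for the cross-section group so that Proposition~\ref{Glink} applies as stated there, transport the link back by equivariance, and peel off the strata over $B$ one at a time with Lemma~\ref{remove}. The one point you should correct is the claim that the Proposition~\ref{Glink} computation applies ``verbatim'' to every non-generic face $\mathbf F$ met by $B$. For the walls lying inside the principal face $\tau^P$ this is not so: on the principal cross section only the maximal torus acts, and both the hypotheses and the proof of Proposition~\ref{Glink} (which rest on the semisimple dichotomy of Proposition~\ref{q}, e.g.\ forcing $\dim S'\geq 4$ or $\dim\mathfrak q_1^*\geq 2$) fail for a torus. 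The correct tool for those strata is the torus-case link computation, Proposition~\ref{Tlink} --- whose key inequality $(\R^+)^l\times\big(S'\cap\psi^{-1}(\overline U)\big)\neq 0$ you in fact quote --- applied to the $T$-action on the principal cross section and combined with equivariance, possibly interleaved with the local retractions of Theorem~\ref{retract} and Corollary~\ref{corretract}; this is exactly how the paper treats the remaining values on $O\cap\tau^P$. With that substitution, your stratum-by-stratum induction (ordered so that the removed stratum is closed at each stage, links being local) coincides with the paper's argument, which itself defers the remaining bookkeeping to the proof of Lemma~6.18 of \cite{L0}.
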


\begin{lemma}\label{Gisom}
Let $(M, \omega)$ be a connected presymplectic Hamiltonian $G$-manifold with a clean $G$-action and proper moment map $\phi$.  Let 
$c\in\tau\cap\phi(M)$, where $\tau\neq \tau^P$ is a face of $\mathfrak t^*_+$, $\tau^P$ being the principal face,  and let $a$ be a generic value on $\tau^P$ very near $c$. Then $\pi_1(M_c)\cong\pi_1(M_a)$.
 \end{lemma}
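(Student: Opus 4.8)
The strategy is to reproduce the torus argument of Lemma~\ref{c=a}, substituting Corollary~\ref{corretract} for Theorem~\ref{retract} and the nonabelian removal result Lemma~\ref{o-b} for the repeated use of Proposition~\ref{Tlink} and Lemma~\ref{remove}. Fix a small open $G$-invariant neighborhood $O$ of $c$ in $\mathfrak g^*$ containing $a$, chosen small enough that Corollary~\ref{corretract} applies to $\phi^{-1}(O)$ and that, with $B$ as in Lemma~\ref{o-b}, the set $(O\cap\mathfrak t^*_+)-B$ (the portion of the generic chamber of $\tau^P$ through $a$ that lies in $O$) is contractible. Since a chamber is cut out by linear inequalities, a small enough ball $O$ about $c$ achieves this, and $a$ very near $c$ lies in $O$.

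First, by Corollary~\ref{corretract} applied with $c$ in place of $a$, $\phi^{-1}(O)$ equivariantly deformation retracts onto $\phi^{-1}(G\cdot c)$; passing to $G$-quotients and using Definition~\ref{quotient},
$$\pi_1\big(\phi^{-1}(O)/G\big)\cong\pi_1\big(\phi^{-1}(G\cdot c)/G\big)=\pi_1(M_c).$$
Second, Lemma~\ref{o-b} gives
$$\pi_1\big(\phi^{-1}(O)/G\big)\cong\pi_1\big(\phi^{-1}(O)/G-\phi^{-1}(G\cdot B)/G\big).$$

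Third --- the step requiring the most care --- I would exhibit the space on the right as deformation retracting onto $M_a$. Since $O$ is $G$-invariant and every coadjoint orbit meets $\mathfrak t^*_+$ in a single point, one has $\phi^{-1}(O)/G-\phi^{-1}(G\cdot B)/G=\phi^{-1}\big(G\cdot((O\cap\mathfrak t^*_+)-B)\big)/G$, where $(O\cap\mathfrak t^*_+)-B$ is an open subset of the principal face $\tau^P$ consisting of values generic on $\tau^P$. Applying the cross section theorem (Theorem~\ref{cross}) at $\tau^P$, this quotient is identified with $\phi^{-1}\big((O\cap\mathfrak t^*_+)-B\big)\big/G_{\tau^P}$, where $G_{\tau^P}$ is the group acting on the principal cross section. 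Because all the values involved are generic on $\tau^P$, the restriction of $\phi$ there is a proper $G_{\tau^P}$-equivariant submersion onto the contractible base $(O\cap\mathfrak t^*_+)-B$, hence an equivariantly trivial fiber bundle by Ehresmann's lemma (after a mild shrinking of the base, exactly as at the end of the proof of Lemma~\ref{c=a}); its quotient therefore deformation retracts onto the quotient of a single fiber, namely $\phi^{-1}(a)/G_a=M_a$. Chaining the three identifications gives $\pi_1(M_c)\cong\pi_1(M_a)$.

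The main obstacle is this last step: one must verify that, after deleting $\phi^{-1}(G\cdot B)/G$, the leftover space is --- up to a harmless deformation retraction --- a trivial fiber bundle over a contractible base, which is precisely where the cross section theorem and the genericity of the values on $\tau^P$ enter; this is the verbatim analogue of the corresponding point in the torus case. A secondary bookkeeping nuisance is arranging a single neighborhood $O$ that is simultaneously small enough for Corollary~\ref{corretract} and for the contractibility of $(O\cap\mathfrak t^*_+)-B$, which is handled by shrinking $O$ and taking $a$ close enough to $c$.
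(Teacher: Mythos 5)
Your proposal follows essentially the same route as the paper: apply Corollary~\ref{corretract} at $c$ to get $\pi_1\big(\phi^{-1}(O)/G\big)\cong\pi_1(M_c)$, invoke Lemma~\ref{o-b} to remove $\phi^{-1}(G\cdot B)/G$, and observe that the remaining set deformation retracts onto $M_a$. The only difference is that you spell out this last retraction (via the cross section at $\tau^P$ and an equivariant Ehresmann argument over the contractible piece of the chamber), which the paper simply asserts as a fact, implicitly relying on the analogous torus-case reasoning in Lemmas~\ref{a=b} and \ref{c=a}.
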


\begin{lemma}\label{Gisom'}
Let $(M, \omega)$ be a connected presymplectic Hamiltonian $G$-manifold with a clean $G$-action and proper moment map $\phi$. Let  
$\tau^P\subset \mathfrak t^*_+\cap\phi(M)$ be the principal face. Then
$\pi_1\big(\phi^{-1}(G\cdot\tau^P)/G\big)\cong\pi_1(M_a)$ for all $a\in\tau^P$.
\end{lemma}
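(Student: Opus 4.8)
The plan is to compare the ``big" quotient $\phi^{-1}(G\cdot\tau^P)/G$ with a single reduced space $M_a$, $a\in\tau^P$, by passing to the principal cross section and reducing everything to the torus case already treated in Subsection~5.1. First I would invoke Theorem~\ref{cross}: let $U^P$ be the natural slice at the principal face $\tau^P$ and let $R^P=\phi^{-1}(U^P)=\phi^{-1}(\tau^P)$ be the principal cross section, a presymplectic submanifold on which only the maximal torus $T$ acts, with moment map $\phi|_{R^P}$ whose image lies in $\tau^P\subset\mathfrak t^*$. Since the $G$-action on $M$ is clean, the $T$-action on $R^P$ is clean (as in the proof of Corollary~\ref{corretract}), and since $\phi$ is proper, $\phi|_{R^P}$ is proper. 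By $G$-equivariance of $\phi$, the inclusion $R^P\hookrightarrow \phi^{-1}(G\cdot\tau^P)$ induces a homeomorphism $R^P/T\xrightarrow{\ \sim\ }\phi^{-1}(G\cdot\tau^P)/G$: every $G$-orbit meeting $\phi^{-1}(G\cdot\tau^P)$ meets $\phi^{-1}(\tau^P)=R^P$, and two points of $R^P$ are $G$-equivalent iff they are $T$-equivalent because the slice $U^P$ has $G_{\tau^P}\supseteq T$ acting and the off-$T$ directions are killed by the transversality. Hence $\pi_1\big(\phi^{-1}(G\cdot\tau^P)/G\big)\cong\pi_1(R^P/T)$.

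Next I would identify the reduced spaces. For $a\in\tau^P$, one has $\phi^{-1}(G\cdot a)=G\cdot\phi^{-1}(a)$ with $\phi^{-1}(a)\subset R^P$, and $G_a\supseteq T$ acts on $\phi^{-1}(a)$; again by equivariance and the slice structure, $M_a=\phi^{-1}(G\cdot a)/G=\phi^{-1}(a)/T=(R^P)_a$, the presymplectic reduced space of the Hamiltonian $T$-manifold $R^P$ at the value $a$. Thus the claim ``$\pi_1\big(\phi^{-1}(G\cdot\tau^P)/G\big)\cong\pi_1(M_a)$ for all $a\in\tau^P$'' becomes exactly the assertion $\pi_1(R^P/T)\cong\pi_1\big((R^P)_a\big)$ for all $a$ in the moment image of $R^P$.

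Finally I would apply the torus results of Subsection~5.1 to $R^P$. By Proposition~\ref{TM=a} there is some value $a_0$ with $\pi_1(R^P/T)=\pi_1\big((R^P)_{a_0}\big)$, and by Lemmas~\ref{a=b} and~\ref{c=a} the fundamental group $\pi_1\big((R^P)_a\big)$ is independent of the value $a$ (moving within a chamber by Ehresmann, and across a non-generic wall by the deform-and-remove argument using Theorem~\ref{retract} and Proposition~\ref{Tlink}). Combining, $\pi_1(R^P/T)\cong\pi_1\big((R^P)_a\big)$ for all $a$, which gives the lemma. The only genuine point requiring care — and the step I expect to be the main obstacle — is the identification $R^P/T\cong\phi^{-1}(G\cdot\tau^P)/G$ and $(R^P)_a\cong M_a$: one must check that the cross-section slice $U^P$ is saturated correctly so that passing from $G$-orbits to $T$-orbits neither identifies nor separates points incorrectly, i.e. that $G\times_T R^P\to \phi^{-1}(G\cdot\tau^P)$ is an isomorphism of stratified spaces. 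This follows from the slice property in Theorem~\ref{cross} together with $T\subseteq G_{\tau^P}$, but it is where the argument must be spelled out; the rest is bookkeeping plus citation of the torus case.
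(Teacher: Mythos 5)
Your proposal is correct and follows essentially the same route as the paper: pass to the principal cross section $R^P=\phi^{-1}(\tau^P)$, on which only the maximal torus $T$ acts, note via Theorem~\ref{cross} that this $T$-action is clean and the restricted moment map is proper (onto its image, which is all that is needed), identify $\phi^{-1}(G\cdot\tau^P)/G$ with $R^P/T$ and $M_a$ with $\phi^{-1}(a)/T$ by equivariance, and invoke the torus case of Theorem~\ref{C}, i.e.\ Lemmas~\ref{a=b}, \ref{c=a} and Proposition~\ref{TM=a}. The only difference is expository: you spell out the quotient identifications that the paper compresses into ``by the equivariance of $\phi$.''
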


Now we can finish the proof of Theorem 4(b):

\begin{proof}
Similar to the proof of Lemma~\ref{o-b}, by going to the cross sections, 
using Proposition~\ref{Glink} in the cross sections, and by equivariance of the action and of the moment map, we can inductively remove $\phi^{-1}(G\cdot \tau)/G$'s from $M/G$ for all the nonprincipal faces $\tau $'s of $\mathfrak t^*_+$  (note that the links are local). Now assume we have achieved 
that 
$$\pi_1(M/G)\cong\pi_1\Big(M/G - \bigcup_{\tau\neq\tau^P}\phi^{-1}(G\cdot \tau)/G\Big)\cong\pi_1\big(\phi^{-1}(G\cdot\tau^P)/G\big).$$
Then the theorem follows from Lemmas~\ref{Gisom} and \ref{Gisom'}.
\end{proof}

\section{an application of Theorems~\ref{A} and \ref{C} and two counter examples}

First we look at an example of Theorems~\ref{A} and \ref{C}.

\begin{example}\label{ex0}
Let $(M, \omega)$ be a compact connected symplectic manifold of dimension $2n$ equipped with a Hamiltonian $G$-action and moment map $\phi$. Choose a $G$-invariant Riemannian metric on $M$ compatible with $\omega$. Let $Y$ be the unitary frame bundle of $M$,
$\pi\colon Y\to M$,  then $(Y, \pi^*(\omega))$ is a compact connected presymplectic manifold whose leaves are the $U(n)$-orbits. The $G$-action on $M$ naturally lifts to a $G$-action on $Y$, which commutes with the $U(n)$ action. The $G\times U(n)$-action is leafwise transitive, hence clean on $Y$, with null subgroup $N\supseteq U(n)$. The action of $G\times U(n)$ is Hamiltonian on $Y$ with moment map 
$$\Phi = \phi\times 0\colon \, Y\longrightarrow \mathfrak g^*\times \mathfrak u(n)^*.$$
 Let $\mathcal O$ be an $N$-orbit, which is diffeomorphic to $U(n)$. By Theorem~\ref{A},
\begin{equation}\label{y}
\pi_1(Y)/\langle\im(\pi_1(\mathcal O))\rangle =\pi_1(Y/(G\times U(n))) = \pi_1(M/G).
\end{equation}
By Theorem~\ref{C},
$$\pi_1(Y/(G\times U(n))) = \pi_1(Y_a) \quad\mbox{for any $a\in\Phi(Y)$.}$$
If $a\in \phi(M)$, then $a\in\Phi(Y)$, and $Y_a = \Phi^{-1}\big((G\times U(n))\cdot a\big)/\big(G\times U(n)\big) = \Phi^{-1}(G\cdot a)/\big(G\times U(n)\big) = \phi^{-1}(G\cdot a)/G = M_a$. So
\begin{equation}\label{ya}
\pi_1(Y/(G\times U(n))) = \pi_1(M/G) = \pi_1(M_a) \quad\mbox{for any $a\in\phi(M)$.}
\end{equation}
On the other hand, by the long exact sequence of the $U(n)$-fibration $\pi\colon Y\to M$,
$$\cdots\to\pi_1(U(n))\to\pi_1(Y)\to\pi_1(M)\to\pi_0(U(n))=0\to\cdots,$$
we get
\begin{equation}\label{u}
\pi_1(Y)/\im(\pi_1(U(n))) = \pi_1(M).
\end{equation}
Since the $N$-orbit $\mathcal O$ is diffeomorphic to $U(n)$, we have 
\begin{equation}\label{yu} 
\pi_1(Y)/\langle\im(\pi_1(\mathcal O))\rangle = \pi_1(Y)/\im(\pi_1(U(n))).
\end{equation}
Since $M$ is a compact connected Hamiltonian $G$-manifold, by \cite{L0},
$$\pi_1(M) = \pi_1(M/G).$$
This shows the consistance of  (\ref{y}) and (\ref{u}). 
Equations (\ref{y}), (\ref{ya}), (\ref{u}) and (\ref{yu}) recover the claim for the compact symplectic manifold $M$  (\cite{L0}):
$$\pi_1(M) = \pi_1(M/G) = \pi_1(M_a)  \quad\mbox{for any $a\in\phi(M)$}.$$
\end{example}

\smallskip

Now we look at two counter examples of the theorems.

\begin{example}\label{ex1}
Let $M=S^1\times T^2$, and $\alpha=\cos t \,d\theta_1 + \sin t \,d\theta_2$,
where $t$ is the coordinate on the first factor and $(\theta_1, \theta_2)$
are the coordinates on the second factor. Then $(M, \alpha)$ is a contact
manifold, and $(M, d\alpha)$ is presymplectic.
The Reeb vector field is $R=\cos t\frac{\partial}{\partial \theta_1} + \sin t\frac{\partial}{\partial \theta_2}$.
The null foliation on $M$ is given by the orbits of the flow of $R$.

Let $T^2$ act on $M$ by acting freely on the second factor and acting trivially on the first factor. This $T^2$-action is {\it not clean}, one can check it by definition. Let us look at the moment map image. The moment map for the $T^2$-action is $\phi (t, \theta_1, \theta_2)=(\cos t, \sin t)$, $\phi(M)$ is a circle, not a convex polytope. 
For any  $a\in \phi (M)$,
$M_a = (\mbox{pt}\times T^2)/T^2 = \mbox{pt}$, so $\pi_1(M_a)=0$.
But $M/T^2 = S^1$, so $\pi_1(M/T^2)=\Z$.
\end{example}

\begin{example}\label{ex2}
Consider the contact manifold in Example~\ref{ex1}.
Let $S^1\subset T^2$ act on $M$ by acting freely on the first coordinate
of $T^2$. This $S^1$-action is {\it not clean}, checking by definition. The moment map of this $S^1$-action is $\phi (t, \theta_1, \theta_2) = \cos t$, so $\phi (M) = [-1, 1]$. We have that
$$M/S^1 = S^1\times S^1,$$
$$\phi^{-1}(0)= \mbox{2 points}\times T^2, \,\, \mbox{so}\,\, M_0 = \mbox{2 points}\times S^1, \mbox{and}$$
$$\phi^{-1}(1)=  \mbox{1 point} \times T^2, \,\, \mbox{so}\,\,  M_1 = S^1.$$
\end{example}

\end{document}